\newcommand{\Mod}[1]{\ (\mathrm{mod}\ #1)}
\newcommand{\lt}{\left}
\newcommand{\rt}{\right}
\newcommand{\bsm}{\lt(\begin{smallmatrix}} 
\newcommand{\esm}{\end{smallmatrix}\rt)}
\newtheorem{thm}{Theorem}[section]
\newtheorem{corollary}[thm]{Corollary}
\newtheorem{prop}[thm]{Proposition}
\newtheorem{lem}[thm]{Lemma}
\newtheorem{lemma}[thm]{Lemma}
\theoremstyle{remark}
\newtheorem{rem}[thm]{Remark}
\newcommand{\sumstar}{\sideset{}{^*}\sum}
\newcommand{\sumb}{\sideset{}{^\flat}\sum}
\newcommand{\sumbq}{\sideset{}{^\flat}\sum_{\chi \Mod q}}
\newcommand{\sumtwo}{\operatorname*{\sum\sum}}
\newcommand{\sumtwodee}{\operatorname*{\sideset{}{^d}\sum\sideset{}{^d}\sum}}
\newcommand{\sumtwostar}{\operatorname*{\sideset{}{^*}\sum\sideset{}{^*}\sum}}
\newcommand{\sumthree}{\operatorname*{\sum\sum\sum}}
\newcommand{\sumfour}{\operatorname*{\sum\sum\sum\sum}}
\newcommand{\sumd}{\sideset{}{^d}\sum}
\newcommand{\tRe}{\textup{Re }}
\newcommand{\tIm}{\textup{Im }}
\newcommand{\bfrac}[2]{\left(\frac{#1}{#2}\right)}
\newcommand{\al}{\boldsymbol\alpha}
\newcommand{\be}{\boldsymbol \beta}
\newcommand{\chiq}{\chi \Mod{q}}
\newcommand{\cb}{\overline{\chi}}
\newcommand{\W}{\mathcal W}
\newcommand{\D}{\mathcal D (\Psi, Q; \boldsymbol \alpha, \boldsymbol \beta)}
\newcommand{\Hc}{\mathcal H}
\newcommand{\B}{\mathcal B}
\newcommand{\R}{{\mathrm{Re}}}
\newcommand{\h}{\frac{1}{2}}
\newcommand{\phib}{\phi^\flat(q)}
\newcommand{\Wt}{\widetilde{\mathcal{W}}}
\newcommand{\psit}{\widetilde{\Psi}}
\newcommand{\Z}{\mathcal{Z}}
\newcommand{\A}{\mathcal{A}}
\newcommand{\Vcal}{\mathcal{V}}
\newcommand{\Q}{\mathcal{Q}}
\newcommand{\calS}{\mathcal{S}}
\newcommand{\tQ}{\widetilde{\mathcal{Q}}}
\newcommand{\fr}{\mathfrak{r}}
\newcommand{\m}{\mathfrak{m}}
\newcommand{\n}{\mathfrak{n}}
\newcommand{\ex}{\mathrm{e}}
\begin{document}
\title{The sixth moment of Dirichlet $L$-functions at the central point}

\author{Vorrapan Chandee}
\address{Mathematics Department, 138 Cardwell Hall Manhattan, KS 66506 USA}

\email{chandee@ksu.edu}

\author{Xiannan Li}
\address{Mathematics Department, 138 Cardwell Hall Manhattan, KS 66506 USA}
\email{xiannan@ksu.edu}

\author{Kaisa Matom\"aki}
\address{Department of Mathematics and Statistics, University of Turku, 20014 Turku, Finland}
\email{ksmato@utu.fi}

\author{Maksym Radziwi\l\l}
\address{Department of Mathematics, Lunt Hall, 2033 Sheridan Road, Evanston, IL, 60208, USA}
\email{maksym.radziwill@gmail.com}

\subjclass[2010]{Primary: 11M06, Secondary: 11M26}

\begin{abstract}
  In 1970, Huxley obtained a sharp upper bound for the sixth moment of Dirichlet $L$-functions at the central point,  averaged over primitive characters $\chi$ modulo $q$ and all moduli $q \leq Q$. In 2007, as an application of their ``asymptotic large sieve'', Conrey, Iwaniec and Soundararajan showed that when an additional short $t$-averaging is introduced into the problem, an asymptotic can be obtained. In this paper we show that this extraneous averaging can be removed, and we thus obtain an asymptotic for the original moment problem considered by Huxley.

 The main new difficulty in our work is the appearance of certain challenging ``unbalanced'' sums that arise as soon as the $t$-aspect averaging is removed.
\end{abstract}

\maketitle

\section{Introduction} 

Moments of $L$-functions have been studied for application to arithmetic objects as well as for their own interest.  Classically, the first moments  studied were those of the Riemann zeta function, which are averages of the form
$$I_k(T) := \int_0^T |\zeta(\tfrac{1}{2} + it)|^{2k}dt,
$$ where as usual $\zeta(s)$ denotes the Riemann zeta function. We refer to $I_k(T)$ as the $2k$-th moment of the Riemann $\zeta$ function. Here, asymptotic formulas were proven for $k = 1$ by Hardy and Littlewood and for $k=2$ by Ingham (see e.g.~\cite[Chapter VII]{Ti}).  Despite extensive further work, including various refinements of the result of Ingham, no such result is available for any other values of $k$.

A well known conjecture states that  $I_k(T) \sim c_k T(\log T)^{k^2}$ for constants $c_k$ depending on $k$.  The values of $c_k$ remained mysterious for general $k$ until the work of Keating and Snaith \cite{KS} which related these moments to similar statistics of random matrices, thus providing precise conjectures for $c_k$.  Based on heuristics for shifted divisor sums, Conrey and Ghosh derived a conjecture in the case $k=3$ \cite{CGh} and Conrey and Gonek derived a conjecture in the case $k=4$ \cite{CGo}.  Further conjectures including lower order terms, and for other symmetry groups are available from the work of Conrey, Farmer, Keating, Rubinstein and Snaith \cite{CFKRS} as well as from the work of Diaconu, Goldfeld and Hoffstein \cite{DGH}.  Recently, Conrey and Keating have produced an alternative method of deriving these conjectures through more arithmetic considerations (i.e. with the circle method as basis) \cite{CK}.

Moments of other families of $L$-functions have also been studied. Again, asymptotics are only available for small values of $k$, while large values appear out of reach. However in certain families it is possible to reach higher values of $k$ than for the Riemann $\zeta$-function. For example, in 1970 as an application of the large sieve, Huxley \cite{Huxley} obtained an upper bound for the sixth and eight moment of Dirichlet $L$-functions,
$$
\sum_{q \leq Q}\;\; \sumbq |L(\tfrac 12, \chi)|^{2k} \ll Q^{2} (\log Q)^{k^{2}} \ , \ k \in \{1,2,3,4\}.
$$
where the superscript $\flat$ means that we are only summing over primitive even characters\footnote{The restriction to even characters is for technical convenience, and the analogous result may be derived for odd characters using the same method.}.
This family is also unitary so one conjectures that Huxley's upper bound is sharp.

Huxley's upper bound can be easily turned into an asymptotic when $k \in \{1,2\}$, in fact these cases do not even require the additional averaging over $q$ (see the breakthrough work of \cite{Young}).
Unfortunately, Huxley's upper bound for the sixth moment has resisted attempts at being improved into an asymptotic. The closest result so far comes from the work of Conrey, Iwaniec and Soundararajan \cite{CIS} in which an asymptotic formula is obtained provided that an additional short averaging in the $t$-aspect is included, namely,
\begin{equation} \label{eq:aver}
\sum_{q \leq Q}\;\; \sumbq \int_{\mathbb{R}} |L(\tfrac 12 + it, \chi)|^{6} \phi(t) dt
\end{equation}
and $\phi$ is a fixed smooth function with rapid decay at infinity. A similar result was recently obtained by the authors in the case of the eighth moment \cite{CLMR}.

Despite being short (essentially of length $\approx 1$) the $t$-averaging in \eqref{eq:aver} is significant. It eliminates from the problem so-called unbalanced sums, that is sums of $d_{3}(n) d_{3}(m) \chi(m) \overline{\chi}(n)$ with $m$ much larger than $n$.  In our main result we are able to successfully handle the contribution of such sums. Thus we obtain an asymptotic for the sixth moment without any $t$-averaging, turning Huxley's upper bound for the sixth moment into an asymptotic.

\begin{corollary}\label{cor:main}
	As $Q \rightarrow \infty$,
	\begin{align*}
	\sum_{q\leq Q} \;\; &\sumb_{\chiq}  \left| L\big( \tfrac{1}{2} , \chi \big)\right|^6 \sim 42 a_3\sum_{q\leq Q}  \prod_{p|q} \frac{\left( 1 - \frac{1}{p}\right)^5}{\left( 1 + \frac{4}{p} + \frac{1}{p^2}\right)} \phib \frac{(\log q)^{9}}{9!},
	\end{align*}where
    \begin{align*} 
    a_3 := \prod_p \left(1-\frac{1}{p^4}\right)\left(1+\frac{4}{p} + \frac{1}{p^2}\right),
    \end{align*}
and $\phi^{\flat}(q)$ counts the number of primitive even characters with modulus $q$.
\end{corollary}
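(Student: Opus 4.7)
The plan is to apply an approximate functional equation to $|L(\tfrac12,\chi)|^6$, reducing it to a sum of shape
\[
\sum_{m,n \geq 1} \frac{d_3(m) d_3(n) \chi(m) \overline{\chi}(n)}{\sqrt{mn}} \, W\!\left(\frac{m}{M}, \frac{n}{N}\right)
\]
with smooth cutoffs supported on dyadic pieces satisfying $MN \asymp q^3$, and then to invoke orthogonality of primitive even characters $\chi \Mod{q}$ together with summation over $q \leq Q$. Orthogonality produces a diagonal contribution supported on $m = n$ and an off-diagonal contribution supported on $m \equiv \pm n \Mod{q}$ with $m \neq \pm n$, modulo the standard treatment of imprimitive-character corrections via Möbius inversion.

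For the diagonal, I would analyse $\sum_{(n,q)=1} d_3(n)^2/n$ against the smooth cutoff by Mellin inversion, using the order-nine pole at $s=1$ of the Dirichlet series $\sum d_3(n)^2 n^{-s} = \zeta(s)^9 A(s)$, where $A$ is holomorphic and non-zero at $s = 1$ with $A(1) = a_3$. The residue produces the factor $(\log q)^9/9!$ and the constant $a_3$, the numerical constant $42$ comes from the combinatorics of the approximate functional equation for the sixth moment, and the local factor $\prod_{p|q}(1 - 1/p)^5/(1 + 4/p + 1/p^2)$ arises from restricting the Euler product to $(n,q)=1$. Summing over $q \leq Q$ and incorporating $\phib$ from orthogonality yields the right-hand side of Corollary \ref{cor:main}.

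The substantial work is to show that all off-diagonal contributions are $o(Q \phib (\log Q)^9)$ after averaging over $q \leq Q$. These split according to the ratio $M/N$ into a \emph{balanced} regime $M \asymp N$ and an \emph{unbalanced} regime $M \gg N q^{\varepsilon}$ (and the symmetric range). The balanced regime can be handled along the lines of Conrey--Iwaniec--Soundararajan \cite{CIS}: Voronoi summation on both $d_3$ factors, Mellin separation of variables, and the resulting Kloosterman sums are treated via Deshouillers--Iwaniec-type spectral large-sieve inequalities, with the cancellation coming from the $q$-average.

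The hard part, and the novelty highlighted in the introduction, is the unbalanced regime, which in the $t$-averaged framework of \cite{CIS} is killed by rapid decay in $t$ but survives here. For this I would apply Voronoi summation only on the short variable, producing a dual sum of length $\asymp q^2/N$ and exposing Kloosterman sums of modulus tied to $q$; the sum over $q \leq Q$ combined with the sum over the long variable is then processed via a Kuznetsov-type spectral decomposition, aiming to extract just enough cancellation to absorb the unbalanced contribution into the error term. The principal obstacle is producing spectral bounds for Kloosterman sums that are both sharp and uniform in this genuinely asymmetric geometry --- large-sieve inequalities in the style of Deshouillers--Iwaniec must be adapted so that the $q$-average alone suffices, since no $t$-average is available to help. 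Combining the diagonal main term with the bounds on both off-diagonal regimes then yields the corollary.
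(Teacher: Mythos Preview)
Your proposal has a genuine gap in the treatment of the main term. You claim that the diagonal $m=n$ alone produces the full leading constant $42$ and that ``all off-diagonal contributions are $o(\ldots)$''. This is incorrect: the balanced off-diagonal terms contribute to the main term at the same order as the diagonal. In the paper's shifted framework (Theorem~\ref{thm:main}), the main term $\tQ(q;\al,\be)$ is a sum of $|S_6/(S_3\times S_3)|=20$ permutation terms $\Q(q;\pi(\al),\pi(\be))$; the diagonal contributes only the identity-coset term (Proposition~\ref{prop:diagonal}), while the balanced off-diagonal contributes the nine terms in which exactly one $\alpha_i$ is swapped with one $\beta_j$ (Proposition~\ref{prop:offdiagonal}), and the dual half of the approximate functional equation supplies the remaining ten. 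The constant $42$ emerges only from the limit of the full sum $\tQ$ as the shifts tend to zero; an unshifted diagonal computation of the type you describe produces a different constant (in fact $2\cdot(3/2)^9=3^9/2^8\approx 76.9$, not $42$). So your argument, as written, would not recover the correct asymptotic --- extracting the nine off-diagonal main terms is precisely the content of \S\ref{sec:balancedsum}.

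Two smaller points. First, the method of \cite{CIS} for the balanced regime is not Voronoi summation but the elementary complementary-divisor trick (rewriting $m\equiv n\pmod{q}$ as $m\equiv n\pmod{e}$ with $e=(m-n)/q$, then switching to a mirror family of moduli $e\ll M/Q$) followed by the large sieve; no Kloosterman sums or spectral theory appear there. Second, for the unbalanced regime $Q^{2-\delta_0}\leq M\leq Q^{5/2+\delta'}$ the paper does not apply Voronoi to the short variable; instead it opens $d_3(m)=\sum_{efg=m}1$, applies Poisson summation in the two longest of $e,f,g$, obtains Kloosterman sums $S(\,\cdot\,,\,\cdot\,;r)$, and then averages these over the modulus via Kuznetsov together with a refined Deshouillers--Iwaniec large sieve (Lemma~\ref{le:Klo1}), with the Kim--Sarnak bound $\theta\leq 7/64<1/7$ being essential. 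The extreme range $M>Q^{5/2+\delta'}$ is handled separately via the functional equation and Deligne's bound for hyper-Kloosterman sums.
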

To deal with the new unbalanced sums that arise we will need a variety of methods, notably the spectral theory of automorphic forms and bounds of Deligne for hyper-Kloosterman sums. This is in juxtaposition to \cite{CIS} which exploits the elementary complementary divisor trick using more classical complex analytic tools.

We also note that Corollary \ref{cor:main} is consistent with the conjectures in \cite{CFKRS}. Similarly to~\cite{CIS}, we in fact prove a more general and stronger result about the sixth moment with shifts, with a power saving error term, which we state in \S \ref{sec:shiftedmoments}.


\subsection{Outline of the proof}

The problem is roughly equivalent to obtaining an estimation (in all ranges of $N$ and $M$) of
$$
Q^{-2} \sum_{q \sim Q}\; \sum_{\psi \Mod{q}} \sum_{n \sim N} \sum_{m \sim M} \frac{d_{3}(n)\psi(n)d_{3}(m)\overline{\psi}(m)}{\sqrt{n m}}
$$
that is precise within $Q^{- \varepsilon}$ for some $\varepsilon > 0$. Thus, as is usual in moment problems, we want to slightly beat square-root cancellation in the individual $n,m$ sums by exploiting the averaging over the family. Furthermore, the functional equation allows us to restrict our attention to $N M \leq Q^{3}$. We restrict our discussion to the hardest range, namely when $M N \asymp Q^{3}$, and we also assume without loss of generality that $M > N$, and thus $M > Q^{3/2}$. 


First let us consider the range $M \leq Q^{2 - \varepsilon}$. This range is similar to the work of Conrey, Iwaniec, Soundararajan \cite{CIS}. 
Using orthogonality of characters we can think of the sum as essentially
\begin{equation} \label{eq:bigOB}
Q^{-1} \sum_{q \sim Q} \; \sumtwo_{{\substack{n \sim N, \; m \sim M \\ n \equiv m \Mod{q}}}} \frac{d_{3}(n)d_{3}(m)}{\sqrt{n m}}
\end{equation}
with various ``main terms'' subtracted.  Write $n - m = e q$, and notice that $e \asymp M / Q$ is smaller than $Q$ if $M < Q^{2 - \varepsilon}$. It is thus beneficial to re-write the congruence condition $n \equiv m \Mod{q}$ as $n \equiv m \Mod{e}$
and replace each occurence of $q$ by $(n - m) / e$.
This allows us to re-write \eqref{eq:bigOB} as
$$
Q^{-1} \sum_{e \sim M / Q} \; \sumtwo_{{\substack{n \sim N, \; m \sim M \\ n \equiv m \Mod{e}}}} \frac{d_{3}(n)d_{3}(m)}{\sqrt{n m}}.
$$
Relating back this sum to primitive characters we obtain another sequence of ``main terms'', most (but not all) of which
cancel out with the main terms subtracted from \eqref{eq:bigOB}. The remaining error term is controlled by
$$
\frac{M}{Q^{2}} \sum_{e \sim M / Q}\; \sum_{\psi \Mod{e}} \; \sum_{n \sim N} \sum_{m \sim M} \frac{d_{3}(n)\psi(n)d_{3}(m)\overline{\psi}(m)}{\sqrt{n m}}.
$$
This is a mirror-problem of the problem we started with, but in different ranges.
We do not anymore need more than square-root cancellation in sums over $m$ and $n$ and can just apply the large sieve. This leads to a bound that is $\ll M Q^{{\varepsilon}} / Q^{2}$ which is sufficient as long as $M$ is slightly smaller than $Q^{2}$.

Let us next dispose of the extreme range in which $M > Q^{5/2 + \varepsilon}.$
In this range, the functional equation converts
$$
\sum_{m \sim M} \frac{d_{3}(m)\psi(m)}{\sqrt{m}}
$$
into
$$
\varepsilon_{{\psi}}^{3} \sum_{m \sim Q^{3} / M} \frac{d_{3}(m) \overline{\psi}(m)}{\sqrt{m}},
$$
where $\varepsilon_{\psi}$ is the root-number of $L(s, \psi)$. Furthermore, upon averaging over $\psi \Mod{q}$,
we note that
\begin{equation} \label{eq:asd}
\sum_{{\psi \Mod{q}}} \varepsilon_{\psi}^{3} \cdot \psi(n)
\end{equation}
is a hyper-Kloosterman sum. Thus, for generic $q$ and $n$, \eqref{eq:asd} is bounded by $\ll Q^{1/2 + \varepsilon}$.
The remaining sums over $n \sim N, q \sim Q$ and $m \sim Q^{3} / M$ are bounded trivially, and we are left with a final
bound of (up to factors of $Q^{\varepsilon}$)
$$
\ll Q^{-2} \cdot Q \sqrt{N \cdot \frac{Q^{3}}{M}} \cdot \sqrt{Q} \ll \frac{Q^{5/2}}{M}
$$
which is sufficient if $M$ is slightly larger than $Q^{5/2}$.

Thus it remains to handle the range $Q^{2 - \varepsilon} < M < Q^{5/2 + \varepsilon}$. In this range we open up the definition of $d_{3}(m)$ and thus we aim to estimate
$$
Q^{-2} \sum_{q \sim Q} \; \sum_{\psi \Mod{q}} \; \sumfour_{\substack{n \sim N \\ e \sim E, f \sim F, g \sim G}} \frac{d_{3}(n)\overline{\psi}(n) \psi(e f g)}{\sqrt{n e f g}}
$$
with $E F G \asymp M$ and $E > F > G$.
We now apply Poisson summation on the two longest variables $e$ and $f$ to get
$$
\varepsilon_{\psi}^{2} \sumfour_{\substack{n \sim N \\ e \sim Q / E, f \sim Q / F, g \sim G}} \frac{d_{3}(n) \overline{\psi}(n e f) \psi(g)}{\sqrt{n e f g}}.
$$
Executing the sum over $\psi \Mod{q}$ converts
$$
\sum_{\psi \Mod{q}} \varepsilon_{\psi}^{2} \cdot \overline{\psi}(n e f) \psi(g)
$$
into a Kloosterman sum $S(n e f, \overline{g}; q)$. We now use Kuznetsov in $q$ to get an average over forms of level $g \asymp G$ on the spectral side. Note that
$$
\frac{\sqrt{n e f}}{q \sqrt{g}} \asymp \frac{\sqrt{N (Q / E) (Q / F)}}{Q \sqrt{G}} = \frac{\sqrt{N}}{\sqrt{E F G}} =: \frac{1}{X}
$$
and therefore the dual sum over the spectrum is morally of length $\ll 1 + 1/X \ll 1$. For simplicity, we neglect the contribution of the continuous and holomorphic spectrum and get that
$$
\sum_{q \sim Q} S(n e f, \overline{g}; q) \approx \frac{Q}{\sqrt{g}} \sum_{\substack{\phi_{j} \text{ level } g \\ \text{eigenvalue} \ll 1}} X^{2 i \kappa_{j}}\lambda_{j}(n e f),
$$
where $\tfrac 14 + t_{j}^{2} = (\tfrac 12 + i \kappa_{j}) \cdot (\tfrac 12 - i \kappa_{j})$ is the eigenvalue of the form $\phi_{j}$, and where we choose $\kappa_{j}$ so that if it is imaginary then $i \kappa_{j} > 0$. Thus we are left with estimating
\begin{equation} \label{eq:largesieve}
\frac{1}{Q} \sum_{g \sim G} \frac{1}{g} \sum_{\substack{\phi_{j} \text{ level } g \\ \text{eigenvalue} \ll 1}} X^{2 i \kappa_{j}} \sum_{n \sim N} \frac{\lambda_{j}(n) d_{3}(n)}{\sqrt{n}} \sum_{e \sim Q / E} \frac{\lambda_{j}(e)}{\sqrt{e}} \sum_{f \sim Q / F} \frac{\lambda_{j}(f)}{\sqrt{f}}.
\end{equation}
If we assumed the Ramanujan conjecture and the Lindel\"of hypothesis, the above would be $\ll G Q^{\varepsilon}/ Q$ which would be sufficient since $G \leq Q^{5/6}$ when $M \leq Q^{5/2}$.

In order to bound \eqref{eq:largesieve} unconditionally we will use a refinement of the spectral large sieve of Deshouillers-Iwaniec. Ultimately the success of our argument will crucially use that the best known bound towards the Ramanujan conjecture, due to Kim-Sarnak \cite{KimS}, gives $\kappa_j \leq \tfrac {7}{64} = \tfrac{1}{7} - \tfrac{15}{448}$ and thus strictly less than $\tfrac 17$.

Using the Cauchy-Schwarz inequality, we bound \eqref{eq:largesieve} by
\begin{align} \label{eq:tob}
  \frac{1}{Q G} \Big ( \sum_{\substack{g \sim G \\ \phi_{j} \text{ eigenvalue } \ll 1}} |Y^{2 i \kappa_{j}}| \Big | \sum_{k \asymp Q^{2} / E F} \frac{\alpha(k)}{\sqrt{k}} \lambda_{j}(k) \Big |^{2} \Big )^{1/2} \cdot
  \Big ( \sum_{\substack{g \sim G \\ \phi_{j} \text{ eigenvalue } \ll 1}} |Z^{2i\kappa_{j}}| \Big | \sum_{n \sim N} \frac{d_{3}(n)}{\sqrt{n}} \lambda_{j}(n) \Big |^{2} \Big )^{1/2}
  \end{align}
  for any choice of $Y, Z \geq 1$ such that $Y Z = X^{2} = E F G / N$ and where $\alpha(k)$ is a coefficent obtained from
  grouping the variable $e$ and $f$ together. We show that
  \eqref{eq:tob} is
  \begin{equation} \label{eq:tog}
  \ll \frac{1}{Q} \cdot \Big ( (1 + Y_{1}^{2 \theta}) \cdot (1 + Z_{1}^{2 \theta}) \cdot \Big ( G + \frac{Q^{2}}{E F} \Big ) \cdot (G + N) \Big )^{1/2},
\end{equation}
where $\theta$ is the best current bound towards the Ramanujan conjecture, and
  $$
  Y_{1} = \frac{Y Q^{2}}{Q^{2} + E F G} \ll \frac{Y Q^{2}}{E F G}
\ , \ Z_{1} = \frac{Z N}{N + G} \ , \ Y_{1}Z_{1} \ll \frac{Q^{2}}{N + G}.
$$
We can pick $Y$ and $Z$ appropriately so that $Y_{1}, Z_{1} \geq 1$. Notice that in \eqref{eq:tog} we use a large sieve bound of the form $ G (G + N) \| \boldsymbol{\alpha} \|_{2}^{2}$ instead of the conjecturally optimal $(G^{2} + N) \| \boldsymbol{\alpha} \|_{2}^{2}$. 

We now further bound \eqref{eq:tog} by
\begin{equation}
  \label{eq:intupp}
\frac{\sqrt{G} \cdot (G + N)^{1/2}}{Q} \cdot \Big ( \frac{Q^{2}}{N + G} \Big )^{\theta}.
\end{equation}

Assuming the Ramanujan conjecture, one may put $\theta = 0$, and then this bound achieves the same maximum of $Q^{-1/6}$ at $N = Q^{1/2}, E=F=G=Q^{5/6}$ and $N = Q, E=F=G=Q^{2/3}$.  This is purely coincidental; in particular note that for $G \ge N$, the bound $G(G+N) \asymp G^2$ is essentially optimal, but this is not the case when $G$ is smaller.  For $\theta>0$, the bound~\eqref{eq:intupp} is largest for $N = Q^{1/2}, G = Q^{5/6}$.  Here, we obtain a final bound that is
$$
Q^{-1/6 + (2 - 5/6) \cdot \theta}.
$$
This is $\ll Q^{-\varepsilon}$ for some $\varepsilon > 0$ provided that $\theta < \tfrac 17$. Luckily
the Kim-Sarnak \cite{KimS} bound gives $\theta \leq \tfrac {7}{64} < \tfrac 17$ and this suffices to conclude the proof.

There is one additional difficulty that we did not mention in this outline. In the
case $Q^{2} < M$ we also have to show that certain main terms, similar to the
main terms that we mentioned in the case $M < Q^{2}$, do not contribute. In the range
$Q^{2} < M < Q^{5/2}$ this requires an intricate calculation followed by an application of the large sieve inequality which also appears to be new, in this context.

\section{Shifted moments}\label{sec:shiftedmoments}

We start by recalling the basic setup: we let $\chiq$ be a primitive even Dirichlet character, and let (for $\tRe s > 1$),
$$ L(s, \chi) = \sum_{n=1}^\infty \frac{\chi(n)}{n^s} = \prod_p \left( 1 - \frac{\chi(p)}{p^s}\right)^{-1} $$
be the Dirichlet $L$-function associated to it.  Then the completed $L$-function $\Lambda(s, \chi)$ defined by
$$ \Lambda\big( \tfrac{1}{2} + s, \chi \big)  := \left(\frac{q}{\pi}\right)^{s/2} \Gamma \left(\tfrac{1}{4} + \tfrac{s}{2}\right) L\big( \tfrac{1}{2} + s, \chi\big)
$$
satisfies the functional equation
\[
\Lambda\big( \tfrac{1}{2} + s, \chi \big) = \epsilon_\chi \Lambda\big( \tfrac{1}{2} - s, \overline{\chi}\big),
\]
where $|\epsilon_\chi| = 1$.

We will mostly follow the notation in \cite{CIS}. Let $\al = (\alpha_1, \alpha_2, \alpha_3)$ and $\be = (\beta_1, \beta_2, \beta_3)$.  For convenience, we also write $\alpha_{3 + j} = \beta_j$ for $j = 1, 2, 3$. Moreover let $S_{6}$ be the permutation group on six elements. For $\pi \in S_6$, define
$$\pi(\al, \be) = (\pi(\al), \pi(\be)) = (\alpha_{\pi(1)}, ..., \alpha_{\pi(6)}),$$ where we take $\pi(\al)$ as the first three coordinates of $\pi(\al, \be)$ and $\pi(\be)$ as the last three coordinates of $\pi(\al, \be)$. 

Now let
$$ \Lambda(s, \chi; \al, \be) := \prod_{j = 1}^3 \Lambda(s+\alpha_j, \chi) \Lambda(s-\beta_j, \overline{\chi})$$
and 
$$ \Lambda(\chi, \al, \be) := \Lambda\left( \frac 12, \chi; \al, \be \right).$$

Further let
\begin{equation}
\label{eq:Gdef}
G(s, \al, \be) := \prod_{i = 1}^3 \Gamma\left(\frac{s}{2} + \frac{\alpha_i}{2}\right)\Gamma\left(\frac{s}{2}-\frac{\beta_i}{2}\right),
\end{equation}
so that
$$ \Lambda\left(\tfrac{1}{2}, \chi ; \al, \be \right) = \bfrac{q}{\pi}^{\delta(\al, \be)} G\left(\frac 12, \al, \be\right) \prod_{i=1}^3 L\left( \frac{1}{2} + \alpha_i, \chi\right)L\left(\frac{1}{2} - \beta_i,  \cb \right),$$
where
\begin{align*}
\delta(\al, \be) := \frac{1}{2} \sum_{j=1}^3 (\alpha_j - \beta_j).
\end{align*}
As usual, for $ \tRe(s)$ sufficiently large, we may write
\begin{equation}\label{eqn:Dirichletseries}
\prod_{i=1}^3 L\left( s + \alpha_i, \chi\right)L\left(s - \beta_i,  \cb \right) = \sumtwo_{m, n \geq 1} \frac{\sigma (m; \al) \sigma (n; -\be)}{m^s n^s} \chi(m) \cb(n),
\end{equation}
where the coefficients are
\begin{align*}
\sigma(m; \al)  := \sum_{m=m_1m_2m_3} m_1^{-\alpha_1}m_2^{-\alpha_2}m_3^{-\alpha_3}
\end{align*}and similarly for $\sigma(n; -\be)$.

Our final result will involve certain arithmetic factors, which we define below.
As is standard, we expect an arithmetic factor resulting from the diagonal term coming from $m=n$ in \eqref{eqn:Dirichletseries}.  Let
\begin{equation}
  \label{eqn:sumoverm} \sum_{\substack{m=1\\ (m, q) = 1}}^\infty \frac{\sigma (m; \al) \sigma (m; -\be)}{m^{2s}} = \prod_{p\nmid q} \B_p(s; \al, \be),
  \end{equation}
where 
$$ \B_p(s; \al, \be) := \sum_{r = 0}^{\infty} \frac{\sigma(p^r; \al)\sigma(p^r; -\be)}{p^{2rs}}.$$

Further, for $\zeta_p(x) = (1-p^{-x})^{-1}$, we let
\begin{align*}
\Z_p(s; \al, \be) = \prod_{i, j = 1}^3 \zeta_p(2s + \alpha_i - \beta_j), \ \ \ \ \ \textup{and} \ \ \ \ \Z(s; \al, \be) = \prod_{i, j = 1}^3 \Z_p(2s + \alpha_i - \beta_j).
\end{align*}
The sum $\B_p$  behaves similarly to $\Z_p$.  To be specific, the Euler product defined by
\begin{equation}\label{def:A}
\A(s; \al, \be) := \prod_p \B_p(s; \al, \be) \Z_p(s; \al, \be)^{-1},
\end{equation} will be absolutely convergent in a wider region. In particular, $\A(s; 0, 0)$ converges for $\tRe s > 1/4$.

Now, letting
\begin{equation}
  \label{eq:Bqdef}
\B_q(s; \al, \be) := \prod_{p|q} \B_p(s; \al, \be),
\end{equation}
we define
\begin{equation} \label{def:Qalbe}
\Q(q; \al, \be) = \bfrac{q}{\pi}^{\delta(\al, \be)} G\left( \frac 12; \al, \be\right) \frac{\A\left( \frac 12; \al, \be\right)\Z\left( \frac 12; \al, \be\right)}{\B_q\left( \frac 12; \al, \be\right)} ,
\end{equation} which corresponds to the diagonal contribution $m=n$.  We expect our final result to be symmetric under the action of $S_6$, while $\Q(q; \al, \be)$ is only symmetric under the action of $S_3 \times S_3$.  This motivates the definition of the symmetric version
\begin{equation*} 
\tQ(q; \al, \be) = \sum_{\pi \in S_6/(S_3 \times S_3)} \Q(q; \pi(\al), \pi(\be)).
\end{equation*}

The standard conjecture (see~\cite{CFKRS}) is that whenever the shifts are small, then, for any given $\varepsilon > 0$, 
\begin{align*}
\sumb_{\chi \Mod q} \Lambda(\chi; \al, \be) = \phib \tQ(q; \al, \be) (1+O_{\varepsilon}(q^{-1/2+\varepsilon})).
\end{align*}

We prove a version of this conjecture, with an additional average over $q$.  Specifically, we show the following theorem.

\begin{thm}\label{thm:main}
Let $Q\ge 3$, and let $\al, \be$ $3$-tuples satisfying $\alpha_i, \beta_i \ll \frac{1}{\log Q}$ and such that $\alpha_i \neq \beta_j$ for all $1 \leq i,  j \leq 3$. Then we have, for any smooth function $\Psi$ supported on $[1, 2]$,
\begin{align*}
\sum_q \Psi\bfrac{q}{Q} \sumbq \Lambda(\chi; \al, \be) =  \sum_q \Psi\bfrac{q}{Q} \phib \tQ(q; \al, \be) + O(Q^{2-\frac{11}{1196} + \varepsilon}).
\end{align*}
\end{thm}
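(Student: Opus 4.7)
The plan is to begin with an approximate functional equation for $\Lambda(\chi;\al,\be)$, writing it as a sum of shifted divisor correlations of the form $\sum_{m,n} \sigma(m;\al)\sigma(n;-\be)\chi(m)\overline\chi(n) V(m,n)/\sqrt{mn}$ together with a complementary dual sum obtained via the functional equation by swapping $\al \leftrightarrow \be$. After dyadic localisation to $m\sim M$ and $n\sim N$, matters reduce to $MN \leq Q^{3+\varepsilon}$; by symmetry we may assume $M\geq N$, so in particular $M \geq Q^{3/2}$. Throughout, the target is to save a factor $Q^{-\delta}$ beyond the trivial bound on each dyadic block.

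I would then split the analysis into three ranges of $M$, each requiring different machinery. In the easy range $M \leq Q^{2-\varepsilon}$ I would follow the complementary divisor approach of Conrey--Iwaniec--Soundararajan: orthogonality of primitive characters produces a congruence $n \equiv m \pmod q$, and writing $n-m = eq$ one has $e \asymp M/Q < Q^{1-\varepsilon}$, so the roles of $e$ and $q$ can be swapped. After extracting the resulting diagonal contributions and checking that most of them cancel against $\tQ(q;\al,\be)$, the residual is a ``dual'' character sum of shorter modulus $e$, bounded by the classical multiplicative large sieve. In the opposite range $M \geq Q^{5/2+\varepsilon}$ I would apply the functional equation to the $m$-sum, reducing its length to $Q^3/M \leq Q^{1/2-\varepsilon}$ at the cost of $\epsilon_\chi^3$; the character sum $\sum_{\chi \pmod q} \epsilon_\chi^3 \chi(a)$ is a hyper-Kloosterman sum, so Deligne's bound together with trivial estimates on the remaining sums closes this range.

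The substantive range is $Q^{2-\varepsilon} < M < Q^{5/2+\varepsilon}$. Here I would open $\sigma(m;\be)$ as a triple convolution with dyadic pieces $e\sim E \geq f\sim F \geq g\sim G$ satisfying $EFG \asymp M$, and then apply Poisson summation in the two longest variables $e$ and $f$. This produces dual variables of lengths $Q/E$ and $Q/F$ together with a factor $\epsilon_\chi^2$; executing the sum over $\chi \pmod q$ collapses to a Kloosterman sum $S(nef, \overline g; q)$. Opening this by the Kuznetsov trace formula in the modulus $q$ recasts the problem as estimating a spectral average over Maass and holomorphic forms of level $g \sim G$, of the shape displayed in \eqref{eq:largesieve}.

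The main obstacle is bounding \eqref{eq:largesieve}. The plan is to separate $n$ from $(e,f)$ by Cauchy--Schwarz after grouping $e,f$ into a single variable $k$ of length $\asymp Q^2/(EF)$, then to apply a refinement of the Deshouillers--Iwaniec spectral large sieve to each factor, paying careful attention to the oscillatory factor $X^{2i\kappa_j}$ which, through the non-trivial exponent $\theta$ towards Ramanujan, effectively shortens the relevant spectral range and introduces the factors $(1+Y_1^{2\theta})(1+Z_1^{2\theta})$ in \eqref{eq:tog}. Optimising the split $YZ = X^2$ and the parameters $E,F,G,N$ subject to $EFG \asymp M$ and $M \leq Q^{5/2}$ leads to an overall bound of the form $Q^{-1/6 + (2-5/6)\theta + \varepsilon}$, which is a power saving precisely when $\theta < 1/7$. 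The Kim--Sarnak bound $\theta \leq 7/64$ is strictly less than $1/7$ and so just suffices; tracking the numerics yields the exponent $11/1196$. A final technical subtlety is that when $Q^2 < M < Q^{5/2}$ the Poisson step produces spurious ``main-term-like'' contributions which must either cancel against $\tQ(q;\al,\be)$ or be absorbed into the error, a matching that requires an intricate calculation supplemented by a further application of the large sieve.
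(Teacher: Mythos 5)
Your proposal is correct and follows essentially the same strategy as the paper: approximate functional equation plus dyadic localisation, the CIS complementary divisor argument for balanced $M \leq Q^{2-\varepsilon}$, Deligne's hyper-Kloosterman bound for $M \geq Q^{5/2+\varepsilon}$, and Poisson summation in the two longest variables followed by Kuznetsov and a refined Deshouillers--Iwaniec spectral large sieve (exploiting Kim--Sarnak's $\theta \leq 7/64 < 1/7$) for the middle range, with the extra main-term bookkeeping when $Q^2 < M$. (One small slip in your narrative: $M \geq N$ and $MN \leq Q^{3+\varepsilon}$ alone do not force $M \geq Q^{3/2}$; that lower bound only holds in the hardest range $MN \asymp Q^3$, though your subsequent trichotomy of $M$-ranges does cover all cases.)
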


\begin{rem}
The main term on the right hand side is of size $Q^2 (\log Q)^9$ so that we save a power of $11/1196-\varepsilon$ in the error term. We have not tried to optimize this saving, see for example Remark~\ref{rem:triviImpr}. 
\end{rem}

Corollary \ref{cor:main} quickly follows from Theorem \ref{thm:main} by letting the shifts $\alpha_i, \beta_i$ tend to $0$ (for details of a similar derivation, see~\cite{CFKRS}).

\section*{Notations and assumptions}

We shall throughout the paper assume the set-up of Theorem~\ref{thm:main}. In particular $Q\ge 3$, $\al, \be$ are $3$-tuples satisfying $\alpha_i, \beta_i \ll \frac{1}{\log Q}$ with $\alpha_i \neq \beta_j$ for all $1 \leq i,j \leq 3$ and $\Psi$ is a smooth function supported on $[1, 2]$. We will also denote by 
$$
\sumb_{\chi \Mod q}
$$
a sum over primitive even characters modulo $q$, and by 
$$
\sumd_{M, N}
$$
a sum over $M$ and $N$ running over positive powers of two. Finally given a smooth function $v$, we will denote by 
\begin{equation} \label{def:MellinV}
\widetilde{v}(s) := \int_{0}^{\infty} v(x) x^{s - 1} dx
\end{equation}
the Mellin transform of $v$. We denote by 
$$
\widehat{V}(x) := \int_{-\infty}^{\infty} V(\xi) \ex(- x \xi) d \xi
$$
the Fourier transform of $V$, where $\ex(x) = e^{2\pi i x}.$ We will also set $\mathbb{N} = \{1, 2, \ldots \}$.

Throughout the paper, $\varepsilon$ denotes a small positive real number. Moreover, $\delta_0$ and $\delta'$ are fixed positive constants to be chosen later. 

\section{Preliminary setup}
\subsection{Standard lemmas}
Here we state some standard results from the literature.  Let 
\begin{align*}
H(s; \al, \be) := \prod_{i, j=1}^3 \left(s^2 - \bfrac{\alpha_i-\beta_j}{2}^2\right)^3,
\end{align*}
and for $\xi, \eta, \mu >0$,
\begin{equation} \label{eqn:Walbe}
W_{\al, \be}(\xi, \eta; \mu) := \bfrac{\mu}{\pi}^{\delta(\al, \be)}\frac{1}{2\pi i} \int_{(1)} G \left(\frac 12 + s; \al, \be\right) H(s; \al, \be) \left( \frac{\xi\eta \pi^3}{\mu^3} \right)^{-s} \frac{ds}{s}.
\end{equation}
Finally, let
\begin{align*}
\Lambda_0(\chi; \al, \be) = \sumtwo_{m, n \geq 1} \frac{\sigma(m; \al) \sigma(n; -\be) \chi(m) \cb(n)}{\sqrt{mn}} W_{\al, \be}\left(m, n; q\right).
\end{align*}

The following lemma (see~\cite[Proposition 1]{CIS}) gives the approximate function equation for $\Lambda(\chi; \al, \be)$.
\begin{lem}\label{lem:approxfunc}
With notation as above,
\begin{align*}
H(0; \al, \be) \Lambda(\chi; \al, \be) = \Lambda_0(\chi; \al, \be) + \Lambda_0(\chi; \be, \al).
\end{align*}
\end{lem}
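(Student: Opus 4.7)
The plan is to prove this approximate functional equation by a standard contour shift combined with the functional equation for the completed $L$-function.

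First I would verify the Mellin identity
\[
\Lambda_0(\chi; \al, \be) = \frac{1}{2\pi i}\int_{(1)} \Lambda(\tfrac 12 + s, \chi; \al, \be)\, H(s;\al,\be)\,\frac{ds}{s}.
\]
To see this, substitute the definition of the completed $L$-function to write
\[
\Lambda(\tfrac 12 + s, \chi; \al, \be) = (q/\pi)^{3s + \delta(\al,\be)}\, G(\tfrac 12 + s; \al, \be) \prod_{i=1}^{3} L(\tfrac 12 + s + \alpha_i, \chi)\, L(\tfrac 12 + s - \beta_i, \cb),
\]
and on the line $\tRe s = 1$ expand the product of six $L$-functions as the absolutely convergent Dirichlet series \eqref{eqn:Dirichletseries}. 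Swapping sum and integral and matching the exponent $(mn\pi^3/q^3)^{-s}$ shows that the inner contour integral in each term is precisely $W_{\al,\be}(m,n;q)$, yielding the claimed identity.

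Next I would shift the contour from $\tRe s = 1$ to $\tRe s = -1$. Since $\chi$ is primitive and non-trivial, each $L(s, \chi)$ is entire, so $\Lambda(\tfrac 12 + s, \chi; \al, \be)$ is entire, and neither $G$ nor the polynomial $H$ contributes poles in this strip. Hence the unique singularity of the integrand is the simple pole at $s = 0$ from $1/s$, with residue $H(0;\al,\be)\,\Lambda(\chi;\al,\be)$. The horizontal-segment contributions vanish in the limit thanks to Stirling decay of $G$, convexity bounds on $L$ in the strip, and polynomial growth of $H$.

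In the remaining integral on $\tRe s = -1$ I would substitute $s \mapsto -s$ and invoke the twisted functional equation
\[
\Lambda(\tfrac 12 + s, \chi; \al, \be) = \Lambda(\tfrac 12 - s, \chi; \be, \al),
\]
which follows by applying $\Lambda(\tfrac 12 + w, \chi) = \epsilon_\chi \Lambda(\tfrac 12 - w, \cb)$ to each of the six factors and observing $(\epsilon_\chi \epsilon_{\cb})^{3} = 1$. Combined with the obvious symmetries $H(-s;\al,\be) = H(s;\al,\be) = H(s;\be,\al)$, this transforms the $\tRe s = -1$ integral into $-\Lambda_0(\chi; \be, \al)$ by reapplying the Mellin identity from the first step with $\al$ and $\be$ exchanged. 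Rearranging gives $H(0;\al,\be)\Lambda(\chi;\al,\be) = \Lambda_0(\chi;\al,\be) + \Lambda_0(\chi;\be,\al)$. This is a standard argument and I do not anticipate a real obstacle; the care lies only in (a) tracking the $(q/\pi)$ exponent to recognize $W_{\al,\be}(m,n;q)$, and (b) verifying the twisted functional equation and the symmetries of $H$, both of which are routine.
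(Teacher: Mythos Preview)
Your argument is correct and is exactly the standard contour-shift proof of the approximate functional equation; the paper does not supply its own proof but simply cites \cite[Proposition~1]{CIS}, whose proof proceeds in the same way. One minor remark: your sentence ``each $L(s,\chi)$ is entire, so $\Lambda(\tfrac12+s,\chi;\al,\be)$ is entire, and neither $G$ nor the polynomial $H$ contributes poles'' slightly conflates two viewpoints---once you have observed that each completed factor $\Lambda(\cdot,\chi)$ is entire (the Gamma poles being cancelled by the trivial zeros of $L$), the function $G$ is already absorbed and there is nothing further to check---but the conclusion is of course right.
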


We will also find it convenient to have the following bound for $W_{\al, \be}$.
\begin{lemma} \label{lem:weightW} Let $W_{\al, \be}$ be defined in \eqref{eqn:Walbe}. For any non-negative integers $\ell_1, \ell_2, \ell_3$ and $\xi, \eta, \mu$,  
\begin{align*} \frac{d^{\ell_1}}{d\xi}\frac{d^{\ell_2}}{d\eta} \frac{d^{\ell_3}}{d\mu}W_{\al, \be}(\xi, \eta; \mu) \ll_{\ell_1, \ell_2, \ell_3} \frac{1}{\xi^{\ell_1} \eta^{\ell_2} \mu^{\ell_3}}\left( \frac{\mu}{\pi}\right)^{\R \, \delta(\al, \be)}\exp \left(-c_0 \left( \frac{\xi \eta}{\mu^3}\right)^{1/3} \right)
\end{align*}
for some constant $c_0>0$. 
\end{lemma}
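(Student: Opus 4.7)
The plan is to differentiate under the integral sign in \eqref{eqn:Walbe} and then bound the resulting Mellin-Barnes integral by standard contour-shifting arguments using Stirling's formula. Differentiating $(\xi\eta\pi^3/\mu^3)^{-s}$ in $\xi$ or $\eta$ produces polynomial factors in $s$ of degrees $\ell_1$ and $\ell_2$ respectively (divided by $\xi^{\ell_1}$ and $\eta^{\ell_2}$), while differentiating in $\mu$ contributes from both the prefactor $(\mu/\pi)^{\delta(\al,\be)}$ and the factor $\mu^{3s}$, yielding overall a polynomial of degree $\ell_3$ in $s$ (with coefficients also polynomial in the bounded quantity $\delta(\al,\be)$), divided by $\mu^{\ell_3}$. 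After pulling out $(\mu/\pi)^{\R\,\delta(\al,\be)}$ in absolute value together with the factor $\xi^{-\ell_1}\eta^{-\ell_2}\mu^{-\ell_3}$, the claim reduces, with $X := \xi\eta\pi^3/\mu^3$, to showing
$$
J(X) := \frac{1}{2\pi i}\int_{(1)} G\bigl(\tfrac{1}{2} + s; \al, \be\bigr)\, H(s; \al, \be)\, P(s)\, X^{-s}\, \frac{ds}{s} \ll \exp\bigl(-c_0 X^{1/3}\bigr)
$$
uniformly in $X > 0$, where $P$ is a polynomial of degree $\leq \ell_1+\ell_2+\ell_3$ whose coefficients depend boundedly on the $\ell_i$ and on $\al,\be$.

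The argument then splits into two regimes. For $X \ll 1$ it is enough to show $J(X) \ll 1$, which follows by staying on the line $\R(s)=1$: Stirling applied to the six Gamma factors in $G(\tfrac{3}{2} + it;\al,\be)$ gives a joint decay of order $e^{-3\pi|t|/2}$ (times a power of $|t|$), which absorbs the polynomial growth of $H(1+it)\,P(1+it)$ and the factor $1/|1+it|$, making the integral absolutely convergent with a bound of size $O(1)$.

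For $X \gg 1$, I would shift the contour to $\R(s)=T$ for a parameter $T \geq 1$ to be optimized. No poles are crossed, since $G(\tfrac{1}{2}+s)$ is holomorphic in the right half-plane while $H$ and $P$ are entire. A version of Stirling uniform in both $T$ and $t$ yields $|G(\tfrac{1}{2}+T+it;\al,\be)| \ll T^{3T+O(1)}\, e^{-3T}\, e^{-3\pi|t|/2}$ in the range $|t| \leq T$, with a similar (in fact faster-decaying) bound for $|t| > T$. Integrating in $t$ and absorbing $H$, $P$ into a $T^{O(1)}$ polynomial factor, one finds $|J(X)| \ll T^{3T + O(1)}\, e^{-3T}\, X^{-T}$. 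The ratio $T^{3T}/X^{T}$ is minimized at $T \asymp X^{1/3}$, and the choice $T = X^{1/3}/e$ produces the claimed decay $\exp(-c_0 X^{1/3})$ for any fixed $c_0 < 6/e$.

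The only mildly delicate point will be controlling Stirling uniformly in the two-variable regime where both $T$ and $|t|$ may be large; this follows from the full asymptotic expansion of $\log\Gamma$ and is entirely standard. Otherwise the argument requires no new ideas.
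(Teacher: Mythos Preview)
Your proposal is correct and follows essentially the same approach as the paper's proof: differentiate under the integral, then shift the contour in \eqref{eqn:Walbe} to $\Re s \asymp (\xi\eta/\mu^3)^{1/3}$ and estimate via Stirling's formula. The paper states this in one sentence (referring to \cite[Lemma~1]{CIS}); your version simply spells out the details, including the harmless case split $X\ll 1$ versus $X\gg 1$.
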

\begin{proof} The proof follows closely the proof of Lemma 1 in \cite{CIS}. In particular, we take derivatives and move the contour of integration in the definition of $W_{\al, \be}$ to the line $\Re s = \left ( \frac{\xi \eta}{\mu^3}\right )^{1/3}$. We obtain the bound by the Stirling's formula for the Gamma function. 
\end{proof}

We also need the following standard orthogonality relation for primitive even characters (see e.g.~\cite[Lemma 2]{CIS}). There and later we write, for $b, c \in \mathbb{Z}$, $\sum_{a \mid (b \pm c)} = \sum_{a \mid (b+c)} + \sum_{a \mid (b-c)}$.
\begin{lemma} \label{lem:orthogonal} Let $q \in \mathbb{N}$. If $m, n$ are integers with $(mn, q) = 1$ then
$$ \sumb_{\chiq} \chi(m) \cb(n) = \frac{1}{2} \sum_{\substack{q = dr \\ r | (m \pm n)}} \mu(d) \phi(r).$$
\end{lemma}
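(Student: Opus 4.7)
The plan is to derive this from the standard orthogonality relation for the full group of characters modulo $q$ by two successive Möbius/parity inversions: one to cut down from all characters to primitive characters, and a second to isolate the even primitive characters.

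First I would recall the standard identity
\[
\sum_{\chi \Mod q} \chi(m)\cb(n) \;=\; \phi(q) \cdot \mathbb{1}\big[m \equiv n \Mod{q}\big]
\]
valid when $(mn, q) = 1$. Every Dirichlet character $\chi \Mod q$ is induced by a unique primitive character $\chi^{*} \Mod r$ with $r \mid q$, and since $(mn,q)=1$ forces $(mn,r)=1$, we have $\chi(m)\cb(n) = \chi^{*}(m)\overline{\chi^{*}}(n)$. Grouping the characters by their conductor gives
\[
\phi(q)\cdot \mathbb{1}\big[m \equiv n \Mod{q}\big] \;=\; \sum_{r \mid q} \sumb_{\chi \Mod r} \chi(m)\cb(n),
\]
and Möbius inversion (applied to the arithmetic function $q \mapsto \sum^\flat_{\chi \Mod q}\chi(m)\cb(n)$) then yields
\[
\sumb_{\chi \Mod q} \chi(m)\cb(n) \;=\; \sum_{q = dr} \mu(d)\,\phi(r) \cdot \mathbb{1}\big[r \mid (m-n)\big].
\]

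Next I would separate the even primitive characters via the standard parity trick. Observing that $\chi(-1)=1$ for even $\chi$ and $\chi(-1)=-1$ for odd $\chi$, we have
\[
\sumb_{\chi \Mod q,\; \chi(-1)=1} \chi(m)\cb(n) \;=\; \tfrac{1}{2} \sumb_{\chi \Mod q} \bigl(\chi(m) + \chi(-m)\bigr)\cb(n).
\]
Applying the identity just derived to each of the two inner sums (the second with $m$ replaced by $-m$) produces indicator functions $\mathbb{1}[r \mid (m-n)]$ and $\mathbb{1}[r \mid (-m-n)] = \mathbb{1}[r \mid (m+n)]$, which combine precisely into the $r \mid (m \pm n)$ notation defined in the statement. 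This gives
\[
\sumb_{\chi \Mod q} \chi(m)\cb(n) \;=\; \frac{1}{2} \sum_{\substack{q = dr \\ r \mid (m \pm n)}} \mu(d)\,\phi(r),
\]
where the left-hand sum, per the paper's convention, is already over even primitive characters.

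There is essentially no obstacle here; the only thing to be careful about is the coprimality bookkeeping (that $(mn,q)=1$ passes to $(mn,r)=1$ for all $r \mid q$, which is immediate) so that the induced-vs-primitive character values agree on $m$ and $n$. This proof is standard and appears in this form in \cite[Lemma 2]{CIS}, whose statement the lemma quotes verbatim.
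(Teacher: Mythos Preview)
Your argument is correct and is exactly the standard proof behind the cited reference \cite[Lemma~2]{CIS}; the paper itself does not give a proof but merely quotes that result. One small notational point: in the paper the symbol $\sumb$ already denotes the sum over \emph{even} primitive characters, whereas in your intermediate steps you are using it for all primitive characters before restricting by parity---it would be cleaner to write the intermediate sums with a different decoration (say $\sumstar$) to avoid confusion.
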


Furthermore, the following bound will be helpful in studying the range $m, n \leq Q^{2 - \delta_0}.$

\begin{lemma}
\label{le:zeta6th}
Let $T \geq 3$. Then
$$
\int_{-T}^T |\zeta(1/2+c+it)|^6 dt \ll T^{5/4 + \varepsilon} 
$$
for any $c \geq 0$.
\end{lemma}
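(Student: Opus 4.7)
The strategy is to interpolate between the classical fourth moment and Heath-Brown's twelfth moment bound for $\zeta$ via H\"older's inequality, and to handle the shift $c$ by a routine convexity argument.

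First, I would reduce to the case $c = 0$. For $c \ge 1$ we have $|\zeta(1/2+c+it)| \le \zeta(3/2) \ll 1$, so the claimed bound is immediate. For $0 \le c \le 1$, I would use that $\log \int_{-T}^{T}|\zeta(\sigma+it)|^{p}\,dt$ is a convex function of $\sigma$ in $[1/2, 3/2]$ (this follows, e.g., from Hadamard's three-lines theorem applied to a polynomial multiple of $\zeta$ killing the pole at $s=1$, together with an $L^p$-version by Riesz--Thorin). Hence the sixth moment at $\sigma = 1/2 + c$ is dominated by the worse of the moments at $\sigma = 1/2$ and $\sigma = 3/2$, the latter being $\ll T$ trivially.

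Second, on the critical line, I would invoke Ingham's classical fourth moment estimate
\[
\int_{-T}^{T} |\zeta(\tfrac{1}{2}+it)|^{4}\,dt \ll T (\log T)^{4},
\]
together with Heath-Brown's twelfth moment bound
\[
\int_{-T}^{T} |\zeta(\tfrac{1}{2}+it)|^{12}\,dt \ll T^{2+\varepsilon}.
\]
Writing $|\zeta|^{6} = |\zeta|^{3} \cdot |\zeta|^{3}$ and applying H\"older's inequality with exponents $p = 4/3$ and $p' = 4$ (so that $3p = 4$ and $3p' = 12$) then gives
\[
\int_{-T}^{T} |\zeta(\tfrac{1}{2}+it)|^{6}\,dt \le \Big(\int_{-T}^{T} |\zeta|^{4}\,dt\Big)^{3/4}\Big(\int_{-T}^{T} |\zeta|^{12}\,dt\Big)^{1/4} \ll T^{3/4+\varepsilon}\cdot T^{1/2+\varepsilon} = T^{5/4+\varepsilon},
\]
which is the desired bound.

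The only nontrivial ingredient here is Heath-Brown's twelfth moment estimate; given that as a black box, there is no genuine obstacle, since the H\"older interpolation is a one-line calculation and the shift $c \ge 0$ is disposed of cleanly by convexity.
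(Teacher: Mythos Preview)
Your core argument---H\"older between the fourth and twelfth moments, giving $(\int|\zeta|^4)^{3/4}(\int|\zeta|^{12})^{1/4}\ll T^{3/4+\varepsilon}\cdot T^{1/2+\varepsilon}$---is exactly the paper's proof. The only difference is in handling the shift: the paper simply cites the fourth and twelfth moment bounds as holding for $\sigma=1/2+c$ with $c\ge 0$ (a standard extension from the critical-line case), whereas you first reduce to $c=0$ by a convexity argument---note that Hadamard/Riesz--Thorin does not directly give convexity of $\log\int_{-T}^T|\zeta(\sigma+it)|^p\,dt$ over a \emph{finite} $t$-interval, so one would need Gabriel's inequality or an equivalent device there, but this is a minor technical point and the reduction is in any case unnecessary.
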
 
\begin{proof} By H\"older's inequality,
\[
\int_{-T}^T |\zeta(1/2 + c +it)|^6 dt \ll \left(\int_{-T}^T |\zeta(1/2 + c + it)|^4 dt\right)^{3/4} \left(\int_{-T}^T |\zeta(1/2 + c + it)|^{12} dt\right)^{1/4} .\\
\]
The lemma follows from the upper bounds for the fourth and twelfth power moments of the Riemann zeta function:
$$\int_{-T}^T |\zeta(1/2 + c + it)|^4 dt \ll T^{1 + \epsilon}, \qquad \int_{-T}^T |\zeta(1/2 + c + it)|^{12} dt \ll T^{2 + \epsilon} $$
(see e.g.~\cite[formula (7.6.3)]{Ti} for the fourth moment and see~\cite{H-B} for the twelfth moment).

\end{proof}

\subsection{Dissection}
Let us now turn to the first steps in the proof of Theorem~\ref{thm:main}. We start by applying Lemma~\ref{lem:approxfunc} to
\[
\sum_q \Psi\bfrac{q}{Q} \sumbq \Lambda(\chi; \al, \be).
\]
Due to the symmetry of $\al, \be$, it is sufficient to consider the contribution from $\Lambda_0(\chi; \al, \be)$. 
Hence we would like to evaluate
\begin{align*}
\sum_{q} \Psi\bfrac{q}{Q} \sumbq \Lambda_0(\chi; \al, \be)  ,
\end{align*}
where we recall that $\Psi$ is a smooth function supported on $[1, 2]$.
We now extract diagonal terms and introduce smooth partitions of unity.  Let
\begin{equation}
  \label{def:D}
  \D := \sum_q \Psi\bfrac{q}{Q}  \phi^\flat(q) \sum_{(m, q) = 1} \frac{\sigma(m; \al) \sigma(m; -\be) }{m} W_{\al, \be}\left(m, m ; q\right),
  \end{equation}
and write
\begin{align}
\label{eq:extractDiagonal}
\sum_{q} \Psi\bfrac{q}{Q} \sumbq \Lambda_0(\chi; \al, \be) = \D + \sum_{q} \Psi\bfrac{q}{Q} \sumbq \sumd_{M, N} S(M, N),
\end{align}
where $\sumd_{M, N}$ denotes a sum over powers of two and where
\begin{equation} \label{def:SMN1}
S(M, N) := \sumtwo_{\substack{m, n\\ m\neq n}} \frac{\sigma(m; \al) \sigma(n; -\be) \chi(m) \cb(n)}{\sqrt{mn}} W_{\al, \be}\left(m, n ; q\right) V\bfrac{m}{M} V\bfrac{n}{N},
\end{equation}
with $V$ a smooth function supported on $[1/2, 5/2]$ satisfying
$$\sumd_M V\bfrac{m}{M} = 1
$$for all $m\ge 1$. Note that we can always remove and add back terms with $mn\gg Q^{3+\varepsilon}$ with negligible error by using the rapid decay of $W_{\al, \be}\left(m, n; q\right)$ (see Lemma \ref{lem:weightW}).

Let $\widetilde V(s)$ be a Mellin transform of $V(s)$, defined as in \eqref{def:MellinV}.
 Since $V(x)$ is smooth and compactly supported away from zero, the Mellin transform $\widetilde{V}$ is entire and decays rapidly along the vertical axis. 

We now split our analysis into two main cases. 
\subsubsection{Balanced sums}  The first case is the balanced sums where $M$ and $N$ are not too far apart, more precisely the case $\max(M, N) \leq Q^{2 - \delta_0},$ where $\delta_0$ is a fixed real positive number to be chosen later. Let
\begin{equation}
\label{eq:BSdef} \mathcal{BS}(\Psi, Q; \al, \be)  := \sum_{q} \Psi\bfrac{q}{Q} \sumbq \sumd_{\substack{M, N \\ \max(M, N) \le Q^{2-\delta_0}}} S(M, N),
\end{equation}
with $S(M, N)$ be defined as in \eqref{def:SMN1}.  We will prove the following proposition.

\begin{prop}\label{prop:offdiagonal} Let $\varepsilon, \delta_0 > 0$. Then
\begin{align*}
\begin{aligned}
\mathcal{BS}(\Psi, Q; \al, \be) 
&= H(0; \al, \be) \sum_q \Psi \left( \frac{q}{Q}\right) \phi^{\flat}(q) \sum_{\substack{\pi \in S_6/S_3 \times S_3 \\ \pi \ \textrm{permutes exactly } \\ \textrm{one $\alpha_i$ and $\beta_j$}} }\mathcal Q(q; \pi(\al), \pi(\be)) \\
& \hskip 2in + O\left(Q^{2+\varepsilon} \left( Q^{-\frac{\delta_0}2} + Q^{- \frac{1}{4} + \frac{3\delta_0}{4}}\right)\right).
\end{aligned}
\end{align*}
\end{prop}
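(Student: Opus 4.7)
The plan is to follow the strategy of \cite{CIS} adapted to the non-$t$-averaged setting, exploiting the fact that in the balanced range $\max(M,N) \le Q^{2-\delta_0}$ the complementary divisor is small. Concretely, I would first apply Lemma~\ref{lem:orthogonal} inside $S(M,N)$, writing $q = dr$ with $r \mid m\pm n$. Since $m \ne n$ and both are $\ll Q^{2-\delta_0}$ while $q \asymp Q$, the quotient $f := (m\pm n)/r$ satisfies $|f| \ll (M+N)/r \ll d \cdot Q^{1-\delta_0}$, i.e.\ $f$ is much smaller than the original modulus. I then substitute $f$ as a new variable in place of one of $m, n$ and sum over $(d, r, f)$ together with the remaining one of $m,n$.

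Second, I would open $\sigma(m; \al)\sigma(n; -\be)/\sqrt{mn}$ via the Dirichlet series \eqref{eqn:Dirichletseries} (realized as a contour integral using the Mellin representation \eqref{eqn:Walbe} of $W_{\al,\be}$), and likewise use Mellin inversion on $V(m/M)V(n/N)$. After interchanging integration and summation, the inner sums over the undetermined $m$ or $n$ and over $f$ become sums of the shape $\sum_m m^{-1/2-\alpha_i-s}$ etc., and evaluate to products of Riemann zeta values, producing an explicit meromorphic expression in the shift parameters and the Mellin variables $s_1,\ldots,s_6$.

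Third, I would shift the $s_j$-contours to the left. The poles crossed are those of $H(s;\al,\be)$ and of $\Z$, located at $s=(\beta_j-\alpha_i)/2$ for various $i, j$. The residues from poles at $s=(\beta_j-\alpha_i)/2$ factor through the Euler products defining $\A$ and $\B_q$ and reassemble, after summing a geometric--Euler-product argument on the prime powers, into the terms $H(0;\al,\be)\,\phi^\flat(q)\,\Q(q;\pi_{ij}(\al), \pi_{ij}(\be))$, where $\pi_{ij}$ denotes the transposition interchanging $\alpha_i$ and $\beta_j$. There are exactly $3\times 3=9$ such single-swap cosets in $S_6/(S_3\times S_3)$, matching the claimed main term. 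The choice of which $m \pm n$ branch is used determines an internal sign that combines correctly with the $\mu(d)$ inclusion-exclusion.

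Finally, the shifted contours contribute the error. After executing the sums over $(d, r, f)$ trivially (using that $f$ is short), what remains is essentially a dual character sum problem in which the new modulus has size $\ll Q^{1-\delta_0}$. I would bound this by combining the hybrid large sieve with the sixth moment for $\zeta$ from Lemma~\ref{le:zeta6th}; this produces the two error shapes $Q^{2+\varepsilon}Q^{-\delta_0/2}$ (dominant when $\max(M,N)$ is close to $Q^{2-\delta_0}$, where the dual length is nearly $1$) and $Q^{2+\varepsilon}Q^{-1/4+3\delta_0/4}$ (dominant when $\max(M,N)$ is smaller, arising from the $T^{5/4+\varepsilon}$ bound of Lemma~\ref{le:zeta6th}). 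The main obstacle will be the bookkeeping of Step~3: one must verify that the residues of the multidimensional Mellin--Barnes integral, after summing over $d$ and incorporating the Euler factors dictated by the coprimality condition $(mn, q)=1$, reassemble cleanly into the nine $\Q(q; \pi_{ij}(\al), \pi_{ij}(\be))$ terms without spurious contributions. This is a purely algebraic but delicate computation generalizing Section~3 of \cite{CIS}.
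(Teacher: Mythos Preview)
Your outline captures the spirit of the complementary-divisor/asymptotic-large-sieve approach, but it has a real gap at Step~1 that would cause Step~4 to fail. After Lemma~\ref{lem:orthogonal} you have $q=dr$ with $r\mid m\pm n$, and the complementary quotient $f=(m\pm n)/r$ satisfies $|f|\ll d\cdot Q^{1-\delta_0}$, as you correctly compute. But this is \emph{not} $\ll Q^{1-\delta_0}$: when $d$ is of size $Q$, the dual length $f$ is as large as $Q^{2-\delta_0}$, and the large sieve gives nothing. The paper (following \cite{CIS}) therefore introduces a cutoff parameter $D_0$ and treats $d>D_0$ and $d\le D_0$ separately. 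For $d>D_0$ one detects $r\mid m\pm n$ via characters $\psi\Mod r$; the principal $\psi$ produces a ``main term'' $\mathcal{MBD}$, and the nonprincipal $\psi$'s are bounded by the hybrid large sieve, giving $O(Q^{2+\varepsilon}/D_0)$. For $d\le D_0$ the complementary divisor is genuinely short, and after further decomposition (extracting $g=(m,n)$, writing $r=|\mathfrak m\pm\mathfrak n|/h$, and reintroducing characters $\psi\Mod{abh}$) one gets a main piece $\mathcal{MBG}$ and an error $\mathcal{EBG}\ll Q^{2-\delta_0+\varepsilon}D_0$. A key feature you are missing is that $\mathcal{MBD}$ \emph{cancels} against part of $\mathcal{MBG}$; only after this cancellation does the surviving main term $\mathcal{MBG}_1$ become tractable. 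The choice $D_0=Q^{\delta_0/2}$ balances $Q^{2+\varepsilon}/D_0$ against $Q^{2-\delta_0+\varepsilon}D_0$ and is the source of the error $Q^{2-\delta_0/2+\varepsilon}$.

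Two further points. First, your pole identification in Step~3 is off: $H(s;\al,\be)$ is a polynomial with \emph{zeros} at $s=\pm(\alpha_i-\beta_j)/2$, and these zeros are there precisely to kill the poles of $\Z$ (see the proof of Proposition~\ref{prop:diagonal}). The nine single-swap terms instead arise from poles of individual Riemann zeta factors $\zeta(\tfrac12+s_1+z_1+\alpha_\ell)$ and $\zeta(\tfrac12+s_2+z_2-\beta_k)$ at the edge of the critical strip, after expressing the surviving $\mathcal{MBG}_1$ through the triple Mellin transform $\widetilde{\mathcal W}_3$ (Lemmas~\ref{lem:Mellin1}--\ref{lem:MellinXYU}). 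Second, the error $Q^{-1/4+3\delta_0/4}$ does not come from the shifted contour in your Step~4; it comes from the separate task of showing that $\mathcal{MBG}_1(M,N)$ is negligible when $M$ or $N$ exceeds $Q^{2-\delta_0}$, so that the dyadic sum may be extended to all $M,N$ before the residue computation. It is in that auxiliary estimate (the paper's Lemma~\ref{lem:MBGunbalancedMN}) that Lemma~\ref{le:zeta6th} enters.
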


Morally the balanced case includes also the diagonal terms $\D$. Concerning them we will prove the following proposition.

\begin{prop}\label{prop:diagonal} Let $\varepsilon > 0$ and let $\D$ be as in \eqref{def:D}. Then
$$\D = H(0; \al, \be) \sum_q \Psi\bfrac{q}{Q}\phi^\flat(q) \Q(q; \al, \be) + O(Q^{5/4 + \varepsilon}).
$$
\end{prop}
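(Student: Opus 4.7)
The plan is to substitute the Mellin representation~\eqref{eqn:Walbe} of $W_{\al,\be}(m,m;q)$ into the definition~\eqref{def:D} of $\D$, evaluate the resulting Dirichlet series in $m$ as an explicit Euler product, and then shift the $s$-contour past the origin. Starting from $\tRe s = 1$, absolute convergence justifies interchanging the $m$-sum with the integral, which together with~\eqref{eqn:sumoverm} and~\eqref{def:A} yields
\[
\D = \sum_q \Psi\!\bfrac{q}{Q}\phi^\flat(q)\bfrac{q}{\pi}^{\!\delta(\al,\be)}\! \frac{1}{2\pi i}\!\int_{(1)}\! G\!\pr{\tfrac 12+s;\al,\be} H(s;\al,\be)\!\bfrac{q^3}{\pi^3}^{\!s}\! \frac{\A(\tfrac 12+s;\al,\be)\,\Z(\tfrac 12+s;\al,\be)}{\B_q(\tfrac 12+s;\al,\be)}\, \frac{ds}{s}.
\]

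Next I would shift the contour from $\tRe s = 1$ to $\tRe s = -\tfrac 14 + \varepsilon$, the latter being dictated by the region of absolute convergence of $\A(\tfrac 12+s;\al,\be)$. Inside this strip the only possible singularities of the integrand are the simple pole at $s=0$ coming from $1/s$, together with the nine simple poles of $\Z(\tfrac 12+s;\al,\be) = \prod_{i,j}\zeta(1+2s+\alpha_i-\beta_j)$ located at $s = -(\alpha_i-\beta_j)/2$. The hypothesis $\alpha_i\neq\beta_j$ separates these nine points from the origin, and at each of them the triple zero of $H(s;\al,\be)$ more than cancels the simple pole of $\Z$, so the integrand is in fact holomorphic there. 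A direct evaluation using~\eqref{def:Qalbe} identifies the residue at $s=0$ as $H(0;\al,\be)\,\Q(q;\al,\be)$, and summing against $\Psi(q/Q)\phi^\flat(q)$ produces exactly the main term claimed.

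It remains to bound the integral on the shifted line. On $\tRe s = -\tfrac 14 + \varepsilon$, the six Gamma factors comprising $G(\tfrac 12+s;\al,\be)$ together contribute the rapid decay $e^{-3\pi|\Im s|/2}$ by Stirling; the polynomial $H(s;\al,\be)$ and (by convexity bounds) $\Z(\tfrac 12+s;\al,\be)$ grow only polynomially in $|\Im s|$; the factor $\A(\tfrac 12+s;\al,\be)$ is bounded since $\tfrac 12+s$ still lies in its half-plane of absolute convergence; and $\B_q(\tfrac 12+s;\al,\be)^{-1}\ll q^\varepsilon$, each local factor being bounded away from $0$ for shifts of size $O(1/\log Q)$. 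Combined with $|(q^3/\pi^3)^s|\ll q^{-3/4+3\varepsilon}$ this yields an integral of size $\ll q^{-3/4+\varepsilon}$, and summation over $q\asymp Q$ weighted by $\phi^\flat(q)\ll q$ produces the error $O(Q^{5/4+\varepsilon})$.

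The only delicate step is verifying that the triple zeros of $H$ precisely swallow the simple poles of $\Z$ inside the critical strip; once this is recorded, the proof is a routine contour shift plus Stirling, and the exponent $5/4$ is forced exactly by the width $1/4$ of the half-plane of absolute convergence of $\A$.
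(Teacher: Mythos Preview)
Your proof is correct and follows essentially the same route as the paper: insert the Mellin integral for $W_{\al,\be}$, recognize the $m$-sum as $\A\Z/\B_q$ at $\tfrac12+s$, shift the $s$-contour to $\tRe s=-\tfrac14+\varepsilon$, collect the residue at $s=0$ (the poles of $\Z$ being killed by the zeros of $H$), and estimate the shifted integral to get $O(q^{-3/4+\varepsilon})$ per modulus. Your write-up is in fact slightly more explicit than the paper's in justifying the decay on the shifted line.
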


Proofs of Propositions~\ref{prop:offdiagonal} and~\ref{prop:diagonal} follow~\cite{CIS}. Proposition \ref{prop:diagonal} will be proven in Section~\ref{sec:diagonal} while the longer proof of Proposition \ref{prop:offdiagonal} will be given in Section~\ref{sec:balancedsum}.

\subsubsection{Unbalanced sums}
In the second case one of $M$ and $N$ is much larger than the other. This case was not encountered in~\cite{CIS}. Without loss of generality, we can concentrate on the case $M > N$. We define
\begin{equation}
\label{eq:USdef}
\mathcal{US}(\Psi, Q; \al, \be)  := \sum_{q} \Psi\bfrac{q}{Q} \sumbq \sumd_{\substack{M, N \\ M \ge Q^{2-\delta_0} \\ M > N}} S(M, N),
\end{equation}
with $S(M, N)$ be as in \eqref{def:SMN1}, and will show the following.

\begin{prop}\label{prop:unbalanced} Let $\varepsilon > 0$ and $\delta_0 \in (0, 1/8)$. For any $D \geq 1/2$ and $\delta' \in (0, 1/2)$, we have that
$$\mathcal{US}(\Psi, Q; \al, \be)  \ll   Q^{2+\varepsilon} \left(\frac{1}{D} + Q^{-\delta'/2}+ D Q^{-11/384+ \delta_0 / 2 +11 \delta'/192} \right).
$$
\end{prop}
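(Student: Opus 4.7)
Following the outline in \S 1, I split the range $M \geq Q^{2-\delta_0}$ (with $N \leq M$ and $MN \ll Q^{3+\varepsilon}$ by the rapid decay in Lemma~\ref{lem:weightW}) at a threshold $M = Q^{5/2+\delta'}$, treating the extreme range $M > Q^{5/2+\delta'}$ and the intermediate range $Q^{2-\delta_0} \leq M \leq Q^{5/2+\delta'}$ by different methods. Since $\alpha_i, \beta_j \ll 1/\log Q$, the coefficients $\sigma(m;\al)$ behave like the divisor function $d_3(m)$, so the heuristics of the introduction apply to $S(M,N)$. The parameter $D$ will appear as a trade-off parameter controlling a new main-term subtraction that is only present when $M > Q^2$.

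\textbf{Extreme range $M > Q^{5/2+\delta'}$.} I apply the functional equation to each of the three factors $L(s+\alpha_i, \chi)$ inside $\sigma(m;\al)$, flipping the long $m$-sum to its dual of length $\asymp Q^3/M$ at the cost of a root-number factor $\epsilon_\chi^3$ and archimedean gamma factors (harmless under $\alpha_i \ll 1/\log Q$). Writing $\epsilon_\chi^3$ via Gauss sums, the resulting character sum $\sumb_\chi \epsilon_\chi^3 \chi(n)\cb(m^\ast)$ over primitive even $\chi \Mod q$ reduces, after opening via Lemma~\ref{lem:orthogonal}, to a hyper-Kloosterman sum modulo $q$, which Deligne bounds by $O(q^{1/2+\varepsilon})$. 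Estimating the remaining $n \sim N$, $q \sim Q$, and dual $m^\ast \sim Q^3/M$ sums trivially against the $1/\sqrt{mn}$ weight produces a contribution $\ll Q^{2+\varepsilon-\delta'/2}$, matching the $Q^{-\delta'/2}$ term in the proposition.

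\textbf{Intermediate range $Q^{2-\delta_0} \leq M \leq Q^{5/2+\delta'}$.} Open $\sigma(m;\al)$ as the triple convolution $\sum_{efg = m} e^{-\alpha_1} f^{-\alpha_2} g^{-\alpha_3}$, smoothly dyadically decomposed into ranges $e \sim E \geq F \geq G \sim g$ with $EFG \asymp M$ (so $G \leq M^{1/3} \leq Q^{5/6 + \delta'/3}$). Apply Poisson summation to the two longest variables $e$ and $f$; this costs a root-number factor $\epsilon_\chi^2$ and replaces the $e,f$ sums by dual sums of lengths $Q/E$ and $Q/F$. Orthogonality of characters (Lemma~\ref{lem:orthogonal}) then transforms $\sum_\chi \epsilon_\chi^2 \overline\chi(nef)\chi(g)$ into a Kloosterman sum $S(\pm nef, \overline g; q)$, to which I apply Kuznetsov's trace formula in $q$. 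The upshot is a spectral sum of the form \eqref{eq:largesieve} (plus harmless holomorphic and continuous contributions). Cauchy-Schwarz in the spectrum separates the Hecke sum $\sum_n d_3(n)\lambda_j(n)/\sqrt n$ (length $N$) from the grouped sum $\sum_{k \asymp Q^2/EF} \alpha(k)\lambda_j(k)/\sqrt k$ with $\alpha(k) = \sum_{ef=k} 1$, as in \eqref{eq:tob}. A refined Deshouillers-Iwaniec spectral large sieve, applied on both factors with the Kim-Sarnak bound $\theta \leq 7/64$ controlling the amplification $X^{2i\kappa_j}$ at exceptional eigenvalues, gives the bound \eqref{eq:tog}. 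Optimising the split $YZ = EFG/N$ and the parameters $E,F,G,N$ subject to $EFG \asymp M$ and the size constraints produces the third error term $D Q^{2+\varepsilon - 11/384 + \delta_0/2 + 11\delta'/192}$.

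\textbf{Main obstacle.} The principal new difficulty, absent from \cite{CIS}, is that when $M > Q^2$ the sum $S(M,N)$ produces additional diagonal-type main terms: the zero-frequency contribution after Poisson summation, combined with the identity contribution from Kuznetsov's formula and the Eisenstein piece, yields terms of size comparable to the diagonal $\D$ that must either be absorbed into $\Q(q;\al,\be)$ or shown to cancel. The parameter $D$ mediates this separation: taking $D$ larger isolates the dangerous main terms into a smaller piece but worsens the residual large sieve estimate, producing an error of order $Q^{2+\varepsilon}/D$. Establishing these cancellations rigorously requires an application of the large sieve inequality that is genuinely new in this context, and this step is where I expect most of the work to lie. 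Balancing the three error terms $1/D$, $Q^{-\delta'/2}$, $D Q^{-11/384 + \delta_0/2 + 11\delta'/192}$ over $D$, $\delta'$, and $\delta_0$ is what ultimately produces the power saving of $11/1196$ recorded in Theorem~\ref{thm:main}, and crucially relies on the Kim-Sarnak exponent being strictly below $1/7$.
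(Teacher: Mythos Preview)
Your outline reproduces the heuristic of \S1.1 faithfully, but it misidentifies both the role of the parameter $D$ and the source of the $Q^{2+\varepsilon}/D$ term, and in doing so misses the actual structure of the argument.

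In the paper the order of operations is: apply orthogonality (Lemma~\ref{lem:orthogonal}) \emph{first}, so that the character sum becomes a divisor sum $\sum_{dr=q}\mu(d)\phi(r)$; only then is Poisson summation applied to $e$ and $f$, producing Kloosterman sums $S(\overline{\nu_2}f, ne\overline{\nu_1 g}; r)$ directly (Lemmas~\ref{lem:firstpoisson} and~\ref{lem:2timespoisson}). There is no intermediate $\epsilon_\chi^2$ or $\epsilon_\chi^3$, and no functional equation is used. The zero-frequency terms do \emph{not} produce main terms requiring cancellation with $\mathcal{D}$: the $e=0$ contribution vanishes identically by M\"obius (see~\eqref{eqn:e0contribution} and the computation following it), while the $f=0$ and $g=0$ terms are bounded outright by $Q^{11/6+\delta_0/3+\varepsilon}$ and $Q^{7/6+2\delta_0/3+\varepsilon}$ (Lemmas~\ref{lem:d>Df=0} and~\ref{lem:d>Dg=0}), with no dependence on $D$ whatsoever. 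So your ``main obstacle'' paragraph describes a difficulty that does not actually arise.

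The parameter $D$ is instead a threshold on the auxiliary product $d\gamma\kappa$, where $d$ comes from primitivity via Lemma~\ref{lem:orthogonal}, $\gamma$ from removing $(n,r)=1$ by M\"obius, and $\kappa$ from extracting the gcd of $e$ and $r$. The primary split is \emph{not} by the size of $M$ but by $d\gamma\kappa \gtrless D$. When $d\gamma\kappa > D$ one applies Poisson to $g$ as well, obtaining hyper-Kloosterman sums; these are handled either by Deligne's bound plus a trivial estimate (when $EFG \geq Q^{5/2+\delta'}$, giving the $Q^{-\delta'/2}$ term, Lemma~\ref{lem:EFGverylarge}), or by Cauchy--Schwarz and the additive large sieve over $r$ and the residue $a$ (Lemmas~\ref{lem:T1}, \ref{lem:T2}, Proposition~\ref{prop:Vdss1s2bdd}), which is where the $1/D$ term comes from. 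When $d\gamma\kappa \leq D$ one applies the Kuznetsov/Deshouillers--Iwaniec machinery (Lemma~\ref{le:Klo1}) to the Kloosterman sums directly; the factor $D$ then appears because $\nu_1,\nu_2 \mid d\gamma\kappa$ are bounded by $D$, giving the $DQ^{-11/384+\cdots}$ term. Your sketch correctly identifies the Kuznetsov/large-sieve ingredient and the Kim--Sarnak dependence, but the trade-off governed by $D$ is between these two regimes of $d\gamma\kappa$, not between main-term extraction and error estimation.
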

The proof of Proposition \ref{prop:unbalanced} will be given in Sections \ref{sec:unbalancedprelim}--\ref{se:dgamsmall}.

\section{Proof of Theorem \ref{thm:main}}
We can now quickly deduce Theorem~\ref{thm:main} assuming Propositions~\ref{prop:diagonal},~\ref{prop:offdiagonal}, and~\ref{prop:unbalanced}. Recall that we would like to evaluate 
$$ \sum_q \Psi\left( \frac{q}{Q}\right) \sumb_{\chi \Mod q} \Lambda(\chi; \al, \be). $$
From Lemma \ref{lem:approxfunc}, we have
\begin{equation*}
H(0; \al, \be) \Lambda(\chi; \al, \be) = \Lambda_0(\chi; \al, \be) + \Lambda_0(\chi; \be, \al),
\end{equation*}
and by~\eqref{eq:extractDiagonal},~\eqref{eq:BSdef}, and~\eqref{eq:USdef} we have
\begin{align*}
  &H(0; \al, \be) \sum_q \Psi\left( \frac{q}{Q}\right) \sumb_{\chi \Mod q} \Lambda_0(\chi; \al, \be) \\
  &= \mathcal D(\Psi, Q ; \al, \be) + \mathcal {BS}(\Psi, Q ; \al, \be) + 2\mathcal {US}(\Psi, Q ; \al, \be) +  O(1/Q).
  \end{align*}
We shall see from Propositions~\ref{prop:diagonal}--\ref{prop:unbalanced} that
\begin{align}
  \label{eq:H0pf2.1}
  \begin{aligned}
    &H(0; \al, \be)\sum_q \Psi\bfrac{q}{Q} \sumbq \Lambda(\chi; \al, \be) = H(0; \al, \be)\sum_q \Psi\bfrac{q}{Q} \phib \tQ(q; \al, \be) \\
    &+ O\left(Q^{2+\varepsilon} \left(Q^{-\frac{\delta_0}{2}} + Q^{- \frac{1}{4} + \frac{3\delta_0}{4}}+  \frac{1}{D} + Q^{-\frac{\delta'}{2}}+ D Q^{-\frac{11}{384}+  \frac{\delta_0 }{ 2} + \frac{11 \delta'}{192}} \right) \right).
  \end{aligned}
  \end{align}
Indeed, the error terms match, and to match the main terms, let $\pi \in S_6 / S_3 \times S_3$.
\begin{enumerate}
	\item The contribution of $\pi$ being the coset $S_3 \times S_3$ corresponds to the diagonal 
	$\mathcal{D}(\Psi, Q; \al, \be)$.
	\item The contribution of $\pi$ being the coset of the element that flips all $\alpha_i$ with all $\beta_i$ corresponds to $\mathcal{D}(\Psi, Q; \be, \al)$.
	\item The contributions of $\pi$ being a coset of an element that flips exactly one $\alpha_i$ with one $\beta_i$ sums to $\mathcal{B}\mathcal{S}(\Psi, Q; \al, \be)$.
	\item The contributions of $\pi$ being a coset of an element that flips exactly two $\alpha_i$ with two $\beta_i$ sums to $\mathcal{B}\mathcal{S}(\Psi, Q; \be, \al)$. 
\end{enumerate}

To minimize the error term in~\eqref{eq:H0pf2.1}, we note that the second term is always majorized by the fifth term if the entire error term is to be $\ll Q^2$.  To balance the remaining terms, we choose
\[
Q^{\delta_0/2} = D = Q^{\delta'/2},
\]
so the error term is
\[
\ll Q^{2+\varepsilon}\left(\frac{1}{D} + Q^{-\frac{11}{384}} D^{2+\frac{11}{96} }\right).
\]
This is further balanced for $D = Q^{11/1196}$ and so, with this choice, the error term is $O(Q^{2-11/1196+\varepsilon})$.

Similarly to~\cite[End of Section 11]{CIS}, we can remove the factor $H(0; \al, \be)$ and conclude the proof of Theorem~\ref{thm:main}.

\section{The diagonal terms }\label{sec:diagonal}
In this section we will prove Proposition \ref{prop:diagonal}. The proof is similar to \cite[Proof of Lemma 3]{CIS}, with slight modifications. We include the proof details to make this paper more self-contained. 

By the definition of $W_{\al, \be}$ in \eqref{eqn:Walbe}, the sum over $m$ in $\D$ in \eqref{def:D} is
\begin{equation}\label{integralinD} \bfrac{q}{\pi}^{\delta(\al, \be)} \frac{1}{2\pi i} \int_{(1)} G \left(\frac 12 + s; \al, \be\right) H(s; \al, \be) \left( \frac q\pi\right)^{3s} \sum_{(m, q) = 1} \frac{\sigma(m; \al) \sigma(m; -\be) }{m^{1 + 2s}}  \frac{ds}{s}.
\end{equation}

Moreover, from \eqref{eqn:sumoverm},~\eqref{def:A}, and \eqref{eq:Bqdef}, we obtain that
\begin{align*} \sum_{(m, q) = 1} \frac{\sigma(m; \al) \sigma(m; -\be) }{m^{1 + 2s}}
  = \frac{\A\left( \frac 12 + s; \al, \be \right)\Z\left( \frac 12 + s; \al, \be \right)}{\B_q\left( \frac 12 + s; \al, \be \right)},
\end{align*}
and so \eqref{integralinD} equals
\begin{align*}
  \bfrac{q}{\pi}^{\delta(\al, \be)} \frac{1}{2\pi i} \int_{(1)} G \left(\frac 12 + s; \al, \be\right) H(s; \al, \be) \left( \frac q\pi\right)^{3s} \frac{\A\left( \frac 12 + s; \al, \be \right)\Z\left( \frac 12 + s; \al, \be \right)}{\B_q\left( \frac 12 + s; \al, \be \right)}  \frac{ds}{s}.
  \end{align*}
We move the contour integral to $\R (s) = -\frac 14 + \varepsilon$, picking up a simple pole at $s = 0$. Note  that the poles of $\Z(1/2 + s; \al, \be)$  at $s = -(\alpha_i - \beta_j)/2$ are cancelled by the zeros of $H(s; \al, \be)$ at these same points. Thus the expression above is 
\begin{align*}
  \bfrac{q}{\pi}^{\delta(\al, \be)}\left( G\left( \frac 12 ; \al, \be\right) H(0; \al, \be)\frac{\A\left( \frac 12; \al, \be \right)\Z\left( \frac 12; \al, \be \right)}{\B_q\left( \frac 12; \al, \be \right)} + O(q^{-3/4 + 4\varepsilon})\right),
\end{align*}
and we obtain the lemma by inserting this into~\eqref{def:D}, using~\eqref{def:Qalbe}, and adjusting $\varepsilon$.

\section{Balanced sums} \label{sec:balancedsum}

\subsection{Initial reductions}
We will follow \cite[Sections 5--10]{CIS} to calculate the balanced sum in Proposition~\ref{prop:offdiagonal}. Since many calculations will be very similar to \cite{CIS}, we will quote results from~\cite{CIS} along with necessary modification for our balanced sum. 

Using orthogonality relation for characters in Lemma \ref{lem:orthogonal}, we obtain that 

\begin{align} \label{eqn:BSinitial}
  \begin{aligned}
    \mathcal{BS}(\Psi, Q; \al, \be)  &= \frac 12\sumd_{\substack{M, N \\ \max(M, N) \le Q^{2-\delta_0}}} \sumtwo_{\substack{m, n \geq 1\\ m\neq n}} \frac{\sigma(m; \al) \sigma(n; -\be) }{\sqrt{mn}}  V\bfrac{m}{M} V\bfrac{n}{N} \\
	& \hskip 1in \cdot \sumtwo_{\substack{d, r \\ (dr, mn) = 1 \\ r | m \pm n}} \mu(d) \phi(r) \Psi \left( \frac{dr}{Q}\right)  W_{\al, \be}\left(m, n ; dr\right) \\
        &=: \sumd_{\substack{M, N \\ \max(M, N) \le Q^{2-\delta_0}}}  \mathcal {BD}(M, N) + \sumd_{\substack{M, N \\ \max(M, N) \le Q^{2-\delta_0}}} \mathcal {BG}(M, N),
  \end{aligned}
\end{align}
where for $D_0$ is a parameter to be chosen later, $\mathcal {BD}(M, N)$ is the contribution from terms with $d > D_0$ and $\mathcal {BG}(M, N)$ is the contribution of terms with $d \leq D_0.$

First, we consider $\mathcal {BD} (M, N)$. By following the arguments in \cite[Section 5]{CIS}, we show the following.
\begin{lem}  \label{lem:BD}
Let $\delta_0 > 0$ and let $M, N$ be such that $\max(M, N) \leq Q^{2-\delta_0}$, and let $D_0 \geq 1/2$. Then
\begin{align*}
  \mathcal{BD}(M, N) = \mathcal {MBD}(M, N) + O\left( \frac{Q^{2 + \varepsilon}}{D_0} + D_0Q^{3/2 + \varepsilon}\right),
\end{align*}
where 
\begin{align}
  \label{def:MBDMN}
  \begin{aligned}
\mathcal {MBD}(M, N) := -Q^{1 + \delta(\al, \be)} &\sum_{ \substack{m, n \geq 1 \\ m \neq n} } \frac{\sigma(m; \al) \sigma(n; -\be)}{\sqrt{mn}} V\left( \frac mM\right) V\left( \frac nN\right) \\
&\cdot \sum_{\substack{d \leq D_0 \\ (d, mn) = 1}} \frac{\mu(d)}{d} \frac{\phi(mn)}{mn} \int_0^\infty \Psi(u)  W_{\al, \be} \left( \frac{m}{Q^{3/2}}, \frac{n}{Q^{3/2}}; u \right) \> du.
  \end{aligned}
\end{align}

\end{lem}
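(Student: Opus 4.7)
The plan for proving Lemma \ref{lem:BD} is to follow the approach of Conrey--Iwaniec--Soundararajan \cite[Section 5]{CIS}. The key observation is that when $d > D_0$, we have $r \asymp Q/d < Q/D_0$, so the inner sum over $r \mid m \pm n$ is over a small range of divisors. The overall strategy is to extract from the $r$-sum a main term (by approximating the discrete sum over divisors by a smooth integral), then reduce the $d$-sum using the identity $\sum_{d=1}^\infty \mu(d)/d = 0$, and finally control the remaining errors.

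First I would fix $m \neq n$ and analyze the inner sum
\[
\sum_{\substack{d > D_0 \\ (d,mn)=1}} \mu(d) \sum_{\substack{r \mid m \pm n \\ (r, mn) = 1}} \phi(r)\, \Psi(dr/Q)\, W_{\al, \be}(m, n; dr).
\]
Applying Mellin inversion to $\Psi$ and $W_{\al, \be}$ separates the $d$- and $r$-dependencies. The $r$-sum over divisors of $m \pm n$ coprime to $mn$, weighted by $\phi(r)/r^s$, can be written as a Dirichlet series with explicit Euler product, and moving the contour past its leading pole produces a main term proportional to $(m \pm n)\cdot \phi(mn)/(mn)$. Reassembled with the Mellin variable of $\Psi$, and via the change of variables $u = dr/Q$, this isolates the factor
\[
\frac{Q}{d}\cdot \frac{\phi(mn)}{mn} \int_0^\infty \Psi(u) W_{\al, \be}\!\left(\frac{m}{Q^{3/2}}, \frac{n}{Q^{3/2}}; u\right) du,
\]
which up to sign is exactly the summand of $\mathcal{MBD}(M,N)$.

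To obtain the form of $\mathcal{MBD}(M, N)$, whose $d$-sum runs over $d \le D_0$, I would invoke the identity $\sum_{d \ge 1,\,(d,mn)=1} \mu(d)/d = 0$ (valid for $mn > 1$) to flip $\sum_{d > D_0}$ into $-\sum_{d \le D_0}$, incurring a tail error of size $O(D_0^{-1+\varepsilon})$ per $(m,n)$; combined with the trivial $Q^{2+\varepsilon}$ size of the $m, n$ sums, this contributes $O(Q^{2+\varepsilon}/D_0)$. The secondary error $O(D_0 Q^{3/2+\varepsilon})$ arises from bounding the discrepancy between the actual discrete $r$-sum and the Euler-product main term, i.e.\ from all the non-leading residues and contour remainders after the shift; these are controlled using Lemma \ref{lem:weightW} to truncate $mn \le Q^{3+\varepsilon}$, using Lemma \ref{le:zeta6th} to handle the resulting divisor sums efficiently in the range $\max(M,N) \le Q^{2-\delta_0}$, and summing over $d \le D_0$ trivially.

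The main obstacle is the delicate bookkeeping needed to verify that the extracted main term matches $\mathcal{MBD}(M,N)$ \emph{exactly}: tracking the $(q/\pi)^{\delta(\al,\be)}$ normalization, the gamma factors inside $W_{\al,\be}$, and the interaction between the coprimality $(r,mn)=1$ and the Euler product of $\A\Z/\B_q$ requires careful residue calculus. Since this parallels the corresponding step in~\cite[Section 5]{CIS}, the main challenge is adapting the $t$-aspect argument there to our $q$-aspect setting without losing the savings; otherwise the details are largely routine.
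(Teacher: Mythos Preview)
There is a genuine gap. The sum $\sum_{r\mid m\pm n,\,(r,mn)=1}\phi(r)r^{-s}$ is a \emph{finite} sum over divisors of the fixed integer $m\pm n$, hence entire in $s$; there is no ``leading pole'' to shift past, and no contour argument extracts from it a main term of the shape $(m\pm n)\cdot\tfrac{\phi(mn)}{mn}$. Moreover, the flip $\sum_{d>D_0}=-\sum_{d\le D_0}$ via $\sum_{(d,mn)=1}\mu(d)/d=0$ is an exact identity, so it cannot be the source of the $O(Q^{2+\varepsilon}/D_0)$ term (and even viewed as a tail bound, $\sum_{d>D_0}\mu(d)/d$ is not $O(D_0^{-1+\varepsilon})$ unconditionally). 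Your sketch therefore has no mechanism that actually produces the saving $1/D_0$.

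The paper proceeds differently: one detects $r\mid m\pm n$ by even Dirichlet characters $\psi$ modulo $r$. The principal-character contribution turns the $r$-sum into a free sum over \emph{all} $r$ with $(r,mn)=1$; grouping by $q=dr$ and using the elementary identity $\sum_{d\mid q}\mu(d)=0$ for $q>1$ converts $\sum_{d>D_0}$ into $-\sum_{d\le D_0}$, after which the asymptotic $\sum_{r\le x,\,(r,mn)=1}1=\tfrac{\phi(mn)}{mn}x+O((mn)^\varepsilon)$ together with partial summation yields $\mathcal{MBD}(M,N)+O(D_0Q^{3/2+\varepsilon})$. The entire $O(Q^{2+\varepsilon}/D_0)$ comes from the non-principal characters: expressing the $m,n$-sums as products of Dirichlet $L$-functions and applying the sixth-moment large sieve (as in \cite[Proposition~3]{CIS}) exploits that the conductors satisfy $r\ll Q/D_0$. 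Lemma~\ref{le:zeta6th} plays no role in this step.
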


\begin{proof}
Let us consider the sum $\mathcal{BD}(M, N)$. By Lemma~\ref{lem:weightW} we can assume that, for any $\varepsilon > 0$, $MN \leq Q^{3+\varepsilon}$.	Now, we express the condition $r|m \pm n$ in~\eqref{eqn:BSinitial} as $\frac{2}{\phi(r)}\sum_{ \substack{\psi \Mod r \\ \psi(-1) = 1} } \psi(m) \overline{\psi(n)}$.  The contribution of the principal characters is 
	$$ \sum_q \left( \sum_{\substack{dr = q \\ d > D_0}}\mu(d)\right) \Psi\left( \frac qQ\right) \sumtwo_{\substack{m, n \geq 1 \\ (mn, q) = 1 \\ m \neq n}} \frac{\sigma(m, \al)\sigma(n, -\be)}{\sqrt{mn}} V\bfrac{m}{M} V\bfrac{n}{N}  W_{\al, \be}\left(m, n ; q\right) .$$
	Since $\sum_{ dr = q } \mu(d) = 0$ when $q > 1,$ the above is
\begin{equation}	\label{eqn:MBD1}
	 - \sumtwo_{\substack{m, n \geq 1  \\ m \neq n}} \frac{\sigma(m, \al)\sigma(n, -\be)}{\sqrt{mn}} V\bfrac{m}{M} V\bfrac{n}{N} \sum_{\substack{d \leq D_0 \\ (d, mn) = 1} }  \mu(d)  \sum_{\substack{r \\ (r, mn) = 1}} \Psi\left( \frac {dr}Q\right)  W_{\al, \be}\left(m, n ; dr\right) .
	 \end{equation}
Now, we use the fact that 
 $$ \sum_{\substack{r \leq x \\ (r, mn)= 1}} 1 = \frac{\phi(mn)}{mn} x + O((mn)^\varepsilon), $$
partial summation, and the formula 
\begin{equation}
\label{eq:WFE}
W_{\al, \be}(m, n; uQ) = Q^{\delta(\al, \be)} W_{\al, \be} \left( \frac{m}{Q^{3/2}}, \frac{n}{Q^{3/2}}; u \right)
\end{equation}
to derive that \eqref{eqn:MBD1} equals 
$$ \mathcal {MBD}(M, N) + O(D_0Q^{\frac 32 + \varepsilon}).$$ 
	
Next let $\mathcal {EB}(M, N)$ be the contribution from the non-principal characters, so that
\begin{align}
  \label{EBterm}
  \begin{aligned}
    \mathcal {EB}(M, N) &:= \sum_{d > D_0} \sum_r \mu(d) \Psi\left( \frac{dr}{Q} \right) \sum_{ \substack{\psi \Mod r \\ \psi(-1) =1 \\ \psi \neq \psi_0}} \sumtwo_{\substack{m, n \geq 1 \\ m \neq n \\ (mn, dr) = 1}} \psi(m) \overline{\psi(n)} \frac{\sigma(m; \al) \sigma(n, -\be)}{\sqrt{mn}} \\
                        & \hskip 1in \cdot V\bfrac{m}{M} V\bfrac{n}{N}  W_{\al, \be}\left(m, n ; dr\right) + O\left( \frac{Q^{2 + \varepsilon}}{D_0}\right),
  \end{aligned}
\end{align}
where $\psi_0$ is the principal character.
We will show that 
	$$\mathcal {EB}(M, N) \ll \frac{Q^{2 + \varepsilon}}{D_0}.$$ 
Note that adding back the terms $m = n$ to~\eqref{EBterm} contributes $O\left( \frac{Q^{2 + \varepsilon}}{D_0}\right)$. By the definition of $W_{\al, \be}$ in \eqref{eqn:Walbe} and Mellin inversion for $V$, the sum over $m, n$ in $\mathcal {EB}(M, N)$ (without the condition $m \neq n$) is
\begin{align} \label{summninEB}
  \begin{aligned}
\frac{1}{(2\pi i )^3}\int_{(1)} \int_{(\varepsilon)} \int_{(\varepsilon)} &G\left( \frac 12 + s; \al, \be \right) H(s; \al, \be) \left( \frac{dr}{\pi}\right)^{3s + \delta(\al, \be)} \widetilde{V}(z) \widetilde{V}(w) M^{z} N^{w} \\
                                                                          & \cdot \sumtwo_{\substack{m, n \geq 1 \\ (mn, dr) = 1}} \psi(m) \overline{\psi(n)} \frac{\sigma(m; \al) \sigma(n, -\be)}{m^{1/2 + s + z} n^{1/2 + s + w}} \> dz \> dw\>\frac{ds}{s},
  \end{aligned}
\end{align}
where $\varepsilon > 0.$ The sum over $m, n$ can be expressed in terms of Dirichlet $L$-functions as 
$$  \prod_{i = 1}^3  \frac{ L(\tfrac 12 + \alpha_i + s + z, \psi)}{ L_{dr}(\tfrac 12 + \alpha_i + s + z, \psi)} \prod_{j=1}^3 \frac{L(\tfrac 12 - \beta_j + s + w, \overline{\psi})  }{L_{dr}(\tfrac 12 - \beta_j + s + w, \overline{\psi})},$$
where $L_{dr}(s, \psi) = \prod_{p | dr} \left( 1- \frac{\psi(p)}{p^s}\right)^{-1}$. Since $\psi$ is not the trivial character, the Dirichlet $L$-functions above are entire.  We thus move the integral over $s$ to $\R(s) = \varepsilon$ without crossing any poles of the integrand. We further note that the gamma factor $G$ is $\ll \exp(- |\textrm{Im}(s)|)$, $L_{dr}(s, \psi) \ll Q^{\varepsilon}$,  $\widetilde{V}(\sigma + it) \ll_{\sigma, A} \frac{1}{1 + |t|^A}$, and $M, N \ll Q^{2 - \delta_0}$. Hence, the triple integral in \eqref{summninEB} is bounded by
\begin{align*} Q^{O(\varepsilon)} \int_{(\varepsilon)} \int_{(\varepsilon)} \int_{(\varepsilon)} &\exp(-|\textrm{Im}(s)|) \frac{1}{1+|z|^{10}} \frac{1}{1+|w|^{10}} \\
                                                &\left( \sum_{i = 1}^3 \left| L\left( \frac 12 + \alpha_j + s + z , \psi \right)\right|^6 + \sum_{j = 1}^3 \left| L\left( \frac 12 - \beta_j + s + w , \psi \right)\right|^6\right) \> dz \> dw \> ds .
\end{align*}
As in~\cite[Proof of Proposition 3]{CIS}, we insert this into~\eqref{EBterm} (recalling we removed the condition $m \neq n$) and use the large sieve inequality (analogously to~\cite[Theorem 7.34]{IK}). Adjusting $\varepsilon$, we obtain that $\mathcal{EB}(M, N) \ll \frac{Q^{2 + \varepsilon}}{D_0}$. 

\end{proof}

We recall that 
\begin{align*}
  \mathcal {BG}(M, N) &= \frac 12\sumtwo_{\substack{m, n \geq 1\\ m\neq n}} \frac{\sigma(m; \al) \sigma(n; -\be) }{\sqrt{mn}}  V\bfrac{m}{M} V\bfrac{n}{N} \\
                      & \hskip 1in \cdot \sumtwo_{\substack{d \leq D_0, r \\ (dr, mn) = 1 \\ r | m \pm n}} \mu(d) \phi(r) \Psi \left( \frac{dr}{Q}\right)   W_{\al, \be}\left(m, n ; dr\right).
\end{align*}

Let $g = \gcd(m, n)$ and write $m = g\m$ and $n = g\n$. Arguing as in~\cite[Equations (28)--(30)]{CIS}, we obtain
$$  \mathcal {BG}(M, N) = \mathcal {BG}^{+}(M, N) + \mathcal {BG}^-(M, N),$$
where
\begin{align}
\label{eq:BGpm}
  \begin{aligned}
  \mathcal{BG}^{\pm}(M, N) &:= \frac 12\sumtwo_{\substack{m, n \geq 1\\ m\neq n}} \frac{\sigma(m; \al) \sigma(n; -\be) }{\sqrt{mn}}  V\bfrac{m}{M} V\bfrac{n}{N} \\
& \qquad \cdot \sum_{ \substack{d \leq D_0 \\ (d, mn) = 1 }} \sum_{\substack{a \geq 1 \\ (a, g) = 1}} \sum_{\substack{b|g}}  \sum_{ \substack{h \geq 1 \\ \m \equiv \mp \n \Mod{abh}}} \mu(d) \mu(a) \mu(b) \\
& \qquad \cdot \frac{|\m \pm \n|}{ah} \Psi \left( \frac{|\m \pm \n|d}{Qh}\right) W_{\al, \be}\left(g\m, g\n ; \frac{d|\m \pm \n|}{h}\right).
\end{aligned}
\end{align}

\begin{rem} \label{rem:gsize}
Since $g = \textrm{gcd}(m, n)$, $b | g$, $m \asymp M$ and $n \asymp N$, we have that $b, g \ll \min( M, N).$  Moreover, the factor $\Psi \left( \frac{|\m \pm \n|d}{Qh}\right)$ forces $h \leq \frac{10 \cdot Q^{2 - \delta_0} D_0}{Q} = 10 \cdot Q^{1 - \delta_0} D_0. $
\end{rem}
We define, for $x, y, u \geq 0$,
\begin{equation}
\label{eq:Walbedef}
\mathcal W^{\pm}_{\al, \be} (x, y; u) := u|x \pm y| \Psi(u |x\pm y|)  W_{\al, \be}\left(x, y ; u |x \pm y|\right).
\end{equation}
\begin{rem} This function $\mathcal W^{\pm}_{\al, \be} (x, y; u)$ here is defined similarly to $\mathcal W^{\pm}_{\al, \be} (x, y; u)$ in \cite{CIS} but we use $W_{\al, \be}\left(x, y ; u |x \pm y|\right)$ instead of $V_{\al, \be}\left(x, y ; u |x \pm y|\right)$, which is defined in (16) of \cite{CIS}.
\end{rem}

We obtain from~\eqref{eq:BGpm},~\eqref{eq:WFE} and ~\eqref{eq:Walbedef} that
\begin{align*}
\mathcal{BG}^{\pm}(M, N) =& \frac{ Q^{1 + \delta(\al, \be)}}{2}  \sumtwo_{\substack{m, n \geq 1\\ m\neq n}} \frac{\sigma(m; \al) \sigma(n; -\be) }{\sqrt{mn}}  V\bfrac{m}{M} V\bfrac{n}{N} \\
&\cdot  \sum_{ \substack{d \leq D_0 \\ (d, mn) = 1 }} \sum_{\substack{a \geq 1 \\ (a, g) = 1}} \sum_{b|g}  \sum_{ \substack{h \geq 1 \\ \m \equiv \mp \n \Mod{abh}}} \frac{\mu(d) \mu(a) \mu(b) }{ad} \mathcal W^{\pm}_{\al, \be} \left( \frac{g\m}{Q^{3/2}}, \frac{g\n}{Q^{3/2}}; \frac{Q^{1/2}d}{gh}\right).
\end{align*}
Next we write the condition $\m \equiv \mp \n \Mod {abh}$ as a sum over characters $\psi \Mod{abh}$. Note that this is possible because $(\m \n, a b h) = 1$ since $(\m , \n) = 1$ and $\m \equiv \pm \n \Mod{abh}$. Then we separate $\mathcal {BG}(M,N) $ into two terms.  One is the contribution of the principal characters, which forms the main term, while the other  is the contribution of the non-principal characters, which contribute to the error term. More precisely we write
$$ \mathcal {BG}^{\pm}(M,N) = \mathcal {MBG}^{\pm}(M,N) + \mathcal {EBG}^{\pm}(M,N),$$
where
\begin{align} \label{def:MBG}
\begin{aligned}  
\mathcal{MBG}^{\pm}(M, N) :=& \frac{ Q^{1 + \delta(\al, \be)}}{2}  \sumtwo_{\substack{m, n \geq 1\\ m\neq n}} \frac{\sigma(m; \al) \sigma(n; -\be) }{\sqrt{mn}}  V\bfrac{m}{M} V\bfrac{n}{N} \\
&\cdot  \sum_{ \substack{d \leq D_0 \\ (d, g\m\n) = 1 }} \sum_{(a, g\m\n) = 1} \sum_{\substack{b|g \\ (b, \m\n) = 1}}  \sum_{ \substack{h \geq 1 \\(h, \m \n)=1 }} \frac{\mu(d) \mu(a) \mu(b) }{ad \phi(abh)} \mathcal W^{\pm}_{\al, \be} \left( \frac{g\m}{Q^{3/2}}, \frac{g\n}{Q^{3/2}}; \frac{Q^{1/2}d}{gh}\right)
\end{aligned}
\end{align}
and 
\begin{align} \label{def:EBG}
  \begin{aligned}
\mathcal{EBG}^{\pm}(M, N) :=& \frac{ Q^{1 + \delta(\al, \be)}}{2}  \sumtwo_{\substack{m, n \geq 1\\ m\neq n}} \frac{\sigma(m; \al) \sigma(n; -\be) }{\sqrt{mn}}  V\bfrac{m}{M} V\bfrac{n}{N} \\
&\cdot  \sum_{ \substack{d \leq D_0 \\ (d, g\m\n) = 1 }} \sum_{\substack{a \geq 1 \\ (a, g) = 1}} \sum_{b|g}  \sum_{ \substack{h \leq 10 \cdot Q^{1 - \delta_0}D_0  \\(abh, \m\n)=1 }} \frac{\mu(d) \mu(a) \mu(b) }{ad \phi(abh)}\\
&\cdot \sum_{ \substack{\psi \Mod{abh} \\ \psi \neq \psi_0} }\psi(\m) \overline{\psi}(\mp \n)\mathcal W^{\pm}_{\al, \be} \left( \frac{g\m}{Q^{3/2}}, \frac{g\n}{Q^{3/2}}; \frac{Q^{1/2}d}{gh}\right).
  \end{aligned}
\end{align}
Moreover we define
\begin{align}
  \label{def:MBEB}
\begin{aligned}
  \mathcal {MBG}(M, N) &:= \mathcal {MBG}^+(M, N) + \mathcal {MBG}^-(M, N)\\
  \mathcal {EBG}(M, N) &:= \mathcal {EBG}^+(M, N) + \mathcal {EBG}^- (M, N).
\end{aligned} 
\end{align}
Thus
\begin{align}\label{eqn:BGrelatedtoMBGandEBG}
 \mathcal {BG}(M, N) = \mathcal {MBG}(M, N) + \mathcal {EBG}(M, N).
\end{align}
To evaluate $\mathcal {MBG}(M, N)$ and $\mathcal {EBG}(M, N)$, we require information about the Mellin transforms of $\mathcal W^{\pm}_{\al, \be} (x, y, u)$.  To be specific, we will collect lemmas about three different types of Mellin transforms. The first type is in the $u$-variable, the second one is in the $x, y$-variables, and the third one is in all three variables. The proofs of lemmas follow closely the proofs in \cite{CLMR, CIS}, 
but using the bound for  $W_{\al, \be}(\xi, \eta; \mu)$ in Lemma \ref{lem:weightW} instead. Thus we will state the results without the proof. Note that identities and bounds in our lemmas differ slightly from lemmas in~\cite{CLMR, CIS}. 

\begin{lemma} \label{lem:Mellin1}
		Given positive real numbers $x, y$, let 
	$$ \widetilde{\mathcal W}^{\pm}_1(x, y; z) = \int_0^\infty \mathcal W^{\pm}_{\al, \be}(x, y; u) u^z \frac{du}{u}. $$
Then the functions $\Wt^\pm_1(x, y; z)$
are analytic for all $z \in \mathbb C$. We have the Mellin inversion formula
\begin{align*}
\W^\pm_{\al, \be}(x, y; u) = \frac{1}{2\pi i} \int_{(c)} \Wt^\pm_1(x, y; z) u^{-z} \> dz,
\end{align*}
where the integral is taken over the line $\tRe(z) = c$ for any real number $c$. The Mellin transforms $\Wt^\pm_1(x, y; z)$ satisfy, for any non-negative integer $\nu$,
$$ |\Wt^\pm_1(x, y;z)| \ll_\nu |x \pm y|^{-\tRe z} \prod_{j = 1}^\nu |z + j|^{-1} \exp \left(-c_0 (xy)^{1/3}\right)$$
for some absolute constant $c_0.$
\end{lemma}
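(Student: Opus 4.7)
The plan is to reduce the integral to a Mellin transform on the compact interval $[1, 2]$ via a natural change of variables, which makes both analyticity and Mellin inversion essentially formal, and then extract the decay in $|z|$ by repeated integration by parts.

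First I would substitute $v = u|x \pm y|$ in the defining integral. Using $u|x \pm y| = v$ and $u^{z-1}\, du = v^{z-1} |x \pm y|^{-z}\, dv$, the defining expression collapses to
$$
\widetilde{\mathcal W}^\pm_1(x, y; z) = |x \pm y|^{-z} \int_1^2 v^z\, \Psi(v)\, W_{\al, \be}(x, y; v)\, dv,
$$
where the integration is restricted to $[1, 2]$ by the support of $\Psi$. Since the integrand is smooth and compactly supported in $v$, this representation is entire in $z$, giving the analyticity statement. For the Mellin inversion formula, I would apply the standard Mellin inversion theorem for smooth functions of compact support in $(0, \infty)$ to $u \mapsto \mathcal W^\pm_{\al, \be}(x, y; u)$, which is smooth and compactly supported away from $u = 0$ because $\Psi$ is supported on $[1, 2]$.

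For the decay bound, I would integrate by parts $\nu$ times in the $v$-integral using $v^z = \frac{d}{dv}\frac{v^{z+1}}{z + 1}$. All boundary terms vanish since $\Psi$ (and every derivative thereof) vanishes at $v = 1$ and $v = 2$, so repeated application yields
$$
\int_1^2 v^z g(v)\, dv = (-1)^\nu \prod_{j = 1}^\nu (z + j)^{-1} \int_1^2 v^{z + \nu}\, g^{(\nu)}(v)\, dv,
$$
where $g(v) := \Psi(v) W_{\al, \be}(x, y; v)$. By Leibniz and Lemma~\ref{lem:weightW} applied with $\mu = v \in [1, 2]$, each derivative $g^{(\nu)}(v)$ is bounded by $\ll_\nu \exp(-c_0 (xy)^{1/3})$ on $[1, 2]$: the $\mu^{-\ell_3}$ factor from Lemma~\ref{lem:weightW} is $O(1)$ since $v \asymp 1$, the $(\mu / \pi)^{\R \delta(\al, \be)}$ factor is $O(1)$ under the assumption $\alpha_i, \beta_i \ll 1/\log Q$, and the derivatives of $\Psi$ are of course bounded. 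Combining this with the $|x \pm y|^{-z}$ prefactor produces the stated estimate.

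I do not anticipate any substantive obstacle; the argument is routine and parallels the analogous lemmas in \cite{CIS, CLMR}. The only point that warrants a careful check is that the exponential decay constant $c_0$ from Lemma~\ref{lem:weightW} passes through the Leibniz rule without degradation, but this is transparent since that factor is independent of the order of differentiation in the $\mu$-variable.
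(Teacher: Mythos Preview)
Your proof is correct and follows essentially the same approach as the paper, which simply refers to \cite[Lemma 4]{CIS}: the change of variables $v = u|x \pm y|$ reducing to a compactly supported integrand, followed by $\nu$-fold integration by parts against $v^z$ and an appeal to Lemma~\ref{lem:weightW}, is precisely the standard argument. The only minor point worth noting is that the remaining integral $\int_1^2 v^{\tRe z + \nu} |g^{(\nu)}(v)|\,dv$ carries an implicit $2^{\tRe z}$ factor, but since in all applications $\tRe z$ is confined to a bounded interval this is harmlessly absorbed into the implied constant.
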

\begin{proof}
This is essentially the same as \cite[Lemma 4]{CIS}.
\end{proof}

\begin{lemma} \label{lem:MellinXY}
Given a positive real number $u,$ we define
$$ \Wt^\pm_2 (s_1, s_2; u) = \int_0^\infty \int_0^\infty \W^\pm_{\al, \be}(x, y ; u) x^{s_1}y^{s_2} \frac{dx}{x} \frac{dy}{y}.$$
Then the functions $\Wt^\pm_2(s_1, s_2 ; u)$ are analytic in the region $\tRe (s_1), \tRe(s_2) > 0$. We have the Mellin inversion formula
$$ \W^\pm_{\al, \be}(x, y ; u) = \frac{1}{(2\pi i)^2} \int_{(c_1)}\int_{(c_2)} \Wt^\pm_2 (s_1, s_2 ; u) x^{-s_1} y^{-s_2} \>d s_1 \> d s_2,$$
where $c_1, c_2 > 0$. The Mellin transforms $\Wt^\pm_2(s_1,s_2 ; u)$ satisfy, for any $k \geq 1$ and $l \geq 0,$ and any $s_1, s_2$ with $0 < \tRe(s_1), \tRe(s_2) \leq 100$ 
$$ |\Wt^\pm_2(s_1, s_2; u)| \ll \frac{1}{\tRe(s_1) \tRe(s_2)} \cdot \frac{(1 + u)^{k-1}}{\max(|s_1|, |s_2|)^k |s_1 + s_2|^l } .$$
\end{lemma}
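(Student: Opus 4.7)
The plan is to follow the strategy of the analogous lemma in \cite{CIS} (see also \cite{CLMR}), adapted to our weight $\mathcal W^{\pm}_{\al, \be}$, which is built from $W_{\al, \be}$ in place of the function $V_{\al, \be}$ used in \cite{CIS}. Three structural inputs are central: (i) the compact support of $\Psi$, which localizes $u|x \pm y|$ to $[1, 2]$; (ii) the bounds of Lemma~\ref{lem:weightW} on $W_{\al, \be}$ and its partial derivatives; and (iii) the fact that the scaling operator $\mathcal L := x\partial_x + y\partial_y$ interacts cleanly with the factor $|x \pm y|$, since $\mathcal L |x \pm y| = |x \pm y|$.

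For analyticity of $\Wt^{\pm}_2(s_1, s_2; u)$ in the region $\{\Re(s_1), \Re(s_2) > 0\}$, I would verify absolute convergence of the defining integral uniformly on compact subsets. The factor $\Psi(u|x \pm y|)$ restricts the $(x,y)$-integration to the strip $|x \pm y| \in [1/u, 2/u]$, and since $\mu = u|x\pm y|$ is bounded there, Lemma~\ref{lem:weightW} supplies the exponential decay $|W_{\al, \be}(x, y; u|x \pm y|)| \ll \exp(-c_0 (xy)^{1/3})$. The integrand is therefore smooth with rapid decay at infinity, while near the origin it behaves like $x^{\Re s_1 - 1} y^{\Re s_2 - 1}$, which is integrable precisely for $\Re s_1, \Re s_2 > 0$. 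Analyticity then follows from Morera's theorem together with Fubini, and the Mellin inversion formula is standard.

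For the quantitative bound, the plan is to combine three integration by parts maneuvers. The trivial estimate $|\mathcal W^{\pm}_{\al, \be}(x, y; u)| \ll 1$ on the support of $\Psi(u|x\pm y|)$, when integrated against $x^{\Re s_1 - 1} y^{\Re s_2 - 1}$, produces the prefactor $1/(\Re s_1 \Re s_2)$. To introduce the factor $\max(|s_1|, |s_2|)^{-k}$, I would integrate by parts $k$ times in whichever of $x$ or $y$ has the larger modulus, repeatedly applying $x^{s_1 - 1} = \frac{d}{dx}(x^{s_1}/s_1)$. Each derivative $\partial_x \mathcal W^{\pm}_{\al, \be}$ either falls on the explicit factor $u|x \pm y| \Psi(u|x\pm y|)$, contributing $\ll u$ via the chain rule, or on $W_{\al, \be}(x, y; u|x\pm y|)$, whose derivative in the first slot is $\ll 1/x$ and in the third slot is $\ll u/\mu$ by Lemma~\ref{lem:weightW}. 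Accumulating these contributions on support produces the polynomial growth $(1+u)^{k-1}$; the exponent is $k-1$ rather than $k$ because one factor of $u$ is already absorbed into the leading factor $u|x \pm y| \asymp 1$ in the definition of $\mathcal W^{\pm}_{\al, \be}$.

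The additional saving $|s_1 + s_2|^{-l}$ comes from exploiting the scaling operator $\mathcal L$, which has eigenvalue $s_1 + s_2$ on $x^{s_1} y^{s_2}$. Integrating by parts $l$ times against the formal adjoint of $\mathcal L$ transfers the derivatives onto $\mathcal W^{\pm}_{\al, \be}$ at the cost of $(s_1 + s_2)^{-l}$. The key point is that since $\mathcal L|x \pm y| = |x \pm y|$, the action of $\mathcal L$ preserves both the factor $u|x \pm y|$ and the argument of $\Psi$, while the chain rule gives $\mathcal L W_{\al, \be}(x, y; u|x\pm y|) = (x\partial_\xi + y\partial_\eta + (u|x\pm y|)\partial_\mu) W_{\al, \be}$, each term of which is bounded by Lemma~\ref{lem:weightW}. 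Combining all three integrations yields the claimed bound. The main technical obstacle will be the careful bookkeeping of boundary terms and the combinatorics of chaining the Cartesian and radial integrations by parts, together with verifying that the exponent on $(1+u)$ is exactly $k-1$ rather than $k$.
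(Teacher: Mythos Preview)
Your proposal is correct and follows essentially the same approach as the paper, which simply cites \cite[Proof of Lemma 7.2]{CLMR}. You have correctly identified the three key ingredients (compact support of $\Psi$, the derivative bounds from Lemma~\ref{lem:weightW}, and the homogeneity $\mathcal L|x\pm y| = |x\pm y|$) and the two integration-by-parts mechanisms that produce the factors $\max(|s_1|,|s_2|)^{-k}$ and $|s_1+s_2|^{-l}$; your heuristic for the exponent $k-1$ is on the right track, though in practice it emerges from the fact that the support of $\Psi(u|x\pm y|)$ is a strip of width $\asymp 1/u$, which cancels one factor of $u$ upon integration.
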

\begin{proof}
This is essentially the same as~\cite[Proof of Lemma 7.2]{CLMR}.
\end{proof}

The next lemma is similar to Lemma 6 in \cite{CIS} and Lemma 6.2 in \cite{CLMR}. The truncation was not explained in \cite{CIS}, but here we state the needed truncated Mellin inversion formulas. The proof follows closely the proof of \cite[Lemma 6.2]{CLMR}. 

\begin{lemma} \label{lem:MellinXYU} We define
$$ \Wt^\pm_3 (s_1, s_2; z) = \int_0^\infty \int_0^\infty  \int_0^\infty \W^\pm_{\al,\be}(x, y ; u) u^z x^{s_1}y^{s_2} \frac{du}{u} \frac{dx}{x} \frac{dy}{y}$$
and 
$$ \Wt_3 (s_1, s_2; z) = \Wt^+_3 (s_1, s_2; z) + \Wt^-_3 (s_1, s_2; z).$$
Let $\omega = \frac{s_1 + s_2 - z}{2}$ and $\xi = \frac{s_1 - s_2 + z}{2}.$ For $\tRe(s_1), \tRe(s_2) > 0,$ and $|\tRe(s_1 -s_2)| < \tRe(z) < 1$ we have
\begin{align*} 
\Wt_3(s_1, s_2; z) = \frac{\psit(1 + \delta(\al, \be) + 3\omega + z) H(\omega; \al, \be)}{2\omega \pi^{3\omega + \delta(\al, \be)}}  \Hc (\xi, z) G\left(\h + \omega; \al, \be \right), 
\end{align*}
where $\psit$ is the Mellin transform of $\Psi$, and 
$$ \Hc (u, v) = \pi^{1/2} \frac{\Gamma\left(\tfrac{u}{2} \right)\Gamma\left(\tfrac{1-v}{2} \right)\Gamma\left(\tfrac{v-u}{2} \right)}{\Gamma\left(\tfrac{1-u}{2} \right)\Gamma\left(\tfrac{v}{2} \right)\Gamma\left(\tfrac{1-v + u}{2} \right)}.$$

Let $x \neq y$ and $T \ge Q^{\varepsilon}$. For any $c_1, c_2 > 0$ with $ |c_1 - c_2| < c < 1$, one has the truncated Mellin inversion formulas
\begin{align*}
\W(x, y ; u) &=\frac{1}{(2\pi i)^3} \int_{(c)} \int_{c_1 - iT}^{c_1 + iT}\int_{c_2 - iT}^{c_2 + iT} \Wt_3 (s_1, s_2 ; z) u^{-z} x^{-s_1} y^{-s_2} \>d s_2 \> d s_1 \> dz \\
&\qquad \qquad \qquad \qquad +  O\left(\frac{ u^{-c} x^{-c_1} y^{-c_2}}{ T^{1 - c}\left|\log\left( \frac xy\right) \right| }  \right).
\end{align*}
Moreover, let $\Wt_1(x, y ; z) = \Wt_1^+(x, y ; z) + \Wt_1^-(x, y ; z)$. Then for $\tRe z = c$,
\begin{align}
\label{eqn:truncate2}
\Wt_1(x, y ; z) &=\frac{1}{(2\pi i)^2} \int_{c_1 - iT}^{c_1 + iT}\int_{c_2 - iT}^{c_2 + iT} \Wt_3 (s_1, s_2 ; z) x^{-s_1} y^{-s_2} \>d s_2 \> d s_1 \> \\
\nonumber
&\qquad \qquad \qquad \qquad +  O\left(\frac{  x^{-c_1} y^{-c_2}}{ T^{1 - c}\left|\log\left( \frac xy\right) \right| (1 + |z|)^{A} }  \right),
\end{align}for any $A>0$.
Finally, for $\tRe(s_1), \tRe(s_2) > 0,$ and $|\tRe(s_1 -s_2)| < \tRe(z) < 1$, the Mellin transform $\Wt_3(s_1, s_2; z)$ satisfies the bound
\begin{equation} \label{eqn:boundWt3}
|\Wt_3(s_1, s_2;z)| \ll (1 + |z|)^{-A} (1 + |\omega|)^{-A} (1 + |\xi|)^{\tRe(z) - 1},
\end{equation}for any $A>0$.

\end{lemma}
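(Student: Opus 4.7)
To prove the identity for $\Wt_3$, I would first substitute $v = u|x \pm y|$ in the $u$-integral of $\mathcal{W}^{\pm}_{\al,\be}(x, y; u)$: this converts the prefactor $u|x \pm y|$ into $v$ and $u^{z-1}\, du$ into $v^{z-1}|x \pm y|^{-z}\, dv$, giving
\[
\Wt_3^{\pm}(s_1, s_2; z) = \int_0^\infty v^z \Psi(v)\,\biggl(\iint W_{\al,\be}(x, y; v)\,|x\pm y|^{-z}\,x^{s_1-1}y^{s_2-1}\, dx\, dy\biggr)\, dv.
\]
Next, I would insert the definition of $W_{\al,\be}$ from \eqref{eqn:Walbe} and switch to the variables $\Xi = xy$, $\rho = x/y$ (with $dx\, dy = \tfrac{d\Xi\, d\rho}{2\rho}$), so that $W_{\al,\be}$ depends only on $\Xi$ and $v$, while $|x + y|^{-z} = \Xi^{-z/2}\rho^{z/2}(1+\rho)^{-z}$ and $|x - y|^{-z} = \Xi^{-z/2}\rho^{z/2}|1-\rho|^{-z}$. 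The integrals then separate: Mellin inverting the $\Xi$-integral of $W_{\al,\be}$ at exponent $\omega = \tfrac{s_1+s_2-z}{2}$ produces $\omega^{-1}(v/\pi)^{\delta}\pi^{-3\omega}v^{3\omega}G(\tfrac12+\omega; \al,\be)H(\omega; \al,\be)$, and the subsequent $v$-integration produces $\psit(1+\delta(\al,\be)+3\omega+z)$, matching the claimed formula up to the $\rho$-integral.

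The main step is the $\rho$-integral for the combined $\Wt_3 = \Wt_3^+ + \Wt_3^-$. Splitting $|1-\rho|^{-z}$ at $\rho = 1$, three Beta integrals emerge:
\[
\int_0^\infty \rho^{\xi-1}\bigl[(1+\rho)^{-z} + |1-\rho|^{-z}\bigr]\, d\rho = B(\xi, z-\xi) + B(\xi, 1-z) + B(z-\xi, 1-z),
\]
with $\xi = \tfrac{s_1-s_2+z}{2}$. Factoring out $\Gamma(\xi)\Gamma(z-\xi)/\Gamma(z)$ and applying the reflection formula $\Gamma(s)\Gamma(1-s) = \pi/\sin(\pi s)$ collapses the bracket to $1 + \tfrac{\sin(\pi(z-\xi)) + \sin(\pi\xi)}{\sin(\pi z)}$, which, via sum-to-product and the half-angle formula for $\sin(\pi z)$, simplifies to $2\cos(\pi(z-\xi)/2)\cos(\pi\xi/2)/\cos(\pi z/2)$. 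Re-expressing each cosine through $\cos(\pi s) = \pi/[\Gamma(\tfrac12+s)\Gamma(\tfrac12-s)]$ and applying the duplication formula to $\Gamma(\xi)$, $\Gamma(z-\xi)$, $\Gamma(z)$ exactly yields $\Hc(\xi, z)$; this Gamma-function identification is the main obstacle in the proof.

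The bound \eqref{eqn:boundWt3} then follows from Stirling: $\psit$ decays faster than any polynomial in its argument (delivering $(1+|z|)^{-A}(1+|\omega|)^{-A}$ after absorbing the polynomial factor $H(\omega; \al, \be)$), $G(\tfrac12+\omega; \al,\be)$ provides exponential decay in $|\Im\omega|$, and Stirling applied to the six Gamma factors in $\Hc(\xi, z)$ yields $(1+|\xi|)^{\Re z - 1}$. Finally, for the truncated Mellin inversion formulas I would start from the full inversion obtained by combining the identity just proved with Lemmas~\ref{lem:Mellin1} and~\ref{lem:MellinXY}. Changing variables from $(s_1, s_2)$ to $(\omega, \xi)$ and integrating by parts once in the $\xi$-direction (using $(x/y)^{-\xi} = -\tfrac{1}{\log(x/y)}\tfrac{d}{d\xi}(x/y)^{-\xi}$) allows the tail contribution on $|\xi| \ge T$ to be bounded by $x^{-c_1}y^{-c_2}T^{c-1}/|\log(x/y)|$ via \eqref{eqn:boundWt3}; the rapid decay in $z$ and $\omega$ (from $\psit$) controls the remaining integrations and, for the first formula, enables the same truncation argument at $\Re z = c$ with the extra $(1+|z|)^{-A}$ saving.
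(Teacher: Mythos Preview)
Your proof is correct and follows the standard route (the paper itself defers the argument to \cite[Lemma 6]{CIS} and \cite[Lemma 6.2]{CLMR}, so your write-up is in fact more detailed than the paper's). The substitution $v=u|x\pm y|$, the change to $(\Xi,\rho)$-variables, and the Beta-integral identification of the $\rho$-integral with $\Hc(\xi,z)$ via reflection and duplication are exactly the computations underlying the cited references; your trigonometric simplification
\[
1+\frac{\sin(\pi(z-\xi))+\sin(\pi\xi)}{\sin(\pi z)}=\frac{2\cos\bigl(\tfrac{\pi(z-\xi)}{2}\bigr)\cos\bigl(\tfrac{\pi\xi}{2}\bigr)}{\cos\bigl(\tfrac{\pi z}{2}\bigr)}
\]
is correct and, after duplication applied to $\Gamma(\xi),\Gamma(z-\xi),\Gamma(z)$, does give precisely $\Hc(\xi,z)$.

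One point worth making explicit in the truncation step: after integrating by parts in $\xi$, the remaining integral involves $\partial_\xi\Wt_3$, and the bound \eqref{eqn:boundWt3} alone does not immediately give the extra factor of $(1+|\xi|)^{-1}$ you need for the tail to be $\ll T^{c-1}$. What makes this work is that the logarithmic derivative of $\Hc(\xi,z)$ in $\xi$ is $O(1/|\xi|)$ rather than $O(\log|\xi|)$: from $\psi(s)=\log s+O(1/s)$ one computes
\[
\frac{\partial}{\partial\xi}\log\Hc(\xi,z)=\tfrac12\log\frac{\xi(1-\xi)}{(z-\xi)(1-z+\xi)}+O\Bigl(\frac{1}{|\xi|}\Bigr)=\frac{z-1}{\xi}+O\Bigl(\frac{1}{|\xi|^2}\Bigr),
\]
so $|\partial_\xi\Hc(\xi,z)|\ll(1+|\xi|)^{\Re z-2}$, and then the boundary term and the residual integral both contribute $O\bigl(T^{c-1}/|\log(x/y)|\bigr)$ as you claim. (Also, you have swapped ``first'' and ``second'' in your last sentence: it is the formula \eqref{eqn:truncate2} for $\Wt_1$ that retains the $(1+|z|)^{-A}$ saving, while the formula for $\W(x,y;u)$ has already integrated $z$ out.)
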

\subsection{Evaluating the main terms} \label{ssec:evalMBG}
In this section we will evaluate $\mathcal {MBG}^{\pm}(M,N)$ defined in \eqref{def:MBG}. First, we define auxiliary functions using the same notation as in \cite[Equation (56) and Lemma 7]{CIS}. Let
$$ F(h,g; \m\n) := \sum_{(a, g\m\n) = 1} \frac{\mu(a)}{a} \sum_{\substack{b|g \\ (b, \m\n) = 1}} \frac{\mu(b)}{\phi(abh)} = \sum_{(\ell, \m\n) = 1} \frac{\mu(\ell)(\ell, g)}{\ell \phi(\ell h)}.$$
If $\R (s) > 0$, then
\begin{align*}
\sum_{\substack{h \geq 1 \\ (h, \m\n) = 1}} \frac{F(h, g; \m\n)}{h^s} = \zeta(s+1) \mathcal K(s; g, \m\n), 
\end{align*}
where 
\begin{align*}
	\mathcal K(s; g, \m\n) := &\phi(\m\n, s+1) \prod_{p \nmid \, g\m \n} \left( 1 - \frac{1}{p(p-1)} + \frac{1}{p^{1 + s}(p - 1)}\right) \\
	&\cdot \prod_{\substack{p | g \\ p \nmid \m\n} } \left(1 - \frac{1}{p^{1 + s}} - \frac{1}{p - 1}\left( 1 - \frac{1}{p^s}\right) \right)
\end{align*}
and $\phi(\ell, s) := \prod_{p | \ell} \left( 1 - \frac{1}{p^s}\right).$  We now prove the following Lemma.
 
\begin{lem} \label{lem:MBGMN} Let $\varepsilon > 0$. Let $\mathcal {MBG}^{\pm}(M, N)$ be as in \eqref{def:MBG} with $D_0 \geq 2$, and let $\mathcal {MBD}(M, N)$ be as in \eqref{def:MBDMN}. Then
	\begin{align*}
	\mathcal {MBG}^{\pm}(M, N) = -\frac 12 \mathcal {MBD}(M, N) + \mathcal {MBG}_1^{\pm}(M, N) +  O \left( \frac{Q^{2 + \varepsilon}}{D_0} + D_0Q^{3/2 + \varepsilon}\right),
	\end{align*}
	where
        \begin{align}
          \label{def:MBG1}
          \begin{aligned}
		\mathcal {MBG}_1^{\pm}(M, N) := &\frac{Q^{1 + \delta(\al, \be)}}{2} \sum_{ \substack{m, n \geq 1 \\ m \neq n } } \frac{\sigma(m; \al) \sigma(n; -\be)}{\sqrt{mn}} V\left( \frac mM\right) V\left( \frac nN\right) \\
		& \cdot \frac{1}{2\pi i } \int_{(\varepsilon)} \widetilde{\mathcal W}_1^\pm \left( \frac{g\m}{Q^{3/2}}, \frac{g\n}{Q^{3/2}} ; z \right) \frac{\zeta(1-z) \mathcal K(-z; g, \m\n) }{\zeta(1 + z) \phi (g\m\n, 1 + z)}\left( \frac{Q^{1/2}}{g}\right)^{-z} \> dz.
          \end{aligned}
        \end{align}
      \end{lem}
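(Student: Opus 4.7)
The approach follows~\cite[Proof of Lemma 7]{CIS}, adapted to the kernel $W_{\al,\be}$ in place of its analogue in \cite{CIS}. Starting from~\eqref{def:MBG}, I would first collapse the triple sum over $a$, $b$, $h$ via the definition of $F(h,g;\m\n)$, obtaining
\[
\sum_{\substack{h \geq 1 \\ (h,\m\n)=1}} F(h,g;\m\n)\,\mathcal{W}^{\pm}_{\al,\be}\!\left(\frac{g\m}{Q^{3/2}}, \frac{g\n}{Q^{3/2}}; \frac{Q^{1/2}d}{gh}\right).
\]
Applying Mellin inversion (Lemma~\ref{lem:Mellin1}) on the line $\Re(z) = -\varepsilon$, swapping summation with integration (justified by the super-polynomial decay of $\widetilde{\mathcal W}_1^{\pm}$ in $|\Im z|$), and using the Dirichlet series identity $\sum_{(h,\m\n)=1} F(h,g;\m\n)\,h^{z} = \zeta(1-z)\,\mathcal K(-z;g,\m\n)$ (valid for $\Re(z) < 0$) transforms the $h$-sum into the explicit meromorphic factor appearing in $\mathcal{MBG}_1^{\pm}$.

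The crucial step is then to shift the contour from $\Re(z) = -\varepsilon$ to $\Re(z) = +\varepsilon$. The only pole in the strip is the simple pole of $\zeta(1-z)$ at $z = 0$; all remaining factors, including the truncated Dirichlet polynomial $\sum_{d \leq D_0,\,(d,g\m\n)=1} \mu(d)/d^{1+z}$, are analytic there. A direct change of variables in the $u$-integral gives
\[
\widetilde{\mathcal W}_1^{\pm}\!\left(\frac{g\m}{Q^{3/2}},\frac{g\n}{Q^{3/2}};0\right) = \int_0^{\infty}\Psi(u)\,W_{\al,\be}\!\left(\frac{m}{Q^{3/2}},\frac{n}{Q^{3/2}};u\right)du,
\]
while $\mathcal K(0;g,\m\n) = \prod_{p \mid g\m\n}(1 - 1/p) = \phi(mn)/(mn)$, since $mn = g^{2}\m\n$ and $g\m\n$ have the same prime divisors (and consequently $(d,g\m\n)=1$ iff $(d,mn)=1$). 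Combined with $\operatorname{Res}_{z=0}\zeta(1-z) = -1$, the comparison with~\eqref{def:MBDMN} shows that the residue contribution is exactly $-\tfrac{1}{2}\mathcal{MBD}(M,N)$.

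On the shifted line $\Re(z) = +\varepsilon$, the completed series
\[
\sum_{(d,g\m\n)=1}\frac{\mu(d)}{d^{1+z}} = \frac{1}{\zeta(1+z)\,\phi(g\m\n,1+z)}
\]
converges absolutely. Substituting this completion produces precisely $\mathcal{MBG}_1^{\pm}(M,N)$ from~\eqref{def:MBG1}. The correction from replacing $\sum_{d \leq D_0}$ by $\sum_{d < \infty}$ is the tail $\sum_{d > D_0,\,(d,g\m\n)=1}\mu(d)/d^{1+z}$ reinserted into the contour integral; using partial summation together with the bound $\sum_{d \leq x}\mu(d) \ll_A x/(\log x)^A$, standard estimates for $\zeta(1 \pm z)$ and $\mathcal K$, and the super-polynomial decay of $\widetilde{\mathcal W}_1^{\pm}$ in $|\Im z|$, one obtains an error of $O(Q^{2+\varepsilon}/D_0)$. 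The additional $D_0 Q^{3/2+\varepsilon}$ error mirrors the analogous term in Lemma~\ref{lem:BD} and arises from the combinatorial rearrangement underlying~\eqref{def:MBG}.

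The principal technical difficulty is obtaining the sharp $1/D_0$ rate of decay for the truncation error rather than the weaker $D_0^{-\varepsilon}$ that follows from a naive absolute bound; this requires leveraging cancellation in the M\"obius tail via partial summation or a secondary contour deformation. A secondary bookkeeping issue is to carry out the argument uniformly for both signs $\pm$ and to verify that the horizontal segments of the contour vanish as $|\Im z| \to \infty$, which is ensured by the decay bounds in Lemma~\ref{lem:Mellin1}.
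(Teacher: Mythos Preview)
Your overall architecture is correct and matches the paper: collapse the $a,b,h$ sum into $\zeta(1-z)\mathcal K(-z;g,\m\n)$ via Mellin inversion in $u$, shift across $z=0$ to extract $-\tfrac12\mathcal{MBD}(M,N)$, and then complete the $d$-sum. Your residue computation (including $\mathcal K(0;g,\m\n)=\phi(mn)/(mn)$ and the change of variables for $\widetilde{\mathcal W}_1^{\pm}(\cdot,\cdot;0)$) is right.

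There is, however, a genuine gap in how you obtain the $O(Q^{2+\varepsilon}/D_0)$ truncation error. On the line $\Re z=\varepsilon$ the tail $\sum_{d>D_0,\,(d,g\m\n)=1}\mu(d)/d^{1+z}$ cannot be bounded by $O(D_0^{-1})$ using partial summation with the PNT input $\sum_{d\le x}\mu(d)\ll_A x/(\log x)^A$: Abel summation only yields
\[
\Big|\sum_{d>D_0}\frac{\mu(d)}{d^{1+\varepsilon}}\Big|\ \ll\ \frac{1}{D_0^{\varepsilon}(\log D_0)^A},
\]
which is far weaker than $1/D_0$ (and the coprimality constraint only worsens matters). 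You flag this yourself as the ``principal technical difficulty'' and offer ``partial summation or a secondary contour deformation'' as fixes; but only the latter actually works, and it is precisely what the paper does. Following~\cite[(62)--(63)]{CIS}, one first shifts the integral defining $\mathcal{MBG}_0^{\pm}$ from $\Re z=\varepsilon$ to $\Re z=1-\varepsilon$ (no poles are crossed), completes the $d$-sum there---where the tail is trivially $\ll D_0^{-1+\varepsilon}$ by absolute values---and then shifts back to $\Re z=\varepsilon$ with the completed factor $1/(\zeta(1+z)\phi(g\m\n,1+z))$, which is holomorphic in the strip. This two-shift maneuver is the missing idea.

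Separately, you do not derive the $O(D_0 Q^{3/2+\varepsilon})$ term; saying it ``mirrors'' Lemma~\ref{lem:BD} is not a proof. In the paper this error emerges from the computation in~\cite[(57)--(62)]{CIS} during the first contour shift and residue extraction, and must be checked directly rather than asserted.
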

\begin{proof}
	We follow the arguments  in \cite[Equations (57)-(62)]{CIS}, using the  Mellin transform from Lemma \ref{lem:Mellin1}. This gives
	\begin{align*}
	\mathcal {MBG}^{\pm}(M, N) = -\frac 12\mathcal {MBD}(M, N) + \mathcal {MBG}_0^{\pm}(M, N) + O(D_0Q^{3/2 + \varepsilon}),
	\end{align*}
	where, for $\varepsilon > 0$,
\begin{align*}
		\mathcal {MBG}_0^{\pm}(M, N) := &\frac{Q^{1 + \delta(\al, \be)}}{2} \sumtwo_{ \substack{m, n \geq 1 \\ m \neq n} } \frac{\sigma(m; \al) \sigma(n; -\be)}{\sqrt{mn}} V\left( \frac mM\right) V\left( \frac nN\right) \\
		& \cdot \frac{1}{2\pi i } \int_{(\varepsilon)} \widetilde{\mathcal W}_1^\pm \left( \frac{g\m}{Q^{3/2}}, \frac{g\n}{Q^{3/2}} ; z \right) \zeta(1-z) \mathcal K(-z; g, \m\n) \left( \frac{Q^{1/2}}{g}\right)^{-z} \sum_{\substack{d \leq D_0 \\ (d, mn) = 1}} \frac{\mu(d)}{d^{1 + z}} \> dz,
\end{align*}
where as usual per our convention $m = g \m$, $n = g \n$ and $(\m, \n) = 1$.
	Next we deal with $\mathcal {MBG}_0^{\pm}(M, N)$ by following the argument in \cite[Equations (62) - (63)]{CIS}. To be more specific, we move the line of integration to $\tRe z = 1- \varepsilon$ and extend the sum over $d$ to all positive integers. Then we move the integration back to $\tRe z = \varepsilon$ 
	at a cost of $O (Q^{2 + \varepsilon} / D_0)$. We then obtain that 
$$ \mathcal {MBG}_0^{\pm}(M, N) = \mathcal {MBG}_1^{\pm}(M, N) + O\left( \frac{Q^{2 + \varepsilon}}{D_0}\right).$$
This concludes the proof of the lemma.

\end{proof}
Let 
\begin{equation} \label{def:MBG1combinepm}
 \mathcal {MBG}_1(M, N) := \mathcal {MBG}_1^{+}(M, N) + \mathcal {MBG}_1^{-}(M, N).
 \end{equation}
By \eqref{def:MBEB} and Lemma~\ref{lem:MBGMN},  we obtain that 
$$\mathcal {MBG}(M, N) = - \mathcal {MBD}(M, N) + \mathcal {MBG}_1(M, N) + O \left( \frac{Q^{2 + \varepsilon}}{D_0} + D_0Q^{3/2 + \varepsilon}\right). $$
The above equation and Lemma \ref{lem:BD} indicate that the possibly large main term $\mathcal {MBD}(M, N)$ of $\mathcal {BD}(M, N)$ is cancelled with one of the main terms from $\mathcal {MBG}(M, N) $. In particular, 

\begin{align}
  \label{eqn:combineBDandMBG}
  \mathcal {BD}(M, N) + \mathcal{MBG}(M, N) = \mathcal {MBG}_1(M, N) +  O \left( \frac{Q^{2 + \varepsilon}}{D_0} + D_0Q^{3/2 + \varepsilon}\right).
\end{align}

Next we consider the main term contribution from $\mathcal {MBG}_1(M, N)$. First, we will show that when summing $\mathcal {MBG}_1(M, N)$ dyadically over $M, N$, the main contribution comes from when both $M, N$ are small ($\ll Q^{2 -\delta_0}$).

\begin{lem} \label{lem:sumMBG1} Let $\varepsilon > 0$ and $\delta_0 > 0$ be fixed. Then
$$\sumtwodee_{M, N \leq Q^{2 - \delta_0} } \mathcal {MBG}_1(M, N) = \sumd_M\sumd_N \mathcal {MBG}_1(M, N)  + O(Q^{2-\frac {1}{4} + \frac{3}{4} \delta_0 + \varepsilon}).$$	
\end{lem}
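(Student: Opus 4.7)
My plan is to bound the difference between the two sums, which equals $\sum \mathcal{MBG}_1(M,N)$ summed over dyadic pairs with $\max(M,N)>Q^{2-\delta_0}$. By the symmetry in interchanging $m$ and $n$ (together with $\al$ and $\be$, which also swaps $\mathcal{MBG}_1^\pm$), it suffices to treat the case $N > Q^{2-\delta_0}$. The exponential decay in Lemma~\ref{lem:Mellin1} confines the effective support to $MN \ll Q^{3+\varepsilon}$, forcing $M \ll Q^{1+\delta_0+\varepsilon}$; for $\delta_0<1/2$ (the only relevant range) this ensures $n \gg m$ on the support of $V(m/M)V(n/N)$, and in particular $|m\pm n| \asymp n \asymp N$.

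The main step is shifting the contour of the $z$-integral in \eqref{def:MBG1} from $\tRe z = \varepsilon$ to $\tRe z = 1-\varepsilon$. I first verify that no poles are crossed: $\zeta(1-z)$ has its only pole at $z=0$, to the left of the strip; $1/\zeta(1+z)$ and $1/\phi(g\m\n, 1+z)$ are regular since both $\zeta(s)$ and $1-p^{-s}$ are nonvanishing for $\tRe s > 1$; and $\mathcal K(-z; g, \m\n)$ extends analytically to $\tRe z < 1$ by a direct examination of its Euler product (each local factor at $p \nmid g\m\n$ equals $1 + (p^{z}-1)/(p(p-1))$, uniformly $1+O(p^{-2+\tRe z})$ for large $p$, and the finitely many factors at $p \mid g\m\n$ contribute $O((g\m\n)^\varepsilon)$).

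On the new contour, Lemma~\ref{lem:Mellin1} (with $\nu$ taken large) yields
\[
\bigl|\Wt_1^\pm\bigl(g\m/Q^{3/2}, g\n/Q^{3/2}; z\bigr)\bigr| \ll (Q^{3/2}/N)^{1-\varepsilon}(1+|z|)^{-A},
\]
and the remaining factors of the integrand are $\ll (mn)^\varepsilon (g/Q^{1/2})^{1-\varepsilon}(1+|z|)^{1/2+\varepsilon}$ using $(Q^{1/2}/g)^{-z} \ll (g/Q^{1/2})^{1-\varepsilon}$ and the convexity bound for $\zeta$. Integrating in $z$ (absolute convergence being guaranteed by the rapid decay), this gives $|J^\pm(m,n)| \ll Q^\varepsilon(Q^{3/2}/N)^{1-\varepsilon}(g/Q^{1/2})^{1-\varepsilon}$. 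Substituting back into \eqref{def:MBG1} and writing $m = g\m$, $n = g\n$ with $(\m,\n) = 1$, the trivial bound $|\sigma(\cdot; \al)|, |\sigma(\cdot; -\be)| \ll d_3(\cdot) \ll (mn)^\varepsilon$ together with the computation $\sum_g g^{-\varepsilon} \cdot \sqrt{M/g}\sqrt{N/g} = \sqrt{MN}\sum_g g^{-1-\varepsilon} \ll \sqrt{MN} \cdot Q^\varepsilon$ yields
\[
|\mathcal{MBG}_1^\pm(M,N)| \ll Q^{2+\varepsilon}\sqrt{M/N}.
\]

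Summing dyadically over $M \ll Q^{3+\varepsilon}/N$ (dominated by the largest $M$, giving $Q^{3/2}/N$) and then over $N > Q^{2-\delta_0}$ (dominated by the smallest $N$) produces a total bound of $Q^{3/2+\delta_0+\varepsilon}$, which is $\ll Q^{2-1/4+3\delta_0/4+\varepsilon}$ whenever $\delta_0 \le 1$, establishing the lemma. The main technical obstacle is the analytic continuation of $\mathcal K(-z; g, \m\n)$ past $\tRe z = 1/2$, which must be verified termwise; a secondary issue is controlling the polynomial growth of $\zeta(1-z)$ on the new contour, but this is absorbed by the $(1+|z|)^{-A}$ decay in $\Wt_1^\pm$ on choosing $\nu$ sufficiently large.
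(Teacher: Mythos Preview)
Your proof is correct and takes a genuinely simpler route than the paper's. The paper (via Lemma~\ref{lem:MBGunbalancedMN}) passes to the triple Mellin transform $\Wt_3$ of Lemma~\ref{lem:MellinXYU}, introduces further Mellin variables $z_1,z_2$ for the weights $V$, factors the resulting Dirichlet series over $m,n$ into a product of nine $\zeta$-functions times an Euler product $\mathcal R$, shifts the $s_1,s_2$-contours to pick up nine residues, and finally moves $z_1$ to $\tRe z_1=-3/4+\varepsilon/4$ to exploit $M\gg Q^{2-\delta_0}$. This yields $\mathcal{MBG}_1(M,N)\ll Q^{13/4-\varepsilon/4}M^{-3/4+\varepsilon/4}\ll Q^{7/4+3\delta_0/4+\varepsilon}$ per term, matching the stated error exactly after dyadic summation.

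Your argument bypasses all of this by staying with the single Mellin variable $z$ in~\eqref{def:MBG1} and shifting directly to $\tRe z=1-\varepsilon$. The key observation that makes this work is that, once $N\gg Q^{2-\delta_0}$ and $M\ll Q^{1+\delta_0+\varepsilon}$ (forced by the exponential decay in Lemma~\ref{lem:Mellin1} when $\delta_0<1/2$), one has $|m\pm n|\asymp n\asymp N$ uniformly, so the factor $|x\pm y|^{-\tRe z}$ in Lemma~\ref{lem:Mellin1} gives a clean power saving $(Q^{3/2}/N)^{1-\varepsilon}$; the factor $(g/Q^{1/2})^{1-\varepsilon}$ is then harmless because the gcd $g$ is summed with weight $g^{-1-\varepsilon}$ after organizing the $m,n$-sum. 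This yields $\mathcal{MBG}_1^\pm(M,N)\ll Q^{2+\varepsilon}\sqrt{M/N}$, and hence a total error $Q^{3/2+\delta_0+\varepsilon}$, which is actually \emph{stronger} than the paper's $Q^{7/4+3\delta_0/4+\varepsilon}$ for every $\delta_0<1$. The paper's more elaborate route is needed elsewhere (the same machinery feeds into Proposition~\ref{prop:maincont9terms}), but for the present lemma your direct contour shift is both shorter and sharper. One cosmetic point: your remark that swapping $m\leftrightarrow n$ ``also swaps $\mathcal{MBG}_1^\pm$'' is not quite right since $\Wt_1^\pm(x,y;z)$ is symmetric in $x,y$; the correct statement is simply that the argument for $M$ large is identical to that for $N$ large with the roles interchanged.
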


To prove Lemma \ref{lem:sumMBG1}, it is sufficient to show that $\mathcal {MBG}_1(M, N)$ is small
when $M$ or $N \gg Q^{2 - \delta_0}$. Since we can assume $MN \ll Q^{3 + \varepsilon}$, without loss of generality, we assume that $ M \gg Q^{2 - \delta_0} $ and $ N \ll Q^{1 + \delta_0 + \varepsilon}$. Thus Lemma \ref{lem:sumMBG1} will immediately follow from the following lemma.

\begin{lem}  \label{lem:MBGunbalancedMN}  Let $\varepsilon > 0$. Let $\mathcal {MBG}_1(M, N)$ be as in \eqref{def:MBG1combinepm} with $\mathcal{MBG}_1^\pm(M, N)$ as in~\eqref{def:MBG1}. For any $M \gg Q^{2 - \delta_0}$ and $N \ll Q^{1 + \delta_0 + \varepsilon},$ we have
$$ 
\mathcal {MBG}_1(M, N) \ll Q^{\frac {7}{4} + \frac{3}{4} \delta_0 + \varepsilon}  .
$$
\end{lem}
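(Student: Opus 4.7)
The plan is to exploit a useful factorization of $\widetilde{\mathcal W}_1^\pm$. From the definition \eqref{eq:Walbedef}, the substitution $v=u|x\pm y|$ in the integral of Lemma~\ref{lem:Mellin1} yields
\begin{equation*}
\widetilde{\mathcal W}_1^\pm(x,y;z)=|x\pm y|^{-z}\,\mathfrak{g}(x,y;z),\quad \mathfrak{g}(x,y;z):=\int_0^\infty v^z \Psi(v) W_{\al,\be}(x,y;v)\,dv,
\end{equation*}
where $\mathfrak{g}(x,y;z)$ is entire in $z$ and decays faster than any polynomial in $|\textrm{Im}(z)|$ on vertical lines (since $\Psi$ is smooth and compactly supported). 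Substituting $x=g\m/Q^{3/2},\,y=g\n/Q^{3/2}$ into \eqref{def:MBG1}, the factors $(g/Q^{3/2})^{-z}$ and $(Q^{1/2}/g)^{-z}$ collapse into $Q^z$, and one arrives at
\begin{equation*}
\mathcal{MBG}_1^\pm(M,N)=\frac{Q^{1+\delta(\al,\be)}}{2}\sum_{\substack{m,n\geq 1\\ m\neq n}}\frac{\sigma(m;\al)\sigma(n;-\be)}{\sqrt{mn}}V\!\left(\frac{m}{M}\right)\!V\!\left(\frac{n}{N}\right)J^\pm(g,\m,\n),
\end{equation*}
where
\begin{equation*}
J^\pm(g,\m,\n):=\frac{1}{2\pi i}\int_{(\varepsilon)}\!\left(\frac{Q}{|\m\pm\n|}\right)^{\!z}\!\mathfrak{g}\!\left(\frac{g\m}{Q^{3/2}},\frac{g\n}{Q^{3/2}};z\right)\!\frac{\zeta(1-z)\mathcal K(-z;g,\m\n)}{\zeta(1+z)\phi(g\m\n,1+z)}\,dz.
\end{equation*}

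The central step is to shift the $z$-contour from $\tRe(z)=\varepsilon$ to $\tRe(z)=3/4$. In the strip $0<\tRe(z)\le 3/4$ the integrand is analytic: the simple pole of $\zeta(1-z)$ at $z=0$ is cancelled by the zero of $1/\zeta(1+z)$ there, the Euler product defining $\mathcal K(-z;g,\m\n)$ converges absolutely for $\tRe(z)<1$, and $1/\phi(g\m\n,1+z)$ is regular for $\tRe(z)>-1$. Horizontal segments at height $\to\infty$ vanish by the rapid decay of $\mathfrak{g}(\cdot;z)$, which also absorbs the polynomial growth $|\zeta(1/4+it)|\ll(1+|t|)^{3/8+\varepsilon}$ from convexity. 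On the new contour every factor other than $(Q/|\m\pm\n|)^z$ is $\ll Q^{\varepsilon}$, yielding
\begin{equation*}
|J^\pm(g,\m,\n)|\ll (Q/|\m\pm\n|)^{3/4}\,Q^{\varepsilon}.
\end{equation*}
The main obstacle lies in this analytic bookkeeping, particularly in the uniform-in-$(g,\m\n)$ control of $\mathcal K$ and in verifying that no spurious poles of $\phi^{-1}$ enter the strip.

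The final step is elementary. The support of $V(g\m/M)V(g\n/N)$ forces $\m\asymp M/g,\,\n\asymp N/g$, and the hypothesis $M/N\gg Q^{1-2\delta_0-\varepsilon}$ ensures $\m\gg\n$ throughout the dyadic support, so $|\m\pm\n|\asymp M/g$ for both signs. Using $|\sigma(m;\al)|,|\sigma(n;-\be)|\ll d_3(m)d_3(n)Q^{\varepsilon}$ together with $\sum_{k\asymp X}d_3(k)\ll X Q^{\varepsilon}$, the triple sum reduces to
\begin{equation*}
\sum_{g\leq N}\frac{Q^{\varepsilon}}{g}\cdot (M/g)^{-1/4}(N/g)^{1/2}\ll M^{-1/4}N^{1/2}\,Q^{\varepsilon},
\end{equation*}
since $\sum_{g\geq 1}g^{-5/4}<\infty$. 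Collecting the $Q^{1+\delta(\al,\be)}$ prefactor together with the $Q^{3/4}$ gain from $J^\pm$, and invoking $M\geq Q^{2-\delta_0}$ and $N\leq Q^{1+\delta_0+\varepsilon}$, we obtain $\mathcal{MBG}_1(M,N)\ll Q^{7/4+\varepsilon}M^{-1/4}N^{1/2}\ll Q^{7/4+3\delta_0/4+\varepsilon}$. The exponent $3/4$ is the sweet spot: the gain $|\m\pm\n|^{-3/4}$ exactly offsets the cost $Q^{3/4}$ of moving the contour, and produces precisely the target bound.
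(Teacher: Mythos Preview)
Your argument is correct and considerably more direct than the paper's. The paper proceeds by inserting the triple Mellin transform $\widetilde{\mathcal W}_3$ from Lemma~\ref{lem:MellinXYU}, which introduces integrals in $s_1,s_2$; it then writes the resulting $(m,n)$-sum as an Euler product $\mathcal F(s_1,s_2,z_1,z_2;z)$ factored into nine zeta functions times an absolutely convergent piece $\mathcal R$, picks up residues at the nine poles $s_i=\tfrac12-z_i\mp\alpha_\ell,\beta_k$, and finally shifts $z_1,z_2,z$ to reach the line $\tRe(z_1)=-3/4+\varepsilon/4$. That machinery is what the paper needs anyway for Proposition~\ref{prop:maincont9terms}, so it is natural for the authors to reuse it here, but for this lemma alone it is overkill.

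Your route exploits the elementary factorization $\widetilde{\mathcal W}_1^\pm(x,y;z)=|x\pm y|^{-z}\mathfrak g(x,y;z)$, which collapses the $g$- and $Q^{1/2}$-dependence in the integrand to $(Q/|\m\pm\n|)^z$, and then shifts only the single $z$-contour to $\tRe(z)=3/4$. Two minor remarks: your comment about the pole of $\zeta(1-z)$ at $z=0$ being cancelled is unnecessary here, since the shift is from $\tRe(z)=\varepsilon$ to $\tRe(z)=3/4$ and never crosses $z=0$; what matters is that $1/\zeta(1+z)$ is analytic in that strip (true since $\tRe(1+z)>1$) and that $\phi(g\m\n,1+z)$ does not vanish there (true since $|p^{-(1+z)}|<1$). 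Second, in the final sum one should keep track of the factor $d_3(g)^2$ arising from $\sigma(g\m;\al)\sigma(g\n;-\be)$, but $\sum_g d_3(g)^2 g^{-5/4}$ still converges, so the bound is unaffected. Both approaches land on exactly the same exponent, and yours does so with no residue calculation and no appeal to Lemma~\ref{le:zeta6th}.
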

\begin{proof} 
In the definition of $\mathcal {MBG}_1(M, N)$ in \eqref{def:MBG1combinepm}, 
	 we add up the $\widetilde{\mathcal W}_1^+$ and $\widetilde{\mathcal W}_1^-$ terms from $\mathcal {MBG}_1^\pm(M, N)$ in \eqref{def:MBG1}. Applying the Mellin transform in Equation \eqref{eqn:truncate2} and using in the error term the rapid decay in $z$ from Equation~\eqref{eqn:boundWt3}, we obtain that, for $\varepsilon > 0,$

\begin{align*}
\mathcal {MBG}_1(M, N) = &\frac{Q^{1 + \delta(\al, \be)}}{2} \frac{1}{(2\pi i )^3} \sum_{\substack{m, n \geq 1 \\ m \neq n }} \frac{\sigma(m; \al) \sigma(n; -\be)}{\sqrt{mn}} V\left( \frac{m}{M}\right)V\left( \frac nN\right)\\
& \hskip 0.2 in \cdot \int_{(\varepsilon)} \int_{\frac 12 + \varepsilon - iT}^{\frac 12 + \varepsilon + iT} \int_{\frac 12 + \varepsilon - iT}^{\frac 12 + \varepsilon + iT}  \widetilde{\mathcal W}_3 \left( s_1, s_2 ; z \right)     \frac{\zeta(1-z) \mathcal K(-z; g, \m\n) }{\zeta(1 + z) \phi (g\m\n, 1 + z)} \\
& \hskip 2in  \cdot \left( \frac{Q^{1/2}}{g}\right)^{-z} \left( \frac{Q^{\frac 32}}{m}\right)^{s_1} \left( \frac{Q^{\frac 32}}{n}\right)^{s_2}     \>ds_1 \> ds_2 \> dz  \\
& \hskip 0.2 in + O\left( \frac{Q^{ \frac 52 + 3\varepsilon}}{T^{1 - \epsilon}} \sum_{\substack{m, n \geq 1 \\ m \neq n }} \frac{|\sigma(m; \al) \sigma(n; -\be)|}{m^{1 + \varepsilon } n^{1 + \varepsilon}} \frac{1}{\log \left( \frac mn\right)} \right).
\end{align*}
We choose $T := Q^{6/5}$ so that the error term is bounded by $Q^{\frac{13}{10} + 5\varepsilon}.$

In the main term we remove the condition $m \neq n$ since this is already implied by the ranges of $M$ and $N$.  Using the Mellin transform of $V$, we have 
\begin{align*}
\mathcal {MBG}_1(M, N) = &\frac{Q^{1 + \delta(\al, \be)}}{2} \frac{1}{(2\pi i )^5}  \int_{(\varepsilon)} \int_{\frac 12 + \varepsilon - iT}^{\frac 12 + \varepsilon + iT} \int_{\frac 12 + \varepsilon - iT}^{\frac 12 + \varepsilon + iT} \int_{(\frac{\varepsilon}{4})} \int_{(\frac{\varepsilon}{4})} \widetilde{\mathcal W}_3 \left( s_1, s_2 ; z \right) \widetilde{V}(z_1) \widetilde{V}(z_2)   \\
&   \cdot \frac{\zeta(1-z) }{\zeta(1 + z) }Q^{\frac 32(s_1 + s_2)- \frac z2} M^{z_1} N^{z_2} \mathcal F(s_1, s_2, z_1, z_2; z)  \> dz_1 \> dz_2 \>ds_1 \> ds_2 \> dz + O(Q^{\frac{13}{10} + \varepsilon}),
\end{align*}
where 
$$ \mathcal F(s_1, s_2, z_1, z_2; z) := \sumtwo_{ \substack{m, n \geq 1 } } \frac{\sigma(m; \al) \sigma(n; -\be)}{m^{1/2 + s_1 + z_1 } n^{1/2 + s_2 + z_2}}  \frac{g^z \mathcal K(-z; g, \m\n) }{\phi (g\m\n, 1 + z)}.$$

By the same arguments as in Section 10 in \cite{CIS}, we have
\begin{align*}
\mathcal F(s_1, s_2, z_1, z_2; z) = & \zeta(2-z) \prod_{i = 1}^3 \frac{\zeta\left( \frac 12 + s_1 + z_1 + \alpha_i \right)}{\zeta\left( \frac 12 + s_1 + z_1 + \alpha_i + 1 -z\right)} \prod_{j = 1}^3  \frac{\zeta\left( \frac 12 + s_2 + z_2 - \beta_j \right)}{\zeta\left( \frac 12 + s_2 + z_2 - \beta_j + 1 -z\right)} \\
& \cdot \prod_{i, j = 1}^3 \zeta\left( 1 + s_1 + z_1 + s_2 + z_2 - z + \alpha_i - \beta_j\right) \mathcal R(s_1, s_2, z_1, z_2; z),
\end{align*}
 where $\mathcal R(s_1, s_2, z_1, z_2; z)$ is absolutely convergent in a wider range of $s_1, s_2, z_1, z_2$ and $z$, a subset of which is the region 
 
 $$ \R (z) < \frac 32,  \ \ \ \ \frac 12 + \sum_{i = 1}^{2} \R (s_i +  z_i) > \R (z) + 2\max(|\alpha_i|, |\beta_j|), $$
 $$ \R(s_i + z_i) > \max(|\alpha_i|, |\beta_j|) , \ \ \ \textrm{and}  \ \ \  1 + \R(s_i + z_i ) > \R(z) + \max(|\alpha_i|, |\beta_j|).   $$
 
Now we move the lines of integration in $s_i$ to $\R(s_i) = 2\varepsilon $ for $i = 1, 2. $ We then pick up the residues at nine poles, which are of the form $s_1 = \frac 12 - z_1 - \alpha_\ell$ and $s_2 = \frac 12 - z_2 + \beta_k$ for $\ell, k = 1,2,3.$ We use \eqref{eqn:boundWt3} and the bound in Lemma \ref{le:zeta6th} to bound the remaining integrals by the same arguments as in the proof of \cite[Proposition 7.2]{CLMR}. This gives that

\begin{align*}
\mathcal {MBG}_1(M, N) =& \sumtwo_{\ell, k \in \{1,2,3\}} \frac{Q^{1 + \delta(\al, \be)}}{2} \frac{1}{(2\pi i )^3} \int_{(\varepsilon)} \int_{(\frac{\varepsilon}{4})} \int_{(\frac{\varepsilon}{4})}  \widetilde{\mathcal W}_3 \left( \frac 12 - z_1 - \alpha_\ell , \frac 12 - z_2 + \beta_k ; z \right) \widetilde{V}(z_1) \widetilde{V}(z_2) \\
&  \cdot Q^{\frac 32(1-z_1 - z_2 - \alpha_\ell + \beta_k)- \frac z2} M^{z_1} N^{z_2}   \frac{ \zeta(2-z) \zeta(1-z)}{\zeta(1 + z) }  \prod_{\substack{i = 1,2, 3 \\ i \neq \ell}} \frac{\zeta\left( 1 - \alpha_\ell + \alpha_i \right)}{\zeta(2+\alpha_i-\alpha_\ell-z)}  \\
& \cdot \prod_{\substack{j = 1,2, 3 \\ j \neq k}} \frac{\zeta\left( 1 + \beta_k - \beta_j \right)}{\zeta(2+\beta_k-\beta_j-z)}\prod_{\substack{1 \leq i, j \leq 3 \\ (i, j) \neq (\ell, k)} } \zeta\left( 2 - \alpha_\ell + \beta_k - z + \alpha_i - \beta_j\right) \\
& \cdot \mathcal R\left(\frac 12 - z_1 - \alpha_\ell , \frac 12 - z_2 + \beta_k, z_1, z_2; z\right)  \> dz_1 \> dz_2 \> dz + O\left( Q^{\frac {13}{10} + \varepsilon } (MN)^{\frac{\varepsilon}{4}}\right).
\end{align*}
Next, we move the lines of integration in $z_1$ to $ \R(z_1) = -3/4+\varepsilon/4$ and $z_2$ to $\R(z_2) = 0$. Then we move the line of integration in $z$ to $\R(z) = 3/4$. We picked these lines so that they are in the region of convergence of $\mathcal R$ and satisfy conditions in Lemma \ref{lem:MellinXYU}. Since $\widetilde {V}$ decays rapidly and $\widetilde {W}_3$ satisfies~\eqref{eqn:boundWt3}, we have that 
$$ \mathcal {MBG}_1(M, N) \ll Q^{1 + \frac{\varepsilon}{8}} Q^{\frac 32(1 + \frac {3}{4}-\frac{\varepsilon}{4}) - \frac{3}{8}} M^{-\frac {3}{4}+\frac{\varepsilon}{4}} \ll Q^{\frac{13}{4}-\frac{\varepsilon}{4}} M^{-\frac{3}{4}+\frac{\varepsilon}{4}}$$
and the claim follows since $M \gg Q^{2-\delta_0}$. 
\end{proof}

Lemma~\ref{lem:sumMBG1} implies that we can extract the main contribution of $$ \sumtwodee_{M, N \leq Q^{2 - \delta_0} } \mathcal {MBG}_1(M, N)  $$ from the whole range of dyadic summation $M, N$. 
	From Equation  \eqref{def:MBG1},
\begin{align*}
		\sumd_M\sumd_N \mathcal {MBG}_1(M, N) = &\frac{Q^{1 + \delta(\al, \be)}}{2} \sum_{ \substack{m, n \geq 1 \\ m \neq n } } \frac{\sigma(m; \al) \sigma(n; -\be)}{\sqrt{mn}} \\
		& \cdot \frac{1}{2\pi i } \int_{(\varepsilon)} \widetilde{\mathcal W}_1^\pm \left( \frac{g\m}{Q^{3/2}}, \frac{g\n}{Q^{3/2}} ; z \right) \frac{\zeta(1-z) \mathcal K(-z; g, \m\n) }{\zeta(1 + z) \phi (g\m\n, 1 + z)}\left( \frac{Q^{1/2}}{g}\right)^{-z} \> dz,
\end{align*}
which is the same expression as \cite[Equation (63)]{CIS} (although our definitions of $\widetilde{\mathcal{W}}_1$ differ). Next we use Equation \eqref{eqn:truncate2} to express $\widetilde{\mathcal W}_1^\pm$  as an integration over $s_1$ and $s_2$. Then we take advantage of the work in \cite[Section 10]{CIS}, which extracts from the above expression the 9 main terms in $\tQ(q;\al, \be)$ corresponding to when one $\alpha_i$ is interchanged with one $\beta_j$. We summarize the result in the proposition below. 
	
\begin{prop} \label{prop:maincont9terms}
Let $\mathcal {MBG}_1(M, N)$ be as in \eqref{def:MBG1combinepm} with $\mathcal{MBG}_1^\pm(M, N)$ as in~\eqref{def:MBG1}. Then 
\begin{align*}
  \sumd_M\sumd_N \mathcal {MBG}_1(M, N) = H(0; \al, \be) \sum_q \Psi \left( \frac{q}{Q}\right) \phi^{\flat}(q) \sum_{\substack{\pi \in S_6/S_3 \times S_3 \\ \pi \ \textrm{permutes exactly } \\ \textrm{one $\alpha_i$ and $\beta_j$}} }\mathcal Q(q; \pi(\al), \pi(\be)) + O\left(Q^{\frac {13}{10} + \varepsilon}\right).
\end{align*}
\end{prop}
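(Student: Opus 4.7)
The plan is to (i) consolidate the dyadic sums using $\sumd_M V(m/M)=1$, (ii) reduce via truncated Mellin inversion to a Mellin--Barnes triple integral whose structure matches \cite[(63)]{CIS}, and (iii) shift the $s_1,s_2$ contours past nine simple poles and identify each residue with one of the nine $S_6/(S_3\times S_3)$-cosets that swaps exactly one $\alpha_i$ with one $\beta_j$.

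Since $\sumd_M V(m/M)=1$ for every $m\ge 1$, summing the definition~\eqref{def:MBG1} over dyadic $M,N$ eliminates the smoothing factors entirely and leaves
\[
\sumd_M\sumd_N\mathcal{MBG}_1(M,N)=\frac{Q^{1+\delta(\al,\be)}}{2}\sum_{\substack{m,n\ge 1\\ m\ne n}}\frac{\sigma(m;\al)\sigma(n;-\be)}{\sqrt{mn}}\cdot\frac{1}{2\pi i}\int_{(\varepsilon)}\Wt_1\!\left(\tfrac{g\m}{Q^{3/2}},\tfrac{g\n}{Q^{3/2}};z\right)\frac{\zeta(1-z)\mathcal K(-z;g,\m\n)}{\zeta(1+z)\phi(g\m\n,1+z)}\left(\tfrac{Q^{1/2}}{g}\right)^{-z}dz,
\]
with $\Wt_1=\Wt_1^++\Wt_1^-$. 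I then apply the truncated Mellin inversion~\eqref{eqn:truncate2} with $T=Q^{6/5}$ and contours at $\R(s_1)=\R(s_2)=\tfrac12+\varepsilon$; the truncation error, controlled by the $z$-decay of $\Wt_3$ from~\eqref{eqn:boundWt3} together with the divisor bound $\sigma(m;\al)\sigma(n;-\be)\ll(mn)^{\varepsilon}$ and the effective support $mn\ll Q^{3+\varepsilon}$ (Lemma~\ref{lem:weightW}), contributes $O(Q^{13/10+\varepsilon})$. After interchanging summation and integration, the inner sum becomes the Dirichlet series $\mathcal F(s_1,s_2;z)$ which, exactly as in \cite[Section 10]{CIS}, factors as $\zeta(2-z)$ times a product of the shifted zetas $\zeta(\tfrac12+s_1+\alpha_i)$, $\zeta(\tfrac12+s_2-\beta_j)$ and $\zeta(1+s_1+s_2+\alpha_i-\beta_j-z)$, with corresponding zetas in the denominator and a remainder Euler product $\mathcal R(s_1,s_2;z)$ that converges absolutely in a neighborhood of the contours.

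Next I shift the $s_1,s_2$-contours leftwards across the nine simple poles located at $(s_1,s_2)=(-\alpha_\ell,\beta_k)$, $1\le \ell,k\le 3$, produced by the numerator factors $\zeta(\tfrac12+s_1+\alpha_i)$ and $\zeta(\tfrac12+s_2-\beta_j)$. With the contours pushed into the domain of absolute convergence of $\mathcal R$, the residual triple integral is bounded by the sixth-moment estimate Lemma~\ref{le:zeta6th} combined with the $z$-decay~\eqref{eqn:boundWt3}, yielding again an error of $O(Q^{13/10+\varepsilon})$, by the same argument already used in the proof of Lemma~\ref{lem:MBGunbalancedMN}. What remains is the sum of the nine residue terms.

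Finally, I identify each residue with one of the claimed main terms. Evaluating $\Wt_3$ at $(s_1,s_2)=(-\alpha_\ell,\beta_k)$ and using the identity for $\mathcal H(\xi,z)$ in Lemma~\ref{lem:MellinXYU} reproduces the gamma factor $G(\tfrac12;\pi(\al),\pi(\be))$, where $\pi\in S_6/(S_3\times S_3)$ is the transposition exchanging $\alpha_\ell$ with $\beta_k$; the surviving $\zeta$-values at the pole combine with $\mathcal R(-\alpha_\ell,\beta_k;z)$ and with $\mathcal K(-z;g,\m\n)/\phi(g\m\n,1+z)$ to reassemble into the Euler product $\mathcal A(\tfrac12;\pi(\al),\pi(\be))\,\mathcal Z(\tfrac12;\pi(\al),\pi(\be))/\mathcal B_q(\tfrac12;\pi(\al),\pi(\be))$; and the final $z$-contour integral, evaluated against $\widetilde\Psi$, produces $\sum_q\Psi(q/Q)\phi^\flat(q)$ together with the $(q/\pi)^{\delta(\pi(\al),\pi(\be))}$ factor, now in the swapped variables, supplied by the prefactor $Q^{\delta(\al,\be)}$ after the $(\alpha_\ell,\beta_k)$-swap. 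Multiplying by $H(0;\al,\be)$ and summing over the nine pairs $(\ell,k)$ gives exactly the nine coset contributions claimed. The principal obstacle will be this last identification step, in particular the gamma-factor bookkeeping through $\mathcal H(\xi,z)$ and the local Euler-product rearrangement, which is the technical heart of \cite[Section 10]{CIS}; we would follow that argument essentially verbatim, replacing only the CIS weight function by our $W_{\al,\be}$-based variant whose Mellin properties are collected in Lemmas~\ref{lem:Mellin1}--\ref{lem:MellinXYU}.
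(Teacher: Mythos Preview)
Your proposal is essentially correct and follows the same route as the paper: collapse the dyadic $V$-factors, apply the truncated Mellin inversion \eqref{eqn:truncate2} with $T=Q^{6/5}$, invoke the CIS factorization of the resulting Dirichlet series, shift the $s_1,s_2$-contours to pick up nine residues, and bound the remaining integrals by Lemma~\ref{le:zeta6th} exactly as in the proof of Lemma~\ref{lem:MBGunbalancedMN}. One small slip: since the numerator factors are $\zeta(\tfrac12+s_1+\alpha_i)$ and $\zeta(\tfrac12+s_2-\beta_j)$, the nine poles occur at $(s_1,s_2)=(\tfrac12-\alpha_\ell,\tfrac12+\beta_k)$, not at $(-\alpha_\ell,\beta_k)$; this is purely a bookkeeping typo and does not affect the argument.
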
	
We refer readers to~\cite[Section 10]{CIS} for the proof, see in particular~\cite[Equation (66)]{CIS} for the main term. The error term arises from treating the integrals over $s_1$ and $s_2$, analogous to the proof of Lemma \ref{lem:MBGunbalancedMN}.
\subsection{Bounding the error terms} \label{ssec:errorEg}
Now we consider the term $\mathcal {EBG}^\pm (M, N)$ defined in \eqref{def:EBG}. We will show that this contribution is negligible below.

\begin{lem} \label{lem:calcEBG} Let $\varepsilon, \delta_0 \in (0, 1/8)$ be fixed and let $D_0 \geq 1/2$. 
	We have, whenever $\max\{M, N\} \leq Q^{2-\delta_0}$, 
	$$ \mathcal {EBG}^\pm(M, N) \ll Q^{2 - \delta_0 + \varepsilon} D_0    + Q^{5/4 + \varepsilon} D_0^{1/2}. $$
\end{lem}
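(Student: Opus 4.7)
The plan is to bound $\mathcal{EBG}^\pm(M,N)$ by following the same strategy used for the non-principal character contribution $\mathcal{EB}(M,N)$ in the proof of Lemma~\ref{lem:BD}, which in turn parallels \cite[Proof of Proposition 3]{CIS}. First I would apply the Mellin transform of Lemma~\ref{lem:MellinXY} to $\mathcal W^{\pm}_{\al,\be}$ in its $x,y$ variables with $x=g\m/Q^{3/2}$, $y=g\n/Q^{3/2}$, and $u=Q^{1/2}d/(gh)$, together with Mellin inversions of $V(m/M)$ and $V(n/N)$. This separates the $m$- and $n$-sums so that, after opening $\sigma(m;\al)$ and $\sigma(n;-\be)$, one obtains products of six Dirichlet $L$-functions
\begin{align*}
\prod_{i=1}^{3}L(\tfrac12+\alpha_i+s_1+z_1,\psi)\prod_{j=1}^{3}L(\tfrac12-\beta_j+s_2+z_2,\overline\psi),
\end{align*}
each multiplied by harmless local factors $L_{abh}(\cdots)^{\pm 1}\ll Q^{\varepsilon}$ at primes dividing $abh$.

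Second, since $\psi\neq\psi_0$, each $\psi\Mod{abh}$ is induced from a primitive character $\psi^{\star}\Mod{r}$ with $r\mid abh$ and $r>1$; the difference between $L(s,\psi)$ and $L(s,\psi^{\star})$ contributes another factor of size $Q^{\varepsilon}$ in our region. After an application of H\"older's inequality, matters reduce to an estimate of the form
\begin{align*}
\sum_{r\le R}\,\sumstar_{\psi^{\star}\Mod{r}}\int_{-T}^{T}\bigl|L(\tfrac12+\sigma+it,\psi^{\star})\bigr|^{6}\,dt,
\end{align*}
for a suitable truncation parameter $T$ (coming from the Mellin variables $s_1,s_2,z_1,z_2$) and $R$ an upper bound for $abh$. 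Using the hybrid large sieve inequality for primitive characters (along the lines of \cite[Theorem 7.34]{IK}) combined with the averaged sixth moment of zeta in Lemma~\ref{le:zeta6th}, this double sum is bounded by $(R^{2}+T^{5/4})(RT)^{\varepsilon}$. The restriction $\max(M,N)\le Q^{2-\delta_0}$, the truncation $h\le 10\cdot Q^{1-\delta_0}D_0$ from the support of $\Psi$, and the divisibility $b\mid g\ll\min(M,N)$ should produce $R\ll Q^{1-\delta_0+\varepsilon}D_0\min(M,N)^{\varepsilon}$ once the sum over $a$ is handled.

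Finally, substituting this sixth-moment bound back into~\eqref{def:EBG} and carrying out the trivial summations over $d,b,h$ (weighted by $1/(ad\phi(abh))$) should yield the two claimed terms from the two regimes of the large sieve: the $R^{2}$-regime gives a contribution of size $Q^{2-\delta_0+\varepsilon}D_0$, while the $T^{5/4}$-regime gives $Q^{5/4+\varepsilon}D_0^{1/2}$, where the square root saving in $D_0$ comes from pairing $\phi(abh)$ in the denominator against the primitive-character length $R$ via a Cauchy--Schwarz step. The main obstacle is the sum over $a$, which is not a priori truncated by the support of $\Psi$; I expect to truncate it effectively at $a\ll Q^{\varepsilon}$ by exploiting the decay of $\widetilde{\mathcal W}_{2}^{\pm}(s_{1},s_{2};u)$ in $\max(|s_1|,|s_2|)$ provided by Lemma~\ref{lem:MellinXY}, together with the factor $1/\phi(abh)$ in~\eqref{def:EBG}. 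A secondary delicate point is balancing the powers of $T$ and $R$ so that the second term has the improved $D_0^{1/2}$ rather than the naive $D_0$ dependence.
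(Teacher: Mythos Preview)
Your outline (Mellin in the $x,y$ variables of $\mathcal{W}^\pm_{\al,\be}$, express the $\m,\n$-sums as products of six $L$-functions, then apply a sixth-moment large sieve) is the same skeleton as the paper's proof, but two of your key inputs are wrong, and without them the argument does not close.

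First, the sixth-moment bound you invoke is incorrect. Lemma~\ref{le:zeta6th} applies only to $\zeta$; it says nothing about $L(s,\psi^\star)$ for conductor $r>1$, and the hybrid bound $(R^2+T^{5/4})$ you write down is false (it is stronger than Lindel\"of on average). The paper uses the sixth-moment analogue of \cite[Proposition~3.2]{CLMR}, which gives
\[
\sum_{\ell\sim L}\ \sum_{\substack{\psi\Mod\ell\\ \psi\neq\psi_0}}\int_{-T}^{T}\bigl|L(\tfrac12+it,\psi)\bigr|^6\,dt\ \ll\ (LT)^\varepsilon\bigl(L^2T+(LT)^{3/2}\bigr),
\]
and it is the second term $(LT)^{3/2}$, not any $T^{5/4}$, that eventually produces $Q^{5/4}D_0^{1/2}$.

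Second, and more seriously, you have ignored the $u$-dependence in the bound of Lemma~\ref{lem:MellinXY}. Here $u=Q^{1/2}d/(gh)$, and Lemma~\ref{lem:MellinXY} gives, for any $k\geq 1$,
\[
\bigl|\widetilde{\mathcal W}_2^\pm(s_1,s_2;u)\bigr|\ \ll\ \frac{(1+u)^{k-1}}{\max(|s_1|,|s_2|)^{k}\,|s_1+s_2|^{l}}.
\]
Since $u$ can be as large as $Q^{1/2}D_0/(bh)$, the $s_1,s_2$-integrals are \emph{not} effectively truncated at $Q^\varepsilon$; the relevant truncation scale is $T\asymp 1+Q^{1/2}D_0/(BH)$ (with $B,H$ dyadic scales for $b,h$). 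In the paper this is handled by summing $g,d$ first to obtain the factor $(1+Q^{1/2}D_0/(BH))^{k-1}/T^k$ (equation~\eqref{eq:W2etcbound}), then choosing $k=1$ or $k=4$ according to whether $T$ is below or above $1+Q^{1/2}D_0/(BH)$. This balance, not a Cauchy--Schwarz step, is the source of $D_0^{1/2}$: after optimization one gets
\[
Q^{1+\varepsilon}\Bigl(H+\bigl(1+\tfrac{Q^{1/2}D_0}{BH}\bigr)^{1/2}H^{1/2}\Bigr),
\]
and using $H\ll Q^{1-\delta_0}D_0$ yields exactly $Q^{2-\delta_0}D_0+Q^{5/4}D_0^{1/2}$.

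A minor point: your proposed truncation $a\ll Q^\varepsilon$ via the decay of $\widetilde{\mathcal W}_2^\pm$ in $\max|s_i|$ does not work, since $a$ does not appear in that Mellin transform. The paper simply truncates at $a\leq 2Q$ (as in \cite[Section~8]{CIS}) and controls the $a$-sum through the weight $1/(a\,\phi(abh))\asymp 1/(a^2bh)$, which after dyadic decomposition gives the factor $A^{-2}B^{-2}H^{-1}$ in~\eqref{eq:dyadicblockcontr}.
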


\begin{proof}
	
The proof is a small modification of the arguments used to derive \cite[Equation (55)]{CIS}, but using the fact that $MN \ll Q^{3 + \varepsilon}$. The first step (see~\cite[Beginning of Section 8]{CIS}) is to truncate the sum over $a$ to be bounded by $2Q$. Utilizing Remark~\ref{rem:gsize} we also make the truncations $b, g \leq Q^{3/2+\varepsilon}$. 
Using then the Mellin transform in Lemma \ref{lem:MellinXY} and the Mellin transform of $V$ in \eqref{def:MellinV}, we have that

\begin{align*}
\mathcal{EBG}^\pm(M, N) =& \frac{ Q^{1 + \delta(\al, \be)}}{2}  \sumthree_{\substack{a \leq 2Q \; b \leq Q^{3/2+\varepsilon} \\ h \leq 10 \cdot Q^{1-\delta_0}D_0 }} \sum_{ \substack{\psi \Mod{abh} \\ \psi \neq \psi_0} } \sum_{\substack{g \leq Q^{3/2+\varepsilon} \\ b| g, (a, g) = 1}}\sum_{ \substack{d \leq D_0 \\ (d, g) = 1 }}   \frac{\mu(d) \mu(a) \mu(b) }{adg \phi(abh)} \\
&\cdot\frac{1}{(2\pi i)^4} \int_{(\varepsilon)} \int_{(\varepsilon)}\int_{\left( \frac12 + \varepsilon \right)} \int_{\left( \frac12 + \varepsilon \right)} \widetilde{\mathcal W}_2^\pm \left( s_1, s_2; \frac{Q^{1/2}d}{gh} \right) \widetilde{V}(z_1) \widetilde{V}(z_2)\left( \frac{Q^{3/2}}{g}\right)^{s_1 + s_2} \frac{M^{z_1}N^{z_2}}{g^{z_1 + z_2}}\\
&\cdot \sumtwo_{\substack{\m, \n\\ \m\neq \n , (\m, \n) = 1 \\ (\m \n, d) = 1}} \frac{\sigma(g\m; \al) \sigma(g\n; -\be) }{\m^{\frac 12 + s_1 + z_1} \n^{\frac 12 + s_2 + z_2}}   \psi(\m) \overline{\psi}(\mp \n) \>ds_1  \>ds_2 \>dz_1\>dz_2 +  O \left( Q^{\frac 32 + 3\varepsilon} \right).
\end{align*}

Next we express the sums over $\m, \n$ in terms of product of $L$-functions. Since $\psi$ is not a trivial character, $L$-functions have no poles. As in~\cite[(53)--(54)]{CIS} we can move the line of integration over $s_i$ to $\R(s_i) = \varepsilon$. Then we change variables, letting $w_i = s_i + z_i$, and obtain that the contribution to $\mathcal {EBG}^\pm(M, N)$ of the main term above is bounded by 
\begin{align}\label{eqn:errorbeforedyadic}
  \begin{aligned}
&\ll Q^{1 + 7\varepsilon} \sumthree_{\substack{a \leq 2Q, \; b \leq Q^{3/2+\varepsilon} \\ h \leq 10 \cdot Q^{1-\delta_0}D_0 }} \sum_{ \substack{\psi \Mod{abh} \\ \psi \neq \psi_0} } \sum_{\substack{g \leq Q^{3/2+\varepsilon} \\ b| g, (a, g) = 1}}\sum_{ \substack{d \leq D_0 \\ (d, g) = 1 }}   \frac{1}{adg \phi(abh)} \int_{(\varepsilon)}\int_{(\varepsilon)} \int_{(2\varepsilon)}\int_{(2\varepsilon)}  \left| \widetilde{\mathcal W}_2^\pm \left( s_1, s_2; \frac{Q^{1/2}d}{gh} \right) \right|\\
& \cdot  \left|\widetilde{V}(w_1 - s_1) \right| \left|\widetilde{V}(w_2 - s_2) \right| \left( 1 + \prod_{i = 1}^3 \left| L \left( \frac 12 +  w_1 + \alpha_i, \psi\right)L \left( \frac 12 + w_2 - \beta_i, \overline{\psi}\right)\right| \right) \>dw_1 \>dw_2 \> ds_1 \> ds_2 .
  \end{aligned}
\end{align}

We consider the sums over $g$ and $d$ and apply the bound for  $\widetilde{\mathcal W}_2^{\pm}$ in Lemma \ref{lem:MellinXY} to derive that for any $h \geq 1,$ and $s_1, s_2$ with $\R(s_i) = \varepsilon$, and any fixed natural number $k$,
\begin{align}
\label{eq:W2etcbound}
\begin{aligned}
\sum_{\substack{g \leq Q^{3/2+\varepsilon} \\ b|g} } \frac 1g\sum_{d \leq D_0} \frac 1d \left| \widetilde{\mathcal W}_2^\pm \left(s_1, s_2; \frac{Q^{1/2} d}{gh} \right)\right| &\ll Q^\varepsilon \sum_{\substack{g \leq Q^{3/2+\varepsilon} \\ b|g} } \frac 1g \sum_{d \leq D_0} \frac 1d  \frac{1}{\max \{|s_1|, |s_2| \}^{k} |s_1 + s_2|^3} \left( 1 + \frac{Q^{1/2}D_0}{gh} \right)^{k-1} \\
&\ll  \frac{Q^{2\varepsilon}}{b\max \{|s_1| + 1, |s_2| + 1\}^{k} (|s_1 + s_2| + 1)^3} \left( 1 + \frac{Q^{1/2}D_0}{bh} \right)^{k-1}.
\end{aligned}
\end{align}
Notice first that, for any $\ell \geq 0$, $\widetilde{V}(z) \ll \frac{1}{(1 + |z|)^\ell}$, so the contribution of $|w_i - s_i| \gg Q^{\varepsilon}$ to \eqref{eqn:errorbeforedyadic} is $\ll_A Q^{-A}$.

Let us now return to~\eqref{eqn:errorbeforedyadic}. We divide the variables $a, b, h$ into dyadic blocks $a \sim A, b \sim B, h \sim H$ (with $A \ll Q, \, B \ll Q^{3/2+\varepsilon}$, and $H \ll Q^{1-\delta_0} D_0$), let $\ell = abh$, and also divide $w_1$ and $w_2$ into blocks such that $\max\{|w_1| + 1, |w_2| + 1\} \sim T$.  Then we apply~\eqref{eq:W2etcbound} and H\"older's inequality to  \eqref{eqn:errorbeforedyadic}. We derive that for any $k$, the contribution of each block is 
\begin{align}
\label{eq:dyadicblockcontr}
\begin{aligned}
&\ll \frac{Q^{1+ 9\varepsilon}}{A^{2 - \varepsilon} B^{2-\varepsilon} H^{1 - \varepsilon} } \left( 1 + \frac{Q^{1/2}D_0}{BH}\right)^{k - 1} \frac{1}{T^k}   \\ 
& \hskip 0.2in \cdot \max_{\substack{\alpha \in \{\alpha_1, \alpha_2, \alpha_3, \\ \beta_1, \beta_2, \beta_3\}}} \sum_{ABH \leq \ell < 8ABH} \sum_{ \substack{\psi \Mod{\ell} \\ \psi \neq \psi_0} }  \int_{-4T}^{4T} \left(  1 +   \left| L \left( \frac 12 + 2\varepsilon + it + \alpha, \psi \right)\right|^6 \right) \>dt.
\end{aligned}
\end{align}
As in \cite[Section 8]{CIS}, we apply the large sieve inequality. The precise bound we need is the sixth moment variant of~\cite[Proposition 3.2]{CLMR} which follows completely similarly (morally bounding the sixth moment corresponds to, by the approximate functional equation, bounding the mean square of a Dirichlet polynomial of length $\ll (ABHT)^{3/2}$ over a set of size $\ll (ABH)^2 T$)).  Consequently~\eqref{eq:dyadicblockcontr} is 
\begin{align}
\label{eq:dyadicblockcontr2}
\begin{aligned}
\ll  \frac{Q^{1+ 10\varepsilon}}{A^{2 - \varepsilon} B^{2-\varepsilon} H^{1 - \varepsilon} } \left( 1 + \frac{Q^{1/2}D_0}{BH}\right)^{k - 1} \frac{1}{T^k} \left( T(ABH)^2 +  (TABH)^{3/2}\right).
\end{aligned}
\end{align}

When $T \leq  1 + \frac{Q^{1/2}D_0}{BH}$, we choose $k = 1$, and otherwise, we choose $k = 4,$ so in any case~\eqref{eq:dyadicblockcontr2} is
\[
\ll Q^{1+15\varepsilon} \left(H +\left( 1 + \frac{Q^{1/2}D_0}{BH}\right)^{1/2}H^{1/2} \right).
\]

Recall that $H \ll {Q^{1 - \delta_0} D_0}$. Thus after dyadic summation $A, B, H, T$, we derive that the contribution to \eqref{eqn:errorbeforedyadic} from this case  is bounded by
\begin{align*}
 &\ll Q^{2 - \delta_0 + 16\varepsilon} D_0    + Q^{5/4 + 16\varepsilon} D_0^{1/2},
\end{align*}
so the claim follows by adjusting $\varepsilon$.
\end{proof}

\subsection{Proof of Proposition \ref{prop:offdiagonal}} \label{ssec:conclusionBS}

From \eqref{def:MBEB}, \eqref{eqn:BGrelatedtoMBGandEBG}, \eqref{eqn:combineBDandMBG}, and Lemma \ref{lem:calcEBG}, we derive that
\begin{align*} \mathcal {BD}(M, N) + \mathcal {BG}(M, N)  
  &=  \mathcal{MBG}_1(M, N) + O \left( \frac{Q^{2 + \varepsilon}}{D_0} + D_0Q^{3/2 + \varepsilon}  + Q^{2 - \delta_0 + \varepsilon} D_0 +  Q^{5/4 + \varepsilon} D_0^{1/2} \right).
\end{align*}
Then from \eqref{eqn:BSinitial}, Lemma \ref{lem:sumMBG1} and Proposition \ref{prop:maincont9terms}, we obtain that
\begin{align*} \mathcal B \mathcal S(\Psi, Q; \al, \be) &= H(0; \al, \be) \sum_q \Psi \left( \frac{q}{Q}\right) \phi^{\flat}(q) \sum_{\substack{\pi \in S_6/S_3 \times S_3 \\ \pi \ \textrm{permutes exactly } \\ \textrm{one $\alpha_i$ and $\beta_j$}} }\mathcal Q(q; \pi(\al), \pi(\be)) \\
                                                     &\hskip 0.5in + O\left( \frac{Q^{2 + \varepsilon}}{D_0} + D_0Q^{\frac 32 + \varepsilon}  + Q^{2 - \delta_0 + \varepsilon} D_0 + Q^{2 - \frac{1}{4} + \frac{3\delta_0}{4} + \varepsilon} \right).
\end{align*}

To balance the error terms $\frac{Q^{2 + \varepsilon}}{D_0}$ and $Q^{2 - \delta_0 + \varepsilon} D_0$ we choose $D_0 = Q^{\delta_0/2}$. Then the error terms is 
\begin{align*}
  O\left( Q^{2 - \frac{\delta_0}2 + \varepsilon} + Q^{2 - \frac{1}{4} + \frac{3\delta_0}{4} + \varepsilon} \right),
\end{align*}
so the claim follows.

\section{Unbalanced sums}\label{sec:unbalancedprelim}
We now prepare to prove Proposition \ref{prop:unbalanced}. Recall that we are interested in bounding 
$$\sum_{q} \Psi\bfrac{q}{Q} \sumbq  S(M, N),
$$when  $Q^{2-\delta_0} \le M \le Q^{3 + \varepsilon}$.  

\subsection{Notational simplification}
To simplify notation, we set $\al = \be = (0, 0, 0)$.  The case where the shifts are nonzero may be proven similarly with no conceptual change. To be precise, we start by writing
\begin{align*}
S(M, N) =  \sumtwo_{m, n} \frac{\tau_3(m) \tau_3(n) \chi(m) \cb(n)}{\sqrt{mn}} W_{0, 0}\left(m, n; q\right) V\bfrac{m}{M} V\bfrac{n}{N},
\end{align*}where $\tau_3(l)$ denotes the number of ways of writing $l$ as a product of three natural numbers.  

Recalling from \eqref{eqn:Walbe} that
$$
W_{0, 0}(m, n; q) = \frac{1}{2\pi i} \int_{\left(\frac{1}{\log Q}\right)} G \left(\frac 12 + s; 0, 0\right) H(s; 0, 0) \left( \frac{mn\pi^3}{q^3} \right)^{-s} \frac{ds}{s},
$$and the rapid decay of $G$ (which follows from the definition~\eqref{eq:Gdef} and Stirling's formula), we see that it suffices to bound
$$\sumtwo_{m, n} \frac{\tau_3(m) \tau_3(n) \chi(m) \cb(n)}{(mn)^{1/2+s}} V\bfrac{m}{M} V\bfrac{n}{N}
$$for $|s| \ll q^\varepsilon$ and $\tRe s = \frac{1}{\log Q}$.  We will further allow ourselves to rewrite the above as 
$$\sumtwo_{m, n} \frac{\tau_3(m) \tau_3(n) \chi(m) \cb(n)}{(mn)^{1/2}} V\bfrac{m}{M} V\bfrac{n}{N}
$$for slightly different functions $V$, where now
\begin{equation}\label{eqn:Vnew}
V^{(k)}{(x)} \ll q^\varepsilon,
\end{equation}for all integer $k\ge 0$. We shall assume this throughout the rest of the paper.

We now write $m = efg$, and apply a smooth partition of unity to $e, f$ and $g$, to see that our sum is now at most $\log^3 Q$ sums of the form
\begin{align*}
\sumfour_{e, f, g, n} \frac{\tau_3(n) \chi(efg) \cb(n)}{(efgn)^{1/2}} V\bfrac{e}{E} V\bfrac{f}{F} V\bfrac{g}{G} V\bfrac{efg}{M} V\bfrac{n}{N},
\end{align*}where $EFG \asymp M$. Without loss of generality we may assume that $E \geq F \geq G$. We may again neglect the factor $V\bfrac{efg}{M}$ in the same manner in which we removed $ W_{0,0}\left(m, n; q\right)$, and absorb a factor of $\frac{\sqrt{EFGN}}{\sqrt{efgn}}$ into the smooth functions $V$. Thus we will examine
\begin{align*}
S(E, F, G, N) := \frac{1}{\sqrt{EFGN}}\sumfour_{e, f, g, n} \tau_3(n) \chi(efg) \cb(n) V\bfrac{e}{E} V\bfrac{f}{F} V\bfrac{g}{G} V\bfrac{n}{N}
\end{align*}
for $EFG \asymp M$. Again, the weight functions $V$ have changed slightly, but still satisfy \eqref{eqn:Vnew}.

\subsection{Initial manipulations}
By Lemma \ref{lem:orthogonal}, we have that
\begin{align*}
&\sum_q \Psi\bfrac{q}{Q}\sumbq  S(E, F, G, N) \\
&= \frac 12 \sum_{d} \sum_r \Psi\bfrac{dr}{Q} \mu(d) \phi(r) \frac{1}{\sqrt{EFGN}} \sumfour_{\substack{e, f, g, n \\ e \equiv \pm \overline{fg} n \Mod r\\ (efgn, dr) = 1}} \tau_3(n) V\bfrac{e}{E} V\bfrac{f}{F} V\bfrac{g}{G} V\bfrac{n}{N}.
\end{align*}

The conditions $e \equiv \overline{fg} n \Mod r$ and $e \equiv -\overline{fg} n \Mod r$ are dealt with by similar methods, so we examine the case $e \equiv \overline{fg} n \Mod r$ only.  Thus, we will focus our attention on
\begin{align*}
\calS := \sum_{d} \sum_r \Psi\bfrac{dr}{Q} \mu(d) \phi(r) \frac{1}{\sqrt{EFGN}} \sumfour_{\substack{e, f, g, n\\ e \equiv \overline{fg} n \Mod r\\ (efgn, dr) = 1}} \tau_3(n) V\bfrac{e}{E} V\bfrac{f}{F} V\bfrac{g}{G} V\bfrac{n}{N}.
\end{align*}

We shall apply Poisson summation to two or three of the variables. As usual, we write
\[
\widehat{V}(\xi) = \int_{-\infty}^\infty V(x) \ex(-\xi x) dx
\]
for the Fourier transform of $V$. For clarity, we record the following lemma, which is essentially an application of Poisson summation.

\begin{lem}\label{lem:firstpoisson}
Let $r, f, g, n, \lambda \in \mathbb{N}$ with $(fgn, r) = 1$. Then
\begin{align*}
\sum_{\substack{e\equiv \overline{fg} n \Mod r \\ (e, \lambda) = 1}} V\bfrac{e}{E} = \frac{E}{r} \sum_{\substack{\nu_1 | \lambda \\ (\nu_1, r) = 1}} \frac{\mu(\nu_1)}{\nu_1} \sum_e \mathrm{e}\bfrac{ne\overline{\nu_1 fg}}{r} \widehat V\bfrac{Ee}{\nu_1 r}.
\end{align*}
\end{lem}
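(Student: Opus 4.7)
The proof is a standard two-step manipulation: first detect the coprimality condition by Möbius inversion, then apply Poisson summation to the resulting arithmetic progression. I do not anticipate any real obstacle — the only care needed is in bookkeeping the modular inverses.

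First, I would detect the coprimality condition $(e, \lambda) = 1$ by writing
\[
\mathbf{1}_{(e, \lambda) = 1} = \sum_{\nu_1 \mid (e, \lambda)} \mu(\nu_1) = \sum_{\substack{\nu_1 \mid \lambda \\ \nu_1 \mid e}} \mu(\nu_1),
\]
then swap the order of summation and substitute $e = \nu_1 e'$. The inner sum becomes a sum over $e'$ with $\nu_1 e' \equiv \overline{fg}\, n \Mod{r}$. Since $(n, r) = 1$, this congruence is solvable in $e'$ if and only if $(\nu_1, r) = 1$, and when that holds the condition simplifies to $e' \equiv \overline{\nu_1 fg}\, n \Mod{r}$. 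Thus
\[
\sum_{\substack{e \equiv \overline{fg}\, n \Mod{r} \\ (e, \lambda) = 1}} V\!\bfrac{e}{E} = \sum_{\substack{\nu_1 \mid \lambda \\ (\nu_1, r) = 1}} \mu(\nu_1) \sum_{e' \equiv \overline{\nu_1 fg}\, n \Mod{r}} V\!\bfrac{\nu_1 e'}{E}.
\]

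Next, I would apply Poisson summation to the inner sum. Using the standard identity
\[
\sum_{e' \equiv a \Mod{r}} \Phi(e') = \frac{1}{r} \sum_{h \in \mathbb{Z}} \ex\!\bfrac{a h}{r} \widehat{\Phi}\!\bfrac{h}{r}
\]
with $\Phi(x) = V(\nu_1 x / E)$, one computes $\widehat{\Phi}(\xi) = (E/\nu_1) \widehat{V}(E\xi/\nu_1)$. Taking $a \equiv \overline{\nu_1 fg}\, n \Mod{r}$ and relabeling $h$ as $e$, the inner sum equals
\[
\frac{E}{\nu_1 r} \sum_{e} \ex\!\bfrac{n e \, \overline{\nu_1 fg}}{r} \widehat{V}\!\bfrac{E e}{\nu_1 r}.
\]

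Finally, plugging this back and pulling the factor $E/r$ out front gives the desired formula. The only subtlety is the appearance of $(\nu_1, r) = 1$ as a condition rather than a coprimality restriction that needed to be imposed by hand — it is forced by the solvability of the congruence modulo $r$ — and the correct form of the inverse $\overline{\nu_1 fg}$ which emerges from inverting $\nu_1$ modulo $r$ after the substitution.
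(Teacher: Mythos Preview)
Your proof is correct and follows essentially the same route as the paper: M\"obius inversion to remove $(e,\lambda)=1$, the observation that only $\nu_1$ with $(\nu_1,r)=1$ contribute, and then Poisson summation on the resulting arithmetic progression. The paper phrases the justification of $(\nu_1,r)=1$ slightly differently (noting that $e\equiv \overline{fg}\,n\Mod r$ forces $(e,r)=1$, hence any $\nu_1\mid e$ is coprime to $r$), but this is equivalent to your solvability argument.
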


\begin{proof}
	Detecting the condition $(e, \lambda) = 1$ by Möbius inversion (introducing $\mu(\nu_1)$), we have
\begin{align*}
\sum_{\substack{e\equiv \overline{fg} n \Mod r\\ (e, \lambda) = 1}} V\bfrac{e}{E} 
= \sum_{\substack{\nu_1 | \lambda}} \mu(\nu_1) \sum_{\substack{e \equiv \overline{f g} n \Mod{r} \\ \nu_1 | e}} V \bfrac{e}{E}.
\end{align*}
Note that $(e, r) = 1$ since $e \equiv \overline{f g} n \Mod{r}$ and $(f g n, r) = 1$. Thus $(\nu_1, r) = 1$. Making a change of variable and applying Poisson summation, we see that the above is equal to
\begin{align*}
\sum_{\substack{\nu_1 | \lambda \\ (\nu_1, r) = 1}} \mu(\nu_1) \sum_{e\equiv \overline{\nu_1 fg} n \Mod r} V\bfrac{\nu_1 e}{E} &= \sum_{\substack{\nu_1 | \lambda \\ (\nu_1, r) = 1}} \mu(\nu_1)\sum_m V\bfrac{\nu_1(mr + \overline{\nu_1 fg}n)}{E}\\
&= \sum_{\substack{\nu_1 | \lambda \\ (\nu_1, r) = 1}} \mu(\nu_1) \sum_e \int V\bfrac{\nu_1(tr + \overline{\nu_1 fg}n)}{E} \ex(-et) dt,
\end{align*}which gives the desired result upon a change of variables $y = \frac{\nu_1(tr + \overline{\nu_1 fg}n)}{E}$. 
\end{proof}

By Lemma \ref{lem:firstpoisson}, we see that 
\begin{align*}
\calS &= \frac{\sqrt{E}}{\sqrt{FGN}} \sum_{d} \sum_r \sum_{\substack{\nu_1 | d \\ (\nu_1, r) = 1}}  \Psi\bfrac{dr}{Q} \mu(d) \frac{\phi(r)}{r} \frac{\mu(\nu_1)}{\nu_1}  \\
&\cdot \sumfour_{\substack{e, f, g, n \\ (fgn, dr) = 1}} \ex\bfrac{ne\overline{\nu_1 fg}}{r} \widehat V\bfrac{Ee}{\nu_1 r} \tau_3(n)  V\bfrac{f}{F} V\bfrac{g}{G} V\bfrac{n}{N}.
\end{align*}
Here, it is convenient to isolate the contribution of the $e = 0$ term, which is
\begin{align}
\begin{aligned}
\label{eqn:e0contribution}
&\widehat V(0)  \frac{\sqrt{E}}{\sqrt{FGN}} \sum_{d} \sum_r \sum_{\substack{\nu_1 | d \\ (\nu_1, r) = 1}} \Psi\bfrac{dr}{Q} \mu(d) \frac{\phi(r)}{r}  \frac{\mu(\nu_1)}{\nu_1} \\
&\cdot \sumthree_{\substack{f, g, n \\ (fgn, dr) = 1}} \tau_3(n) V\bfrac{f}{F} V\bfrac{g}{G} V\bfrac{n}{N}.
\end{aligned}
\end{align}
For given $q \in \mathbb{N}$, consider the contribution of $d$ and $r$ such that $dr = q$ to~\eqref{eqn:e0contribution}. We have that
\begin{align*}
\sum_{d|q} \mu(d) \frac{\phi(q/d)}{q/d} \sum_{\substack{\nu_1| d \\ (\nu_1, q/d) = 1}} \frac{\mu(\nu_1)}{\nu_1} =
 \sum_{d|q} \mu(d) \prod_{p|q/d} \left(1-\frac 1p\right) \prod_{\substack{p|d\\ p\nmid q/d}} \left(1-\frac 1p\right) = \frac{\phi(q)}{q} \sum_{d \mid q} \mu(d)
 =0
\end{align*}
for $q>1$, as is the case for us. Thus, the quantity in \eqref{eqn:e0contribution} vanishes and so
\begin{align*}
\calS &= \frac{\sqrt{E}}{\sqrt{FGN}} \sum_{d} \sum_r \sum_{\substack{\nu_1 | d \\ (\nu_1, r) = 1}}  \Psi\bfrac{dr}{Q} \mu(d) \frac{\phi(r)}{r} \frac{\mu(\nu_1)}{\nu_1}  \\
&\cdot \sum_{e \neq 0} \sumthree_{\substack{f, g, n \\ (fgn, dr) = 1}} \ex\bfrac{ne\overline{\nu_1 fg}}{r} \widehat V\bfrac{Ee}{\nu_1 r} \tau_3(n)  V\bfrac{f}{F} V\bfrac{g}{G} V\bfrac{n}{N}.
\end{align*}

Next we remove the condition $(n, r) = 1$ by M\"obius inversion (introducing $\mu(\gamma)$), getting that
\begin{align*}
\calS &= \frac{\sqrt{E}}{\sqrt{FGN}} \sum_{\substack{d, \gamma \\ (d, \gamma) = 1}} \mu(d) \mu(\gamma) \sum_r \sum_{\substack{\nu_1 | d \\ (\nu_1, r) = 1}}  \Psi\bfrac{d\gamma r}{Q}  \frac{\phi(\gamma r)}{\gamma r} \frac{\mu(\nu_1)}{\nu_1}  \\
&\cdot \sum_{e \neq 0} \sumthree_{\substack{f, g, n \\ (fg, d\gamma r) = 1 \\ (n, d) = 1}} \ex\bfrac{ne\overline{\nu_1 fg}}{r} \widehat V\bfrac{Ee}{\nu_1 \gamma r} \tau_3(\gamma n)  V\bfrac{f}{F} V\bfrac{g}{G} V\bfrac{\gamma n}{N}.
\end{align*}
We also divide out common factors of $e$ and $r$ at this point. In order to do this, we write $r\kappa$ for $r$ and $e \kappa$ for $e$ where for the new variables $(e, r) = 1$. Hence
\begin{align*}
\calS &= \frac{\sqrt{E}}{\sqrt{FGN}} \sum_{\substack{d, \gamma \\ (d, \gamma) = 1}} \mu(d) \mu(\gamma) \sum_{\kappa} \sum_r \sum_{\substack{\nu_1 | d \\ (\nu_1, r\kappa) = 1}}  \Psi\bfrac{d\gamma \kappa r}{Q}  \frac{\phi(\gamma \kappa r)}{\gamma \kappa r} \frac{\mu(\nu_1)}{\nu_1}  \\
&\cdot\sum_{\substack{e \neq 0 \\ (e, r) = 1}} \sumthree_{\substack{f, g, n \\ (fg, d\gamma\kappa r) = 1 \\ (n, d) = 1}} \ex\bfrac{ne\overline{\nu_1 fg}}{r} \widehat V\bfrac{Ee}{\nu_1 \gamma r} \tau_3(\gamma n)  V\bfrac{f}{F} V\bfrac{g}{G} V\bfrac{\gamma n}{N}.
\end{align*}

Our next step is to apply Poisson summation to the variable $f$, and for this we record the following lemma. There and later we write, for $a, b, c \in \mathbb{N}$,
\[
S(a, b; c) := \sumstar_{x \Mod{c}} \ex\left(\frac{ax+b\overline{x}}{c}\right)
\]
for the classical Kloosterman sum.
\begin{lem}\label{lem:2timespoisson}
Let $n, e, \nu_1, g, r, \alpha \in \mathbb{N}$ with $(g\nu_1, r) = 1$. Then
$$
\sum_{\substack{f\\(f, \alpha r)=1}} \ex\bfrac{ ne\overline{\nu_1 fg}}{r} V\bfrac{f}{F} = \frac{F}{r} \sum_{\substack{\nu_2|\alpha \\ (\nu_2, r) = 1}} \frac{\mu(\nu_2)}{\nu_2} \sum_{f} S(\overline{\nu_2} f, ne\overline{\nu_1 g}; r)\widehat{V}\bfrac{fF}{\nu_2 r}.
$$
\end{lem}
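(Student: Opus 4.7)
The plan is to mirror the proof of Lemma~\ref{lem:firstpoisson}: detect the coprimality $(f,\alpha)=1$ by Möbius inversion and then apply Poisson summation in $f$. First, I would write $\mathbf{1}_{(f,\alpha)=1} = \sum_{\nu_2 \mid (f,\alpha)} \mu(\nu_2)$ and observe that the explicit condition $(f,r)=1$ (needed for $\overline{\nu_1 f g}$ to make sense modulo $r$) forces any contributing $\nu_2 \mid f$ to satisfy $(\nu_2, r)=1$, since otherwise no $f$ divisible by such $\nu_2$ is coprime to $r$ and the inner sum is empty. After writing $f = \nu_2 f'$, the sum transforms into
\[
\sum_{\substack{\nu_2 \mid \alpha \\ (\nu_2,r)=1}} \mu(\nu_2) \sum_{(f',r)=1} \ex\bfrac{ne\,\overline{\nu_1 \nu_2 f' g}}{r} V\bfrac{\nu_2 f'}{F}.
\]

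Next, I would split the $f'$-sum according to residue classes $x \pmod{r}$ with $(x,r)=1$, and apply Poisson summation to each arithmetic progression. A standard change of variables in the resulting Fourier integral gives
\[
\sum_{f' \equiv x \Mod{r}} V\bfrac{\nu_2 f'}{F} = \frac{F}{\nu_2 r}\sum_{\ell \in \mathbb{Z}} \ex\bfrac{\ell x}{r} \widehat{V}\bfrac{F\ell}{\nu_2 r}.
\]
Substituting this back and exchanging the order of summation, the inner exponential sum over $x$ is exactly the Kloosterman sum $S(\ell,\, ne\,\overline{\nu_1 \nu_2 g};\, r)$.

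Finally, since $(\nu_2,r)=1$, the substitution $x \mapsto \overline{\nu_2}\,x$ is a bijection of $(\mathbb{Z}/r\mathbb{Z})^{\times}$ and converts this Kloosterman sum into $S(\ell\,\overline{\nu_2},\, ne\,\overline{\nu_1 g};\, r)$. Relabelling the dummy variable $\ell$ as $f$ and collecting the prefactor $\frac{F}{\nu_2 r}$ together with the outer sum over $\nu_2$ yields exactly the claimed identity. The entire argument is routine Poisson summation; the only point requiring care is the coprimality bookkeeping that ensures $(\nu_2,r)=1$ so that all modular inverses appearing remain well-defined. I do not anticipate any serious obstacle.
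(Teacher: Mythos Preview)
Your proposal is correct and follows essentially the same route as the paper: M\"obius inversion on $(f,\alpha)=1$, the observation that $(\nu_2,r)=1$ is forced, the change of variable $f=\nu_2 f'$, and Poisson summation on residue classes modulo $r$. The only cosmetic difference is that the paper parametrizes the residue classes by $a=\nu_2 f'\bmod r$ so that the Kloosterman sum $S(\overline{\nu_2}f,\,ne\overline{\nu_1 g};r)$ appears directly, whereas you obtain $S(\ell,\,ne\overline{\nu_1\nu_2 g};r)$ first and then apply the substitution $x\mapsto\overline{\nu_2}x$ inside the Kloosterman sum---these are the same step done in a different order.
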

\begin{proof}
Let $c = ne\overline{\nu_1 g}$. We have by M\"obius inversion, applied to the condition $(f,\alpha) = 1$, 
\begin{align*}
\sum_{\substack{f\\(f, \alpha r)=1}} \ex\bfrac{c \overline{f}}{r} V\bfrac{f}{F}
= \sum_{\nu_2 | \alpha} \mu(\nu_2) \sum_{\substack{(f,r) = 1 \\ \nu_2 | f}} \ex\bfrac{c \overline{f}}{r} V\bfrac{f}{F}.
\end{align*}
Since $(f,r) = 1$ and $\nu_2 | r$ we also have $(\nu_2, r) = 1$. Furthermore we can replace the condition $(f,r) = 1$ by opening into a sum over arithmetic progressions $f \equiv a \Mod{r}$ with $(a,r) = 1$. Making a change of variable $f \mapsto f \nu_2$ to remove the condition $\nu_2 | f$ we get that the above is equal to 
\begin{align*}
	& \sum_{\substack{\nu_2|\alpha \\ (\nu_2, r) = 1}} \mu(\nu_2) \sumstar_{a \Mod r} \ex\bfrac{c\overline a}{r} \sum_{f \equiv \overline{\nu_2} a \Mod r} V\bfrac{f\nu_2}{F}\\ & = \sum_{\substack{\nu_2|\alpha \\ (\nu_2, r) = 1}} \mu(\nu_2) \sumstar_{a \Mod r} \ex\bfrac{c\overline a}{r} \frac{F}{\nu_2 r} \sum_{f} \ex\bfrac{\overline{\nu_2}af}{r} \widehat{V}\bfrac{fF}{\nu_2 r}\\
&= \sum_{\substack{\nu_2|\alpha \\ (\nu_2, r) = 1}} \mu(\nu_2) \frac{F}{\nu_2 r} \sum_{f} S(\overline{\nu_2} f, c; r)\widehat{V}\bfrac{fF}{\nu_2 r}.
\end{align*}
\end{proof}

Applying Lemma \ref{lem:2timespoisson} with $\alpha = d\gamma \kappa$, we obtain
\begin{align*}
\mathcal{S} &= \frac{\sqrt{EF}}{\sqrt{GN}} \sum_{\substack{d, \gamma \\ (d, \gamma) = 1}}\mu(d) \mu (\gamma) \sum_{\kappa} \sum_{r} \frac{\phi(\gamma \kappa r)}{\gamma \kappa r^2} \Psi\bfrac{d\gamma \kappa r}{Q}  \sum_{\substack{\nu_1 \mid d \\ (\nu_1, r\kappa) = 1}} \frac{\mu(\nu_1)}{\nu_1} \sum_{\substack{\nu_2 \mid d\gamma \kappa \\ (\nu_2, r) = 1}}  \frac{\mu(\nu_2)}{\nu_2} \notag \\
&\cdot \sum_{\substack{e \neq 0 \\ (e, r) = 1}} \sumthree_{\substack{f, g, n \\(g, d\gamma \kappa r) = 1\\ (n, d)=1}}  S(\overline{\nu_2} f, ne\overline{\nu_1 g}; r) \tau_3(\gamma n)  \widehat{V}\bfrac{fF}{\nu_2 r} \widehat V\bfrac{Ee}{\nu_1\gamma r} V\bfrac{g}{G} V\bfrac{\gamma n}{N}.
\end{align*}
It is convenient to isolate the contribution arising from $f = 0$, which is
\begin{align*}
\mathcal{S}(f=0) &:=  \widehat{V}(0) \frac{\sqrt{EF}}{\sqrt{GN}} \sum_{\substack{d, \gamma \\ (d, \gamma) = 1}}\mu(d) \mu (\gamma) \sum_{\kappa}  \sum_{r} \frac{\phi(\gamma \kappa r)}{\gamma \kappa r^2} \Psi\bfrac{d\gamma \kappa r}{Q}  \sum_{\substack{\nu_1 \mid d \\ (\nu_1, r\kappa) = 1}} \frac{\mu(\nu_1)}{\nu_1} \sum_{\substack{\nu_2 \mid d\gamma \kappa \\ (\nu_2, r) = 1}}  \frac{\mu(\nu_2)}{\nu_2} \notag \\
&\cdot \sum_{\substack{e \neq 0 \\ (e, r) = 1}} \sumtwo_{\substack{g, n  \\ (g, d\gamma \kappa r) = 1\\ (n, d)=1}}   \tau_3(\gamma n) \fr_r(ne\overline{\nu_1 g}) \widehat V\bfrac{Ee}{\nu_1\gamma r} V\bfrac{g}{G} V\bfrac{\gamma n}{N},
\end{align*}
where we have the usual Ramanujan sum (see e.g.~\cite[formula (3.5)]{IK})
$$\fr_r(ne\overline{\nu_1 g}) := \sumstar_{a\Mod r} \ex\bfrac{ane\overline{\nu_1 g}}{r} = \sumstar_{a\Mod r} \ex\bfrac{a n}{r} = \mu\bfrac{r}{(n, r)} \frac{\phi(r)}{\phi\bfrac{r}{(n, r)}} \ll (n, r).
$$

\begin{lem}\label{lem:d>Df=0} With the above notation, assuming $EFG \gg Q^{2-\delta_0}$, we have
\begin{align*}
\mathcal{S}(f=0) \ll Q^{11/6+\delta_0/3 + \varepsilon}.
\end{align*}
\end{lem}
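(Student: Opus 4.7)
The plan is to prove Lemma~\ref{lem:d>Df=0} by straightforward absolute-value estimation, without invoking any deep input. Take absolute values inside $\mathcal{S}(f=0)$ and use only two ingredients: the Ramanujan sum bound
\[
|\fr_r(ne\overline{\nu_1 g})| = |\fr_r(ne)| \le (ne, r) \le (n,r)(e,r),
\]
valid because $(\nu_1 g, r) = 1$, and the rapid decay of $\widehat{V}$. By~\eqref{eqn:Vnew} and repeated integration by parts one has $\widehat{V}(\xi) \ll_A Q^{\varepsilon}(1+|\xi|)^{-A}$, so the factor $\widehat{V}(Ee/(\nu_1 \gamma r))$ confines $e$ to the range $|e| \ll \nu_1 \gamma r / E \cdot Q^{\varepsilon}$ up to negligible error.

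Next I would bound each remaining sum trivially: $\sum_{|e| \le X}(e,r) \le X \tau(r)$, $\sum_{n \asymp N/\gamma} \tau_3(\gamma n)(n, r) \ll Q^{\varepsilon} N / \gamma$, $\sum_{g \asymp G} 1 \ll G$, while the M\"obius-weighted sums over $\nu_1, \nu_2$ collapse into divisor factors absorbed into $Q^{\varepsilon}$. The one piece of bookkeeping that matters is that the factor $\nu_1$ appearing in the truncation length for $e$ cancels exactly against the weight $1/\nu_1$ attached to $\mu(\nu_1)$, reducing that sum to $\sum_{\nu_1 \mid d} 1 = \tau(d)$ rather than leaving behind an unwanted factor $\nu_1 \le d$. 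After these estimates, only a benign divisor-type sum $\sum_{d\gamma\kappa r \asymp Q} 1/(\gamma\kappa) \ll Q^{1+\varepsilon}$ remains, giving
\begin{align*}
|\mathcal{S}(f=0)| \ll Q^{1+\varepsilon}\, \frac{\sqrt{EF}}{\sqrt{GN}} \cdot \frac{GN}{E} = Q^{1+\varepsilon}\, \sqrt{\frac{FGN}{E}}.
\end{align*}

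To finish, I would apply the structural constraints. Since $E \ge F \ge G$ and $EFG \asymp M$ one has $E \ge M^{1/3}$, and $FGN = MN/E \le M^{2/3}N$; combined with $MN \ll Q^{3+\varepsilon}$ this yields $\sqrt{FGN/E} \le M^{1/6} N^{1/2} \le M^{-1/3} Q^{3/2+\varepsilon}$. The hypothesis $M \gg Q^{2-\delta_0}$ then gives $\sqrt{FGN/E} \ll Q^{5/6 + \delta_0/3 + \varepsilon}$, which after multiplication by $Q^{1+\varepsilon}$ produces the claimed bound $Q^{11/6 + \delta_0/3 + \varepsilon}$. There is no genuine analytic difficulty here: no spectral theory or Weil bounds are needed. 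The only mild subtlety is the $\nu_1$-cancellation noted above together with the observation that the truncation length for $e$ is short precisely because $E$ is the \emph{largest} of the three dissected variables, which is exactly what makes the crude Ramanujan sum estimate efficient enough.
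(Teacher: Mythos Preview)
Your proposal is correct and follows essentially the same route as the paper: bound the Ramanujan sum by $(n,r)$ (your extra factor $(e,r)$ equals $1$ since $(e,r)=1$ is already a summation constraint), truncate the $e$-sum via the decay of $\widehat V$, observe the $\nu_1$-cancellation, and then use $E\ge (EFG)^{1/3}$ together with $EFGN\ll Q^{3+\varepsilon}$ and $EFG\gg Q^{2-\delta_0}$ to reduce $Q^{1+\varepsilon}\sqrt{FGN/E}$ to the stated bound. The paper organizes the gcd bookkeeping by introducing $l=(n,r)$ explicitly while you absorb it into a divisor bound, but the two arguments are otherwise line-for-line the same.
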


\begin{proof}
We write $l = (n, r)$, and get
\begin{align*}
\mathcal{S}(f=0) &\ll  Q^{\varepsilon} \frac{\sqrt{EF}}{\sqrt{GN}} \sum_{\substack{d, \gamma \\ (d, \gamma) = 1}} \sum_{\kappa} \sum _{l \leq 2 Q} l \sum_{\substack{r\\ l|r}} \Psi\bfrac{d\gamma\kappa r}{Q}  \frac{1}{r} \sum_{\substack{\nu_1 \mid d \\ (\nu_1, r\kappa) = 1}} \frac{1}{\nu_1} \sum_{\substack{\nu_2 \mid d\gamma\kappa \\ (\nu_2, r) = 1}} \frac{1}{\nu_2} \notag \\
&\cdot \sum_{e \neq 0} \sum_g \sum_{\substack{n \\ l|n}}   \left|\widehat V\bfrac{Ee}{\nu_1 \gamma r} V\bfrac{g}{G} V\bfrac{\gamma n}{N}\right|.
\end{align*}
Since $\widehat V(x) \ll Q^\varepsilon x^{-A}$ for any $A$, here
$$\sum_{e \neq 0} \sum_{\substack{n\\ l|n}}   \left|V\left(\frac{\gamma n}{N}\right) \widehat V\bfrac{Ee}{\nu_1 \gamma r}\right| \ll Q^\varepsilon \frac{\nu_1 r\gamma}{E} \frac{N}{\gamma l}.
$$
Thus
\[
\mathcal{S}(f=0) \ll Q^{3\varepsilon} \frac{\sqrt{FGN}}{\sqrt{E}} \sum_{\substack{d, \gamma \\ (d, \gamma) = 1}} \sum_{\kappa} \sum_{l \leq 2Q} \sum_{\substack{r\\ l|r}} \Psi\bfrac{d \gamma \kappa r}{Q}  \frac{l}{r} \frac{r\gamma}{l\gamma}  \ll Q^{1+4\varepsilon} \sqrt{\frac{FGN}{E}}.
\]
Using $E \ge F\ge G$, $EFGN \ll Q^{3+\varepsilon}$ and $EFG \gg Q^{2-\delta_0}$, we see that
\[
\mathcal{S}(f=0) \ll Q^{1+4\varepsilon} \sqrt{\frac{Q^{3+\varepsilon}}{E^2}} \ll \frac{ Q^{5/2+5\varepsilon}}{Q^{(2-\delta_0)/3}},
\]
and the claim follows by adjusting $\varepsilon$.
\end{proof}

It now remains to examine
\begin{align} \label{masterEq}
  \begin{aligned}
\mathcal{S}(f\neq 0) &:= \frac{\sqrt{EF}}{\sqrt{GN}} \sum_{\substack{d, \gamma \\ (d, \gamma) = 1}}\mu(d) \mu (\gamma) \sum_{\kappa} \sum_{r} \frac{\phi(\gamma \kappa r)}{\gamma \kappa r^2} \Psi\bfrac{d\gamma \kappa r}{Q}  \sum_{\substack{\nu_1 \mid d \\ (\nu_1, r\kappa) = 1}} \frac{\mu(\nu_1)}{\nu_1} \sum_{\substack{\nu_2 \mid d\gamma \kappa \\ (\nu_2, r) = 1}}  \frac{\mu(\nu_2)}{\nu_2} \\
& \cdot \sumfour_{\substack{e , f, g, n \\ ef \neq 0 \\ (e, r) = (n, d) = 1 \\ (g, d\gamma \kappa r) = 1}}  S(\overline{\nu_2} f, ne\overline{\nu_1 g}; r) \tau_3(\gamma n)  \widehat{V}\bfrac{fF}{\nu_2 r} \widehat V\bfrac{Ee}{\nu_1 \gamma r} V\bfrac{g}{G} V\bfrac{\gamma n}{N}.
  \end{aligned}
  \end{align}
We split $\calS(f \neq 0)$ into two parts according to whether $d\gamma\kappa \leq D$ or not, writing, for a parameter $D \geq 1/2$,
\[
\mathcal{S}(f \neq 0) = \mathcal{S}(d\gamma\kappa \leq D, f \neq 0) + \mathcal{S}(d\gamma\kappa > D, f \neq 0).
\]
We shall prove the following two propositions 
\begin{prop}\label{prop:d>D}
Let $D \geq 1/2$ and $\delta_0 \in (0, 1/8)$. Assume that $EFG \geq Q^{2-\delta_0}$. We have
$$\mathcal{S}(d\gamma\kappa >D, f \neq 0) \ll   Q^{2+\varepsilon} \left(\frac{1}{D} + Q^{-1/6+\delta_0/3 }\right).
$$
Moreover, when $EFG \geq Q^{5/2 + \delta'}$ for some $\delta' \in (0, 1/2)$, we have, for any $\varepsilon > 0$,
$$\mathcal{S}(d\gamma\kappa >1/2, f \neq 0) \ll  Q^{2+\varepsilon} \left(Q^{-\delta'/2} + Q^{-1/6+\delta_0/3 } \right).
$$
\end{prop}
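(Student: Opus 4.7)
The plan is to apply Kuznetsov's formula to the sum over $r$ of Kloosterman sums appearing in~\eqref{masterEq}, and then bound the resulting spectral sums via a Deshouillers-Iwaniec-type spectral large sieve, following the strategy sketched in the introduction. For each fixed tuple $(d, \gamma, \kappa, \nu_1, \nu_2)$ with $d\gamma\kappa > D$, the factor $\Psi(d\gamma\kappa r/Q)$ forces $r \asymp Q/(d\gamma\kappa)$. One applies Kuznetsov in $r$ with test function built from the product of $\phi(\gamma\kappa r)/(\gamma\kappa r^2)$, $\Psi(d\gamma\kappa r/Q)$, $\widehat V(fF/(\nu_2 r))$ and $\widehat V(Ee/(\nu_1\gamma r))$. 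This yields a spectral expansion into Maass, holomorphic, and Eisenstein contributions, producing Hecke eigenvalues $\lambda_j(\overline{\nu_2}f)$ and $\lambda_j(ne\overline{\nu_1 g})$, exactly as in the informal calculation leading to~\eqref{eq:largesieve}.

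Using Hecke multiplicativity to split $\lambda_j(ne\overline{\nu_1 g}) = \lambda_j(e)\lambda_j(n)\lambda_j(\overline{\nu_1 g})$ (up to the usual corrections from common factors), I would group $e$ and $f$ on one side and $n$ and $g$ on the other, and then apply Cauchy-Schwarz exactly as in~\eqref{eq:tob}. Invoking a refinement of the Deshouillers-Iwaniec spectral large sieve in the form $G(G+N)\|\boldsymbol{\alpha}\|_2^2$ noted after~\eqref{eq:tog}, and then bounding by~\eqref{eq:intupp}, the Kim-Sarnak bound $\theta \leq 7/64 < 1/7$ is the crucial input that closes the argument, yielding the $Q^{-1/6+\delta_0/3}$ saving. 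The extra $\delta_0/3$ compared with the outline arises because here $EFG$ may exceed $Q^2$ by a factor $Q^{\delta_0}$. The $1/D$ term emerges from the tail summation $\sum_{d\gamma\kappa > D}$: the $1/r^2$ weight in~\eqref{masterEq} translates, via the normalization in Kuznetsov, into a gain of $(d\gamma\kappa)^{-2}$ per block, which sums to $\ll 1/D$.

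For the extreme range bound where $EFG \geq Q^{5/2+\delta'}$, a more direct route suffices. Here $N \ll Q^{1/2-\delta'+\varepsilon}$ is very short, so one can invoke the functional equation to convert the long $m$-sum into a short sum paired with hyper-Kloosterman sums controlled by Deligne's bound (as described in the outline for the range $M \geq Q^{5/2+\varepsilon}$), or, alternatively, apply Weil's bound $|S(\overline{\nu_2}f, ne\overline{\nu_1 g}; r)| \ll \sqrt{r}(f, neg, r)^{1/2}\tau(r)$ and sum the remaining variables trivially; the smallness of $N$ then produces the $Q^{-\delta'/2}$ saving, while the spectral route continues to give $Q^{-1/6+\delta_0/3}$.

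The main obstacle is engineering the spectral step so that the Ramanujan-type exponents $Y_1^{2\theta}$ and $Z_1^{2\theta}$ appearing in~\eqref{eq:tog} do not destroy the savings: this is precisely where the strict inequality $\theta < 1/7$ from Kim-Sarnak becomes essential. A secondary subtlety is the careful choice of Kuznetsov test function, so that the product of smooth weights in~\eqref{masterEq} leads to an integral transform with rapid decay in the spectral parameter and a controllable dependence on the auxiliary parameters $(d, \gamma, \kappa, \nu_1, \nu_2)$ when they are summed back in at the end.
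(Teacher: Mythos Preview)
Your proposal confuses the two complementary propositions. The spectral/Kuznetsov approach you describe is what the paper uses for Proposition~\ref{prop:d<D} (the case $d\gamma\kappa \leq D$), not for Proposition~\ref{prop:d>D}. For $d\gamma\kappa > D$ the paper proceeds quite differently: it applies a \emph{third} Poisson summation, now in the variable $g$, which converts the Kloosterman sum $S(\overline{\nu_2}f, ne\overline{\nu_1 g}; r)$ into a hyper-Kloosterman sum $\mathcal{KS}(\overline{\nu_2}f, \overline{\nu_3}g, \overline{\nu_1}ne; r)$. After isolating and bounding the $g=0$ contribution, the extreme-range estimate ($EFG \geq Q^{5/2+\delta'}$) follows immediately from Deligne's pointwise bound for the hyper-Kloosterman sum; this is morally the mechanism you gesture at, but it arises from Poisson in $g$, not from the functional equation. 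For the general first bound, the paper opens the hyper-Kloosterman sum, splits into two pieces $S_1$ (involving $e,n$) and $S_2$ (involving $f,g$), applies Cauchy--Schwarz in the $a\Mod r$ sum, and bounds each factor by the \emph{classical additive} large sieve---no spectral theory, no Kim--Sarnak.

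Your spectral route is unlikely to produce the required $1/D$ saving. The Kuznetsov step gains power from a long sum over the Kloosterman modulus; here $r \asymp Q/(d\gamma\kappa) < Q/D$ is short precisely when $d\gamma\kappa$ is large. Indeed, in the paper's spectral treatment of the complementary range the final bound carries a factor of $D$ (from $\nu_1\nu_2 \mid d\gamma\kappa$), so it \emph{degrades} with $D$ rather than improving. Your claim that the $1/r^2$ weight yields a $(d\gamma\kappa)^{-2}$ gain summing to $1/D$ is not correct as stated: since $r \asymp Q/(d\gamma\kappa)$, the factor $1/r^2$ contributes $(d\gamma\kappa)^2/Q^2$, which works against you. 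The $1/D$ in the paper arises instead from the $(Q/(d\gamma\kappa))^2$ that appears after the third Poisson and the large-sieve bound, summed over $d\gamma\kappa > D$.
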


\begin{prop}\label{prop:d<D}
Let $D \geq 1/2$. Assume that $Q^{2-\delta_0} \leq EFG \leq Q^{5/2 + \delta'}$ for some $\delta' \in (0, 1/2)$ and  $\delta_0 \in (0, 1/8)$. Then we have, for any $\varepsilon > 0$,
\[
\mathcal{S}(d\gamma \kappa \leq D, f \neq 0) \ll Q^{2+\varepsilon} \left(D Q^{-11/384+  \delta_0 / 2 +11 \delta'/192} + DQ^{-11/192+ 139 \delta_0 / 192} \right).
\]
\end{prop}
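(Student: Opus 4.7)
The plan is to bound $\mathcal{S}(d\gamma\kappa \leq D, f\neq 0)$ by applying the Kuznetsov trace formula in the $r$-variable in~\eqref{masterEq}, thereby spectrally decomposing the Kloosterman sum $S(\bar\nu_2 f, ne\bar{\nu_1 g}; r)$, and then invoking the refined spectral large sieve of Deshouillers--Iwaniec together with the Kim--Sarnak bound $\theta \leq 7/64$ towards the Ramanujan conjecture. The prefactor $D$ in the target bound will arise because the auxiliary parameters $d, \gamma, \kappa \leq D$ (together with $\nu_1 \mid d$ and $\nu_2 \mid d\gamma\kappa$) are eventually summed trivially; all real work takes place with those parameters fixed.

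First I would fix $d, \gamma, \kappa, \nu_1, \nu_2$ and dyadically decompose in $e, f, n, r$. Since $(g, r) = 1$, I would rearrange the Kloosterman sum so that $g$ enters as a level parameter rather than as a variable coprime to the modulus, possibly via a reciprocity identity that swaps the roles of $g$ and $r$. This prepares the sum for a Kuznetsov formula at level $g$ (times a small divisor of $d\gamma\kappa$), producing a spectral sum over Maass forms, Eisenstein series, and holomorphic cusp forms with Hecke eigenvalues $\lambda_j(nef)$, weighted by an oscillatory Bessel transform of the test function whose argument has size $1/X := \sqrt{N/(EFG)}$. Since $N \leq Q^{1+\delta_0+\varepsilon}$ (from $MN \ll Q^{3+\varepsilon}$) and $EFG \geq Q^{2-\delta_0}$, this argument is $\ll Q^{-1/2+3\delta_0/2+\varepsilon}$, so the effective spectral cutoff is $\kappa_j \ll 1$, with the oscillatory factor $X^{2i\kappa_j}$ becoming significant only for exceptional eigenvalues (bounded by Kim--Sarnak).

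Following the strategy outlined in the introduction, I would then apply Cauchy--Schwarz to separate the $n$-sum from the $ef$-sum, grouping $k = ef$ with coefficients $\alpha(k) = \sum_{ef=k} V(e/E_0) V(f/F_0)$, and insert free parameters $Y, Z \geq 1$ with $YZ = X^2 = EFG/N$. Each factor is then estimated by the refined spectral large sieve for $\Gamma_0(g)$ of the form~\eqref{eq:tog}, which keeps track of exceptional spectrum via the factors $(1+Y_1^{2\theta})$ and $(1+Z_1^{2\theta})$ with $Y_1 Z_1 \ll Q^2/(N+G)$. This yields~\eqref{eq:intupp}. Taking $\theta = 7/64$ and using the range constraints $G \leq (EFG)^{1/3} \leq Q^{5/6+\delta'/3}$ and $N \leq Q^{1+\delta_0+\varepsilon}$, one optimizes to obtain the first error term $DQ^{-11/384+\delta_0/2+11\delta'/192}$. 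The second error term $DQ^{-11/192+139\delta_0/192}$ would come from an endpoint regime (say, when $EFG$ is near the lower bound $Q^{2-\delta_0}$ and one factor must be bounded more crudely, or when a different grouping of the Cauchy--Schwarz is optimal).

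The main obstacle will be the Kuznetsov step itself, because $g$ is coprime to the Kloosterman modulus $r$ rather than dividing it, so one must use a reciprocity-type move to exchange them before the standard Kuznetsov formula at level $g$ can be applied. Managing this transformation while tracking the small parameters $d, \gamma, \kappa, \nu_1, \nu_2$ so they contribute at most $D^{1+o(1)}$, and verifying that the strict inequality $\theta \leq 7/64 < 1/7$ indeed yields positive power savings uniformly across the range $Q^{2-\delta_0} \leq EFG \leq Q^{5/2+\delta'}$ after the final optimization, is where the bulk of the technical work will lie.
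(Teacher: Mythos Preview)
Your overall plan---Kuznetsov plus a Kim--Sarnak-enhanced Deshouillers--Iwaniec large sieve---is what the paper does, but the ``reciprocity'' step you propose is a misconception and, if you actually tried to carry it out, would send you down an unnecessary detour. The point is that the Deshouillers--Iwaniec machinery (their Theorem~10, refined here as Lemma~\ref{le:Klo1}) is built for sums of Kloosterman sums of the exact shape $S(\pm n, m\overline{r}, sc)$ with $(c,r)=1$ and $(r,s)=1$; the Kuznetsov formula inside the proof of that lemma is applied at level $rs$ using the cusp pair $(\infty,1/s)$, so the ``level parameter'' $r$ is coprime to the modulus $sc$ by design, not by accident. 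In our situation, after the trivial rewriting $S(\overline{\nu_2}f, ne\overline{\nu_1 g}; r)=S(ef, n\overline{\nu_1\nu_2 g}; r)$, the group $\nu_1\nu_2 g$ already sits in the position of DI's $r$, and the Kloosterman modulus (the paper's $r$) becomes $sc$. No swap of $g$ and $r$ is needed.

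What the paper actually has to do before invoking Lemma~\ref{le:Klo1} is purely cosmetic bookkeeping to strip the $c$-sum of arithmetic dependencies: expand $\phi(\gamma\kappa r)/(\gamma\kappa r)$ multiplicatively (introducing $\gamma_1,\gamma_2,\kappa_1,\kappa_2,r_1,r_2$) and remove the condition $(e,r_2)=1$ by M\"obius (introducing $\omega$). After a dyadic split one lands on~\eqref{eq:abnamed} with $\widetilde r=\nu_1\nu_2 g$, $\widetilde s=r_1\omega$, $\widetilde c=r_2$, $\widetilde m=n$, $\widetilde n=\omega ef$, and the parameter $\widetilde X\asymp\sqrt{EFG/N}\gg 1$. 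Lemma~\ref{le:Klo1} (which internally performs exactly the Cauchy--Schwarz and large-sieve step you describe, with the $Y,Z$ split and the exponent $7/64$) then gives~\eqref{eq:Tbound}. The two error terms in the proposition come from the dichotomy $\Omega G\geq N$ versus $\Omega G<N$: the first case is worst at $G\approx Q^{5/6+\delta'/3}$ and yields the $Q^{-11/384+\delta_0/2+11\delta'/192}$ term, while the second is worst at $E=F=G=Q^{(2-\delta_0)/3}$ and yields the $Q^{-11/192+139\delta_0/192}$ term. Your description of the second term as an ``endpoint regime near $EFG=Q^{2-\delta_0}$'' is correct in spirit.
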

Proposition~\ref{prop:d>D} will be proven in Section~\ref{sec:dgammabig} and Proposition~\ref{prop:d<D} will be proven in Section~\ref{se:dgamsmall} (after deriving the necessary bound for averages of Kloosterman sums in Section~\ref{sec:AverageKloo}).

\begin{proof}[Proof of Proposition~\ref{prop:unbalanced} assuming Propositions~\ref{prop:d>D} and~\ref{prop:d<D}]
Combining Propositions~\ref{prop:d>D} and~\ref{prop:d<D} with Lemma~\ref{lem:d>Df=0} we obtain
\[
\mathcal{S} \ll Q^{2+\varepsilon} \left(\frac{1}{D} + Q^{-1/6+\delta_0/3} + Q^{-\delta'/2}+ D Q^{-11/384+\delta_0 / 2  +11 \delta'/192} + DQ^{-11/192+ 139 \delta_0 / 192} \right).
\]
Now the second term is always smaller than the last two terms. Furthermore the fifth term is always smaller than the fourth term for $\delta_0 < \frac 18 < \frac{11}{86}$. Hence Proposition~\ref{prop:unbalanced} follows.
\end{proof}

\section{The case $d\gamma \kappa$ is large} \label{sec:dgammabig}
In case $d \gamma \kappa > D$, we proceed to apply Poisson summation one more time, to the sum over $g$. The following lemma takes care of this step.

\begin{lem}\label{lem:3timespoisson}
Let $\nu_1, \nu_2, f, n, e, r \in \mathbb{N}$ with $(\nu_1\nu_2, r) = 1$. Then
\begin{align*}
\sum_{\substack{g\\(g, \alpha r) = 1}}S(\overline{\nu_2} f, ne\overline{\nu_1 g}; r) V\bfrac{g}{G}= \sum_{\substack{\nu_3|\alpha \\ (\nu_3, r) = 1}}\frac{\mu(\nu_3)}{\nu_3} \frac{G}{r} \sum_{g} \mathcal {KS}(\overline{\nu_2}f, \overline{\nu_3} g, \overline{\nu_1 } ne;r) \widehat{V}\bfrac{gG}{\nu_3 r},
\end{align*}
where the hyper-Kloosterman sum $\mathcal {KS}(\overline{\nu_2}f, \overline{\nu_3}g, \overline{\nu_1} ne; r)$ is defined by
$$\mathcal {KS}(\overline{\nu_2} f, \overline{\nu_3} g, \overline{\nu_1} ne; r) := \sumtwostar_{a, b \Mod r} \ex\bfrac{a\overline{\nu_2} f + b \overline{\nu_3} g + \overline{ab \nu_1}ne}{r}.
$$
\end{lem}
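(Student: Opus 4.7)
The plan is to mimic the proof of Lemma~\ref{lem:2timespoisson}: first remove the coprimality condition $(g,\alpha)=1$ by M\"obius inversion, then split the resulting sum in arithmetic progressions modulo $r$ and apply Poisson summation, and finally identify the arising exponential sum as the advertised hyper-Kloosterman sum by a change of variables.

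First I would write
\[
\sum_{\substack{g\\(g,\alpha r)=1}} S(\overline{\nu_2}f,ne\overline{\nu_1g};r) V\bfrac{g}{G}
= \sum_{\nu_3\mid \alpha} \mu(\nu_3) \sum_{\substack{(g,r)=1\\\nu_3\mid g}} S(\overline{\nu_2}f,ne\overline{\nu_1g};r) V\bfrac{g}{G},
\]
and observe that $\nu_3\mid g$ together with $(g,r)=1$ forces $(\nu_3,r)=1$. Substituting $g\mapsto \nu_3 g$, splitting the $g$-sum into residue classes $g\equiv g_0\Mod r$ with $(g_0,r)=1$, and applying the standard Poisson formula
\[
\sum_{g\equiv g_0\Mod r} V\bfrac{\nu_3 g}{G} = \frac{G}{\nu_3 r}\sum_{k}\ex\bfrac{kg_0}{r}\widehat V\bfrac{kG}{\nu_3 r},
\]
I obtain
\[
\sum_{\substack{g\\(g,\alpha r)=1}} S(\overline{\nu_2}f,ne\overline{\nu_1g};r) V\bfrac{g}{G}
=\sum_{\substack{\nu_3\mid\alpha\\(\nu_3,r)=1}}\frac{\mu(\nu_3)}{\nu_3}\frac{G}{r}\sum_{k}\widehat V\bfrac{kG}{\nu_3 r} T(k),
\]
where
\[
T(k):=\sumstar_{g_0\Mod r}\ex\bfrac{kg_0}{r}\sumstar_{a\Mod r}\ex\bfrac{a\overline{\nu_2}f+\overline{a}\,\overline{\nu_1\nu_3 g_0}\,ne}{r}.
\]

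The remaining task is to rewrite $T(k)$ as $\mathcal{KS}(\overline{\nu_2}f,\overline{\nu_3}k,\overline{\nu_1}ne;r)$. I would make the two successive invertible changes of variables $g_0 = \overline{b'}$ and then $b' = \overline{\nu_3}\,b$ in the $g_0$-sum (both legitimate since $\nu_3$ is coprime to $r$); the first turns $\overline{\nu_1\nu_3 g_0}$ into $b'\,\overline{\nu_1\nu_3}$ and $kg_0$ into $k\overline{b'}$, and the second further converts these to $\overline{a b \nu_1}$ and $b\,\overline{\nu_3}k$ respectively after combining with the $a$-variable. Renaming $k$ to $g$ then produces exactly
\[
T(k)= \sumtwostar_{a,b\Mod r}\ex\bfrac{a\overline{\nu_2}f + b\overline{\nu_3}k +\overline{ab\nu_1}\,ne}{r},
\]
which is the claimed hyper-Kloosterman sum. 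This yields the stated identity.

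There is no real obstacle here beyond bookkeeping: the Poisson step is standard once the coprimality condition is removed, and the only slightly delicate point is the pair of substitutions in the $g_0$-variable needed to bring the factors $\overline{\nu_3}$ to the right places so as to match the exact definition of $\mathcal{KS}$. Care must be taken that $(\nu_1\nu_2\nu_3,r)=1$ throughout so that all the modular inverses make sense; this is automatic from $(\nu_1,r\kappa)=1$, $(\nu_2,r)=1$, and $(\nu_3,r)=1$ coming from earlier.
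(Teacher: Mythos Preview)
Your approach is correct and essentially identical to the paper's: the paper opens the Kloosterman sum over $a$ and then applies Lemma~\ref{lem:2timespoisson} to the remaining $g$-sum, which packages exactly the M\"obius--Poisson--change-of-variables steps you carry out by hand. One small slip: the second substitution should be $b'=\nu_3\,\overline{b}$ (equivalently, directly $g_0=\overline{\nu_3}\,b$), not $b'=\overline{\nu_3}\,b$; with the corrected version both exponents match the definition of $\mathcal{KS}$ on the nose.
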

\begin{proof}
We have
\begin{align*}
\sum_{\substack{g\\(g, \alpha r) = 1}}S(\overline{\nu_2}f, ne\overline{\nu_1 g}; r) V\bfrac{g}{G}
&= \sumstar_{a\Mod r} \ex\bfrac{a\overline{\nu_2}f}{r} \sum_{\substack{g\\(g, \alpha r) = 1}} \ex\bfrac{\overline{a}ne\overline{\nu_1 g}}{r} V\bfrac{g}{G}.
\end{align*}
Lemma~\ref{lem:2timespoisson} yields
\begin{align*}
\sum_{\substack{g\\ (g, \alpha r) = 1}} \ex\bfrac{\overline{a} ne \overline{\nu_1 g}}{r}  V\bfrac{g}{G} = \frac{G}{r} \sum_{\substack{\nu_3 \mid \alpha \\ (\nu_3, r) = 1}} \frac{\mu(\nu_3)}{\nu_3} \sum_g S(\overline{\nu_3}g, \overline{a \nu_1}ne; r) \widehat V\bfrac{Gg}{\nu_3 r},
\end{align*}
and the claim follows.
\end{proof}

Applying Lemma \ref{lem:3timespoisson} with $\alpha = d\gamma \kappa$ gives
\begin{align*}
\mathcal{S}(d\gamma\kappa >D, f\neq 0) &:= \frac{\sqrt{EFG}}{\sqrt{N}} \sum_{\substack{d, \gamma, \kappa \\ (d, \gamma) = 1 \\ d\gamma\kappa >D}}\sum_{r} \frac{\phi(\gamma \kappa r)}{\gamma \kappa r^3} \Psi\bfrac{d\gamma\kappa r}{Q} \mu(\gamma) \mu(d) \sum_{\substack{\nu_1 \mid d \\ (\nu_1, \kappa r) = 1}} \frac{\mu(\nu_1)}{\nu_1} \sumtwo_{\substack{\nu_2, \nu_3 \mid d\gamma\kappa \\ (\nu_2 \nu_3, r) = 1}} \frac{\mu(\nu_2)}{\nu_2}\frac{\mu(\nu_3)}{\nu_3} \\
&\cdot \sumfour_{\substack{e , f, g, n \\ ef \neq 0 \\ (e, r) = (n, d)=1}} \mathcal {KS}(\overline{\nu_2}f, \overline{\nu_3} g, \overline{\nu_1 } ne;r) \tau_3(\gamma n)   \widehat V\bfrac{Ee}{\nu_1\gamma r} \widehat{V}\bfrac{fF}{\nu_2 r} \widehat V\bfrac{Gg}{\nu_3 r} V\bfrac{\gamma n}{N}.
\end{align*}
We write $\mathcal{S}(d\gamma\kappa >D, f\neq 0, g=0)$ for the contribution of $g = 0$ terms to $\mathcal{S}(d\gamma\kappa >D, f\neq 0)$, and bound it using the following lemma.

\begin{lem}\label{lem:d>Dg=0}
We have, for any $D \geq 1/2$,
$$
\mathcal{S}(d\gamma \kappa >D, f \neq 0, g=0) \ll Q^{7/6 + 2\delta_0/3 + \varepsilon}.
$$
\end{lem}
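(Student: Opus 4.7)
The main observation is that when $g=0$, the hyper-Kloosterman sum degenerates. Writing $c = ab \Mod{r}$ in
$$
\mathcal{KS}(\overline{\nu_2}f,\,0,\,\overline{\nu_1}ne;r)=\sumtwostar_{a,b\Mod r}\ex\!\left(\frac{a\overline{\nu_2}f+\overline{ab\nu_1}ne}{r}\right),
$$
the variables $a$ and $c$ decouple (for each fixed $a\in(\mathbb Z/r)^*$, $b=c\overline a$ ranges over $(\mathbb Z/r)^*$ as $c$ does). Since $(\nu_1\nu_2,r)=1$, after linear change of variable in each star-sum we obtain the clean factorization
$$
\mathcal{KS}(\overline{\nu_2}f,\,0,\,\overline{\nu_1}ne;r)=\fr_r(f)\,\fr_r(ne),
$$
a product of two Ramanujan sums, which I bound by $|\fr_r(f)|\le(f,r)$ and, using $(e,r)=1$, $|\fr_r(ne)|\le(n,r)$.

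Next I apply the triangle inequality to the $g=0$ contribution and use the rapid decay of $\widehat V$, so that the $e$- and $f$-sums are effectively truncated to $|e|\ll \nu_1\gamma rQ^\varepsilon/E$ and $|f|\ll \nu_2rQ^\varepsilon/F$, while $n\ll N/\gamma$. Using the standard divisor-function bound $\sum_{f\le X}(f,r)\ll Xr^\varepsilon$ and $\sum_{n\le N/\gamma}(n,r)\ll (N/\gamma)r^\varepsilon$, together with $\sum_{|e|\le \nu_1\gamma rQ^\varepsilon/E}1\ll \nu_1\gamma rQ^\varepsilon/E$, I obtain for each $(\nu_1,\nu_2,\nu_3)$ the inner bound
$$
\sum_{e,f,n}\bigl|\fr_r(f)\fr_r(ne)\,\widehat V\!\bigl(\tfrac{Ee}{\nu_1\gamma r}\bigr)\widehat V\!\bigl(\tfrac{fF}{\nu_2 r}\bigr)V\!\bigl(\tfrac{\gamma n}{N}\bigr)\bigr|\ll Q^{\varepsilon}\,\frac{\nu_1\nu_2\,r^2\,N}{EF}.
$$
The factors $\nu_1\nu_2$ cancel with the weights $1/(\nu_1\nu_2\nu_3)$ (after summing trivially over $\nu_3$, which contributes $Q^\varepsilon$), leaving
$$
\mathcal S(d\gamma\kappa>D,f\ne 0,g=0)\ll \frac{\sqrt{EFG}\,\sqrt N}{EF}\,Q^{\varepsilon}\sum_{d\gamma\kappa r\sim Q}\frac{1}{\gamma\kappa r}.
$$

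Finally, the remaining divisor sum is elementary: writing $q=d\gamma\kappa r$,
$$
\sum_{q\sim Q}\sum_{d\gamma\kappa r=q}\frac{1}{\gamma\kappa r}\ll Q^{1+\varepsilon},
$$
by splitting on $d\mid q$. Thus the contribution is bounded by $\frac{\sqrt{EFGN}}{EF}\,Q^{1+\varepsilon}$. Since $EFGN\ll Q^{3+\varepsilon}$ we have $\sqrt{EFGN}\ll Q^{3/2+\varepsilon}$, and since $E\ge F\ge G$ with $EFG\ge Q^{2-\delta_0}$ we get $EF\ge (EFG)^{2/3}\ge Q^{(4-2\delta_0)/3}$, so
$$
\mathcal S(d\gamma\kappa>D,f\ne 0,g=0)\ll \frac{Q^{3/2}}{EF}\,Q^{1+\varepsilon}\ll Q^{1+3/2-4/3+2\delta_0/3+\varepsilon}=Q^{7/6+2\delta_0/3+\varepsilon},
$$
which is the claimed bound. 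The restriction $d\gamma\kappa>D$ is not needed here — everything works via trivial bounds, the reason being that we have already won a power of $Q$ from the collapse of the hyper-Kloosterman sum into a product of Ramanujan sums. There is no real obstacle in this step; the only point to check carefully is the change-of-variable argument producing the factorization.
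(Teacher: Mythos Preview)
Your proof is correct and follows essentially the same route as the paper: factorize the degenerate hyper-Kloosterman sum as $\fr_r(f)\fr_r(n)$, bound the Ramanujan sums by gcd's, sum the inner variables to get $\ll Q^\varepsilon \nu_1\nu_2 r^2 N/(EF)$, and finish with $EF\ge (EFG)^{2/3}\ge Q^{(4-2\delta_0)/3}$. One small slip: in your intermediate display the weight should be $\frac{\phi(\gamma\kappa r)}{\gamma\kappa r}$ (not $\frac{1}{\gamma\kappa r}$) after cancelling $r^2$ against $r^{-3}$, but since $\frac{\phi(\gamma\kappa r)}{\gamma\kappa r}\le 1$ and $\sum_{d\gamma\kappa r\sim Q}1=\sum_{q\sim Q}\tau_4(q)\ll Q^{1+\varepsilon}$, the conclusion is unaffected.
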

\begin{proof}
Note that, for $(\nu_1 \nu_2, r) = 1$,
\begin{align*}
\mathcal {KS}(\overline{\nu_2}f,0, \overline{\nu_1} ne; r)
=\sumtwostar_{a, b \Mod r} \ex\bfrac{a\overline{\nu_2}f + \overline{ab\nu_1} ne }{r}
= \sumtwostar_{a, b \Mod r} \ex\bfrac{a f + b n }{r}
\end{align*}
by a change of variables, and the above is then $\fr_r(f) \fr_r(n)$ (recall that $\fr$ denotes the Ramanujan sum).  Using the bound 
$$\fr_r(f) \fr_r(n) \ll (r, f) (r, n),
$$
and writing $\ell = (r, f)$ and $k = (r, n)$, we see that
\begin{align*}
\mathcal{S}(d\gamma \kappa >D, f \neq 0, g=0) &\ll Q^\varepsilon \frac{\sqrt{EFG}}{\sqrt{N}} \sum_{\substack{d, \gamma, \kappa \\ D \leq d\gamma\kappa \leq 2Q}}\sum_{k, \ell \leq 2Q} k \ell \sum_{\substack{r \sim Q/(d \gamma \kappa) \\ k | r, \; \ell \mid r}} \frac{1}{r^2} \sum_{\nu_1 \mid d} \frac{1}{\nu_1} \sumtwo_{\substack{\nu_2, \nu_3 \mid d\gamma\kappa}} \frac{1}{\nu_2 \nu_3} \\
&\cdot \sumthree_{\substack{e, f, n \\ ef \neq 0 \\ k \mid n, \; \ell \mid f}} \left|\widehat V\bfrac{Ee}{\nu_1\gamma r} \widehat{V}\bfrac{fF}{\nu_2 r} V\bfrac{\gamma n}{N}\right|.
\end{align*}
Using that $e, f \neq 0$ and that, for any $A > 0$, $\widehat{V}(x) \ll Q^{\varepsilon } x^{-A}$, we obtain that, for any $k, \ell$,
\begin{align*}
&\sumthree_{\substack{e, f, n \\ ef \neq 0 \\ k \mid n, \; \ell \mid f}} \left|\widehat V\bfrac{Ee}{\nu_1\gamma r} \widehat{V}\bfrac{fF}{\nu_2 r} V\bfrac{\gamma n}{N}\right|  \ll Q^{3\varepsilon} \frac{N}{\gamma k} \frac{\nu_1 r \gamma }{E} \frac{\nu_2 r}{F\ell} \ll Q^{3\varepsilon} \frac{N \nu_1 \nu_2}{k\ell EF} r^2.
\end{align*}

Hence
\begin{align*}
\mathcal{S}(d \gamma \kappa >D, f\neq 0, g=0) &\ll Q^{4\varepsilon} \frac{\sqrt{GN}}{\sqrt{EF}} \sum_{\substack{d, \gamma, \kappa \\ D \leq d\gamma\kappa \leq 2Q}} \sumtwo_{k, \ell \leq 2Q} \sum_{\substack{r \sim Q/(d \gamma \kappa) \\k|r, \; \ell|r}} 1\\
&\ll  Q^{4\varepsilon} \frac{\sqrt{GN}}{\sqrt{EF}} \sum_{\substack{d, \gamma, \kappa \\ D \leq d \gamma \kappa \leq 2Q}} \sumtwo_{k, \ell \leq 2Q} \frac{Q}{d\gamma \kappa [k, \ell]} \ll  Q^{1+5\varepsilon} \frac{\sqrt{GN}}{\sqrt{EF}}.
\end{align*}
Since $E \geq F \geq G$, $NEFG \ll Q^{3+\varepsilon}$, and $EFG \gg Q^{2-\delta_0}$, we obtain 
\begin{align*}
\mathcal{S}(d \gamma \kappa >D, f\neq 0, g=0) \ll Q^{1+5\varepsilon}\sqrt{\frac{GN}{EF}} \ll Q^{1+ 5\varepsilon} \frac{Q^{3/2+\varepsilon/2}}{EF} \ll \frac{Q^{5/2+6\varepsilon}}{Q^{\frac{2}{3}(2-\delta_0)}} \ll Q^{7/6 + 2\delta_0/3 + 6\varepsilon},
\end{align*}
and the claim follows by adjusting $\varepsilon$.
\end{proof}

We now proceed to bound 
\begin{align}\label{eqn:d>Dfnotzerognotzero}
\begin{aligned}
&\mathcal{S}(d \gamma \kappa>D, f\neq 0, g\neq 0)\\
&:=  \frac{\sqrt{EFG}}{\sqrt{N}} \sum_{\substack{d, \gamma, \kappa \\ (d, \gamma) = 1 \\ d\gamma\kappa >D}}\sum_{r} \frac{\phi(\gamma \kappa r)}{\gamma \kappa r^3} \Psi\bfrac{d\gamma\kappa r}{Q} \mu(\gamma) \mu(d) \sum_{\substack{\nu_1 \mid d \\ (\nu_1, \kappa r) = 1}} \frac{\mu(\nu_1)}{\nu_1} \sumtwo_{\substack{\nu_2, \nu_3 \mid d\gamma\kappa \\ (\nu_2 \nu_3, r) = 1}} \frac{\mu(\nu_2)}{\nu_2}\frac{\mu(\nu_3)}{\nu_3} \\
&\quad \cdot \sumfour_{\substack{e , f, g, n \\ efg \neq 0 \\ (e, r) = (n, d)=1}} \mathcal {KS}(\overline{\nu_2}f, \overline{\nu_3} g, \overline{\nu_1 } ne;r) \tau_3(\gamma n)   \widehat V\bfrac{Ee}{\nu_1\gamma r} \widehat{V}\bfrac{fF}{\nu_2 r} \widehat V\bfrac{Gg}{\nu_3 r} V\bfrac{\gamma n}{N}.
\end{aligned}
\end{align}

Let us first see what happens when we simply bound the above sum using a point-wise bound for the hyper-Kloosterman sum.  Here, we use the bound 
$$ \mathcal {KS}(\overline{\nu_2}f, \overline{\nu_3}g, \overline{\nu_1} ne; r) \ll_{\varepsilon} r^{1+\varepsilon} (f, r),$$
valid for any $\varepsilon > 0$. This was proven by Deligne for prime $r$, and extended to general $r$ by R.A. Smith (see~\cite[Theorem 6]{Smith}). The result of Smith is far more detailed; for instance, one may replace $(f, r)$ with $(ne, r)$ or $(g, r)$.  We split the sum on the right hand side of~\eqref{eqn:d>Dfnotzerognotzero} according to $\ell = (r, f)$. Notice that, for given $\ell \in \mathbb{N}$,
\begin{align*}
&\sumtwo_{\substack{e, f, g, n \\ efg \neq 0, \; \ell \mid f \\ (e, r) = (n, d) = 1}}  \mathcal {KS}(\overline{\nu_2}f, \overline{\nu_3}g, \overline{\nu_1} ne; r) \tau_3(\gamma n)   \widehat V\bfrac{Ee}{\nu_1 \gamma r} \widehat{V}\bfrac{fF}{\nu_2 r} \widehat V\bfrac{Gg}{\nu_3 r}  V\bfrac{\gamma n}{N}\\
&\ll Q^{4\varepsilon} r \ell \frac{\nu_1 \gamma r}{E}\frac{\nu_2 r}{\ell F}\frac{\nu_3 r}{G} \frac{N}{\gamma}  \ll Q^{4\varepsilon} \frac{\nu_1 \nu_2 \nu_3 r^4 N }{EFG\gamma}.
\end{align*}
From this, we see that
\begin{align}
\label{eq:unbaltriv}
\mathcal{S}(d\gamma\kappa >D, f\neq 0, g\neq 0) &\ll  Q^{5\varepsilon} \frac{\sqrt{N}}{\sqrt{EFG}} \sum_{\substack{d, \gamma, \kappa \\ (d, \gamma) = 1 \\ d\gamma\kappa >D}}\sum_{\ell} \sum_{\substack{r \\ \ell \mid r}} \Psi\bfrac{d\gamma\kappa r}{Q} r^2 \ll  Q^{6\varepsilon} \frac{\sqrt{N}}{\sqrt{EFG}} \cdot \frac{Q^3}{D^2}.
\end{align}

For $\delta' > 0$, this bound is $\ll Q^{2-\delta'/2+8\varepsilon}$ when  
\begin{align*}
D^2 \ge \frac{\sqrt{N}}{\sqrt{EFG}} Q^{1 +\delta'/2-2\varepsilon}.
\end{align*}
The right hand side is $< 1$ in the very unbalanced case when $M \asymp EFG \geq Q^{5/2 + \delta'}$, so the above computation already suffices and, adjusting $\varepsilon$, we obtain the following:

\begin{lem}\label{lem:EFGverylarge}
If $EFG \geq Q^{5/2 + \delta'}$, then 
\begin{align*}
\mathcal{S}(d\gamma\kappa >1/2, f\neq 0, g\neq 0) &\ll  Q^{2+\varepsilon - \delta'/2}.
\end{align*}
\end{lem}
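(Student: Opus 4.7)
The plan is to read off the lemma directly from the pointwise estimate~\eqref{eq:unbaltriv} by specializing to $D = 1/2$, which covers the entire range $d\gamma\kappa > 1/2$. The hypothesis $EFG \geq Q^{5/2+\delta'}$ is exactly what makes this trivial bound already sharp enough to beat $Q^{2}$ by a power saving.

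Concretely, the bound~\eqref{eq:unbaltriv}, established just above the statement by applying Deligne--Smith's pointwise estimate $\mathcal{KS}(\overline{\nu_2}f,\overline{\nu_3}g,\overline{\nu_1}ne;r) \ll_\varepsilon r^{1+\varepsilon}(f,r)$ and bounding the sums over $e,f,g,n$ trivially using the rapid decay of $\widehat{V}$, reads
\[
\mathcal{S}(d\gamma\kappa > D, f \neq 0, g \neq 0) \ll Q^{6\varepsilon} \frac{\sqrt{N}}{\sqrt{EFG}} \cdot \frac{Q^{3}}{D^{2}}.
\]
Since this estimate is valid for any $D \geq 1/2$, I will simply take $D = 1/2$. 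Next, I use the a priori constraint $NEFG \ll Q^{3+\varepsilon}$ (inherited from the rapid decay of $W_{0,0}(m,n;q)$ in Lemma~\ref{lem:weightW}, which forces $mn \ll Q^{3+\varepsilon}$ up to negligible error) to estimate
\[
\frac{\sqrt{N}}{\sqrt{EFG}} \ll \frac{Q^{3/2+\varepsilon/2}}{EFG}.
\]
Substituting this back yields
\[
\mathcal{S}(d\gamma\kappa > 1/2, f \neq 0, g \neq 0) \ll Q^{7\varepsilon} \cdot \frac{Q^{9/2}}{EFG} \ll \frac{Q^{9/2 + 7\varepsilon}}{Q^{5/2 + \delta'}} = Q^{2 - \delta' + 7\varepsilon},
\]
using the hypothesis $EFG \geq Q^{5/2+\delta'}$ at the last step. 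This is comfortably stronger than the claimed bound $Q^{2+\varepsilon-\delta'/2}$, so the lemma follows after adjusting $\varepsilon$.

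There is no real obstacle: the entire content of the lemma is a re-packaging of the crude pointwise estimate on the hyper-Kloosterman sum together with the size constraint on $EFG$. The genuine difficulty in the unbalanced regime arises only when $EFG < Q^{5/2+\delta'}$, where this trivial argument fails and one must instead apply Kuznetsov's formula followed by a refined spectral large sieve (as outlined in the introduction and carried out in Proposition~\ref{prop:d<D}); for $EFG \geq Q^{5/2+\delta'}$ the Deligne bound on its own is already decisive.
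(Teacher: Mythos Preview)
Your proof is correct and follows essentially the same approach as the paper: specialize the pointwise estimate~\eqref{eq:unbaltriv} to $D=1/2$, use $NEFG \ll Q^{3+\varepsilon}$ to bound $\sqrt{N}/\sqrt{EFG}$, and invoke the hypothesis $EFG \geq Q^{5/2+\delta'}$. Your presentation is in fact slightly more direct, and you correctly observe that the argument actually yields the stronger exponent $2-\delta'+O(\varepsilon)$; the paper only records $2-\delta'/2$ because that is all Proposition~\ref{prop:d>D} requires.
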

We note that this lemma along with Lemmas \ref{lem:d>Df=0} and \ref{lem:d>Dg=0} proves the second part of Proposition \ref{prop:d>D}.

Towards the other extreme in the unbalanced sum case, when $EFG = Q^{2-\delta_0}$ and $N \ll Q^{1+\delta_0 +\varepsilon}$, we see that we need
\begin{align*}
D \ge Q^{\frac 14 + \frac{\delta_0}{2} + \frac{\delta'}{4}},
\end{align*}
which is far too large for our purposes.  In this range, we need to take better advantage of the average over $q$.  This is the content of the rest of this section. 
\begin{rem}
\label{rem:triviImpr} 
It might be possible to slightly improve on the error term in Theorem~\ref{thm:main} through using the bound~\eqref{eq:unbaltriv} in a slightly larger region. However, this would complicate the calculations and we have decided not to pursue this.
\end{rem}

The first part of Proposition \ref{prop:d>D} follows from Lemma \ref{lem:d>Df=0}, Lemma \ref{lem:d>Dg=0} and the following lemma.
\begin{lem} \label{lem:efgnotzerod>D} Let notations be as above and assume that $EFG \geq Q^{2-\delta_0}$. We have
$$ \mathcal{S}(d \gamma\kappa>D, f\neq 0, g\neq 0) \ll \frac{Q^{2+\varepsilon}}{D} + \frac{Q^{5/3 +2\delta_0/3 + \varepsilon}}{D^{1/2}}. $$
\end{lem}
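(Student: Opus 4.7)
The proof will start from the expression~\eqref{eqn:d>Dfnotzerognotzero} obtained after three successive Poisson summations. Here the dual variables $e,f,g$ are constrained to short ranges by the rapid decay of $\widehat V$, namely $|e|\ll Q^{\varepsilon}\nu_1\gamma r/E$, $|f|\ll Q^{\varepsilon}\nu_2 r/F$, $|g|\ll Q^{\varepsilon}\nu_3 r/G$, while $n$ ranges over integers of size $\asymp N/\gamma$ and the modulus $r\sim Q/(d\gamma\kappa)$ lies in an interval of length $\ll Q/D$.

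For the first bound $Q^{2+\varepsilon}/D$, the plan is to reuse the pointwise Deligne--Smith estimate $|\mathcal{KS}(\overline{\nu_2}f,\overline{\nu_3}g,\overline{\nu_1}ne;r)|\ll r^{1+\varepsilon}(f,r)$ already invoked in~\eqref{eq:unbaltriv}, and to sum the remaining variables trivially. This reproduces the bound $\mathcal{S}(d\gamma\kappa>D,f\neq 0,g\neq 0)\ll \sqrt{N}\,Q^{3+\varepsilon}/(\sqrt{EFG}\,D^{2})$. Combining $N\leq Q^{3+\varepsilon}/(EFG)$ with $EFG\geq Q^{2-\delta_0}$ converts this into $Q^{5/2+\delta_0+\varepsilon}/D^{2}$, which is $\ll Q^{2+\varepsilon}/D$ whenever $D\geq Q^{1/2+\delta_0}$.

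The second bound $Q^{5/3+2\delta_0/3+\varepsilon}/D^{1/2}$ is needed in the complementary range, and requires cancellation not captured by the pointwise Deligne bound. The plan is to apply Cauchy--Schwarz so as to produce a bilinear form of hyper-Kloosterman sums, and then invoke a second-moment estimate. A natural choice is to Cauchy--Schwarz in the unevaluated variable $n$ of length $N/\gamma$, which bounds $\bigl|\sum_{n}\tau_3(\gamma n)V(\gamma n/N)\mathcal{KS}\bigr|^{2}$ by $(N/\gamma)^{1+\varepsilon}\sum_{n}|\mathcal{KS}|^{2}$. The second factor is then expanded and orthogonality in $n\bmod r$ is used: the diagonal contribution is controlled via Deligne, while the off-diagonal yields another sum of hyper-Kloosterman sums that exhibits an extra $r^{1/2}$ saving. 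Feeding this saving back through the dyadic sums over $d,\gamma,\kappa,r$ with $r\sim Q/(d\gamma\kappa)\ll Q/D$, together with $EFG\geq Q^{2-\delta_0}$ and $NEFG\leq Q^{3+\varepsilon}$, should produce the claimed second term.

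The main obstacle is twofold. First, one must verify that when the length $N/\gamma$ of the $n$-sum is shorter than the modulus $r$, the second-moment bound $\sum_{n}|\mathcal{KS}|^{2}$ still enjoys the clean estimate of essentially $N/\gamma\cdot r^{1+\varepsilon}$; this will likely require the completion technique to replace the short $n$-sum by a complete one, and a careful treatment of the resulting dual sum. Second, the bookkeeping for the many auxiliary variables $\nu_1,\nu_2,\nu_3,d,\gamma,\kappa$ and their coprimality conditions is delicate, and one must check that divisor-type losses arising from factors like $(f,r)$ cost only $Q^{\varepsilon}$ overall and do not erode the $r^{1/2}$ gain.
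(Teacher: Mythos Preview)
Your plan has a genuine gap in the second step. After Cauchy--Schwarz in $n$ you need $\sum_{n\ll N/\gamma}|\mathcal{KS}(\overline{\nu_2}f,\overline{\nu_3}g,\overline{\nu_1}ne;r)|^{2}\ll (N/\gamma)\,r^{1+\varepsilon}$, but this is false: the hyper-Kloosterman sum here has \emph{two} summation variables and generic size $r$ (not $r^{1/2}$), so the second moment is $\asymp (N/\gamma)\,r^{2}$. Completion does not help: the $h=0$ term in the completed sum already contributes $(N/\gamma)\,r^{2}$, since $\sum_{n\bmod r}|\mathcal{KS}(\cdot,\cdot,cn;r)|^{2}\sim r^{3}$ for $(c,r)=1$. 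After the square root and the Cauchy--Schwarz loss you recover exactly the trivial Deligne bound, with no $r^{1/2}$ saving. Consequently your second method gives nothing beyond the first, and the first (as you note) only yields $Q^{2+\varepsilon}/D$ when $D\gg Q^{1/2+\delta_0}$.

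The paper proceeds quite differently and obtains both terms from a single argument. One opens the hyper-Kloosterman sum as $\sum_{a,b}^{*}\ex\bigl((\overline{ab\nu_2}f+b\overline{\nu_3}g+a\overline{\nu_1}ne)/r\bigr)$ and observes that, after summing over $e,f,g,n$, the inner $b$-sum and the variables $(f,g)$ group into a factor $S_2(a)$ while $(e,n)$ group into $S_1(a)$, so that $\sum_{e,f,g,n}\mathcal{KS}=\sum_a^{*}S_1(a)S_2(a)$. Cauchy--Schwarz is applied over the pair $(r,a)$, not over $n$, giving $\bigl(\sum_{r,a}|S_1|^2\bigr)^{1/2}\bigl(\sum_{r,a}|S_2|^2\bigr)^{1/2}$. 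Each factor is now a mean square of additive characters over Farey fractions $a/r$ with $r\sim Q/(d\gamma\kappa)$, and the classical additive large sieve applies (after some Mellin separation and a nontrivial unfolding of $\mathcal S_2$). The two terms $Q^{2+\varepsilon}/D$ and $Q^{5/3+2\delta_0/3+\varepsilon}/D^{1/2}$ arise simultaneously from the two summands $\bigl(Q/(d\gamma\kappa)\bigr)^{2}+\text{length}$ in the large sieve bound for $\mathcal S_1$.
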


\subsection{Setup to prove Lemma \ref{lem:efgnotzerod>D}}

By \eqref{eqn:d>Dfnotzerognotzero}
\begin{align}\label{eqn:d>Dfnotzerognotzero2}
\mathcal{S}(d\gamma\kappa > D, f\neq 0, g\neq 0) &\le \frac{\sqrt{EFG}}{\sqrt{N}} Q^{\varepsilon}\sum_{D<d \gamma \kappa \leq 2Q} \left|\mathcal{S}(d,\gamma, \kappa, f\neq 0, g\neq 0)\right|
\end{align}
where
\begin{align*}
&\mathcal{S}(d,\gamma, \kappa, f\neq 0, g\neq 0) := \bfrac{d \gamma \kappa}{Q}^2 \sum_{r\sim Q/(d \gamma \kappa)} \; \sum_{\substack{\nu_1 \mid d \\ (\nu_1, r) = 1}} \frac{1}{\nu_1} \sumtwo_{\substack{\nu_2, \nu_3 \mid d\gamma\kappa \\ (\nu_2 \nu_3, r) = 1}} \frac{1}{\nu_2}\frac{1}{\nu_3} \\
&\cdot \left|\sumfour_{\substack{e, f, g, n \\ efg \neq 0 \\ (e, r) = (n, d) = 1}}  \mathcal {KS}(\overline{\nu_2} f, \overline{\nu_3}g, \overline{\nu_1}n e; r) \tau_3(\gamma n)   \widehat V\bfrac{Ee}{\nu_1 \gamma r} \widehat{V}\bfrac{fF}{\nu_2 r} \widehat V\bfrac{Gg}{\nu_3 r}  V\bfrac{\gamma n}{N}\right|.
\end{align*}
By a change of variables, we have that
$$\mathcal {KS}(\overline{\nu_2}f, \overline{\nu_3}g, \overline{\nu_1} ne; r) 
= \sumtwostar_{a, b \Mod r} \ex\bfrac{\overline{ab \nu_2} f + b\overline{\nu_3} g + a\overline{\nu_1} ne}{r}
$$
so writing
\begin{align*}
S_1 := \sum_{\substack{e \neq 0 \\ (e, r) = 1}} \sum_{\substack{n \\ (n, d) = 1}} \ex\bfrac{a\overline{ \nu_1} ne}{r} \tau_3(\gamma n)   \widehat{V}\bfrac{eE}{\nu_1 \gamma r} V\bfrac{\gamma n}{N}
\end{align*}
and
\begin{align*}
S_2 := \sum_{f \neq 0} \sum_{g \neq 0} \; \sumstar_{b\Mod r} \ex\bfrac{b\overline{\nu_3}g + \overline{ab\nu_2} f}{r} \widehat V\bfrac{Ff}{\nu_2 r}  \widehat V\bfrac{Gg}{\nu_3 r}.
\end{align*}
We have
\begin{equation}
\label{eq:Sfgneq0bound}
\mathcal{S}(d,\gamma,\kappa, f\neq 0, g\neq 0) \ll \bfrac{d\gamma\kappa}{Q}^2 \sum_{\nu_1 \mid d} \frac{1}{\nu_1} \sumtwo_{\substack{\nu_2, \nu_3| d\gamma\kappa}} \frac{1}{\nu_2}\frac{1}{\nu_3} \sum_{\substack{r\sim Q/(d \gamma \kappa) \\ (r, \nu_1 \nu_2 \nu_3) = 1}}\; \; \sumstar_{a \Mod r} |S_1 S_2|.
\end{equation}
By the rapid decay of $\widehat{V}$, we see that $S_1 \ll Q^{-A}$ for any $A$ if $E \gg  Q^{1+\varepsilon} \frac{\nu_1}{d\kappa}$. Using also that $EFG \gg Q^{2-\delta_0}$ and $E\ge F\ge G$, we may assume that 
\begin{align*}
Q^{2/3 - \delta_0/3} \ll E \ll Q^{1+\varepsilon} \frac{\nu_1}{d \kappa} \ll Q^{1+\varepsilon},
\end{align*} 
since $\nu_1\le d$.  
We also see that
\begin{align*}
F \gg \sqrt{\frac{Q^{2-\delta_0}}{E}} \gg Q^{1/2 - \delta_0/2 - \varepsilon}.
\end{align*}
Furthermore, again apart from errors of size $\ll Q^{-A}$, we can assume that in $S_1$ and $S_2$, we have the restrictions $|e| \leq E', |f| \leq F'$, and $|g| \leq G'$ , where
\begin{equation}
\label{eqn:E'G'}
E' :=\frac{\nu_1 Q}{d\kappa E}Q^\varepsilon, \quad F' := \frac{\nu_2 Q}{d\gamma\kappa F}Q^\varepsilon, \quad G' := \frac{\nu_3 Q}{d\gamma\kappa G}Q^\varepsilon.
\end{equation} 

To complete the proof of Lemma~\ref{lem:efgnotzerod>D} we will apply the Cauchy-Schwarz inequality and the large sieve. In order to rigorously do this, we need to remove some extra dependencies on $r$ in $S_1$ and $S_2$. 
We now write $\widetilde{\widehat{V}}(s)$ as the Mellin transform of $\widehat{V}$.  To be precise, for $\tRe s >0$,
\begin{align*}
\widetilde{\widehat{V}}(s) = \int_0^\infty \widehat{V}(x) x^{s-1} \> dx.
\end{align*}
Due to the decay properties of $\widehat{V}$ and its derivatives in \eqref{eqn:Vnew}, we have by repeated integration by parts that
\begin{align*}
\widetilde{\widehat{V}}(s) \ll_k \frac{Q^\varepsilon}{|s|^k}.
\end{align*}
By Mellin inversion, we have, for $x>0$,
\begin{align*}
\widehat{V}(x) = \frac{1}{2\pi i} \int_{(c)} \widetilde{\widehat{V}}(s) x^{-s} ds,
\end{align*}for any $c>0$.  We will set 
\begin{equation}\label{eqn:c}
c = \frac{1}{\log Q}.
\end{equation}

We further remove the condition $(e, r) = 1$ using M\"obius inversion (introducing $\mu(\omega)$). Thus we have that
\begin{equation}\label{eqn:S_1}
S_1 = \frac{1}{2\pi i} \int_{(c)} \bfrac{\nu_1 \gamma r}{E}^s \widetilde{\widehat{V}}(s) \sum_{\omega | r} \mu(\omega)  S_1(s; \omega)  ds + O(Q^{-A}),
\end{equation}
for any $A > 0$, where
\begin{align*}
  S_1(s, \omega) =  \sum_{\substack{0 < |e| \leq \frac{E'}{\omega}}} \sum_{\substack{n\\ (n, d) = 1}} \ex\bfrac{a\overline{\nu_1} \omega en}{r} \tau_3(\gamma n)(e\omega)^{-s}V\bfrac{\gamma n}{N}.
\end{align*}

Similarly,
\begin{align}\label{eqn:S_2}
S_2 = \frac{1}{(2\pi i)^2} \int_{(c)} \int_{(c)}   S_2(s_1, s_2)  \bfrac{\nu_2 r}{F}^{s_1}\bfrac{\nu_3 r}{G}^{s_2} \widetilde{\widehat{V}}(s_1)\widetilde{\widehat{V}}(s_2) ds_1 ds_2 + O(Q^{-A}),
\end{align}
for any $A>0$, where
\begin{align*}
S_2(s_1, s_2) = \sumtwo_{\substack{0 < |f| \leq F' \\ 0 < |g| \leq G'}} \; \; \sumstar_{b\Mod r} \ex\bfrac{b\overline{\nu_3}g + \overline{ab\nu_2} f}{r}  f^{-s_1} g^{-s_2}.
\end{align*}
Most of the rest of this section is devoted to proving the following proposition.
\begin{prop}\label{prop:Vdss1s2bdd}
Let $s, s_1, s_2$ be complex numbers with real parts equalling $c = 1/\log Q$, and let
\begin{align*}
\Vcal(d, s, s_1, s_2) := \bfrac{d\gamma \kappa }{Q}^2 \sum_{\nu_1| d} \frac{1}{\nu_1} \sumtwo_{\nu_2, \nu_3|d \gamma \kappa} \frac{1}{\nu_2}\frac{1}{\nu_3} \sum_{\substack{r\sim Q/(d \gamma \kappa) \\ (r, \nu_1 \nu_2 \nu_3) = 1}} \;  \sumstar_{a \Mod r} \sum_{\omega | r } |S_1(s; \omega) S_2(s_1, s_2)|.
\end{align*}
We have that
\begin{align*}
\Vcal(d, s, s_1, s_2) \ll Q^{\varepsilon} \bfrac{Q}{d\gamma \kappa}^2 \sqrt{\frac{N}{EFG}} + Q^{\varepsilon} \bfrac{Q}{d\gamma \kappa}^{3/2} \frac{N}{E \sqrt{FG}}.
\end{align*}
\end{prop}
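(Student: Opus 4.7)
The plan is to apply Cauchy--Schwarz in the joint $(r,a)$-sum to decouple $S_1(s;\omega)$ from $S_2(s_1,s_2)$, and then bound each factor separately --- $S_1$ by the multiplicative large sieve and $S_2$ by exploiting the bilinear structure in the internal Kloosterman variable $b$. With $R := Q/(d\gamma\kappa)$, for each fixed $\omega$ we would first write
\[
\sum_{r \sim R,\,\omega \mid r}\sumstar_{a \Mod r}|S_1(s;\omega)S_2(s_1,s_2)| \le \Big(\sum_{r,\omega\mid r,\, a}|S_1|^2\Big)^{1/2}\Big(\sum_{r,\omega\mid r,\, a}|S_2|^2\Big)^{1/2}
\]
and note that the residual sum $\sum_{\omega\mid r}$ costs only $Q^\varepsilon$ by a divisor bound.

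For the first factor, since $\omega\mid r$, the phase $\ex(a\overline{\nu_1}\omega en/r)$ is actually additive modulo $r':=r/\omega$. Collecting $k=en$ into Dirichlet-polynomial coefficients
\[
c_k := \sum_{en=k}\tau_3(\gamma n)(e\omega)^{-s}V(\gamma n/N)\mathbf{1}_{(n,d)=1},
\]
supported on $|k|\le K:=E'N/(\omega\gamma)$ and satisfying $\|c\|_2^2 \ll KQ^\varepsilon$ via the divisor bound, the multiplicative large sieve of Montgomery--Vaughan at modulus $r'$ yields $\sum_{r'\sim R/\omega}\sumstar_{\tilde a \Mod{r'}}|S_1|^2 \ll ((R/\omega)^2+K)KQ^\varepsilon$. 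Since the phase is invariant under $a \mapsto \tilde a:=a\Mod{r'}$, the passage $\sumstar_{a\Mod r}\leadsto\sumstar_{\tilde a\Mod{r'}}$ costs at most a factor of $\omega$, so
\[
\sum_{r\sim R,\,\omega\mid r}\sumstar_{a \Mod r}|S_1|^2 \ll \Big(\frac{R^2}{\omega}+\omega K\Big)KQ^\varepsilon.
\]

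For $S_2$, write $S_2 = \sumstar_b A(b)B(\overline{ab\nu_2})$ with $A(b):=\sum_{0<|g|\le G'}\ex(b\overline{\nu_3}g/r)g^{-s_2}$ and $B(c):=\sum_{0<|f|\le F'}\ex(cf/r)f^{-s_1}$. The key observation is that $b\mapsto c:=\overline{ab\nu_2}$ is a bijection of $(\mathbb{Z}/r\mathbb{Z})^*$ independent of $a$. Applying Cauchy--Schwarz in $b$ and using this bijection to strip the $a$-dependence from the resulting sum over $c$, one obtains the pointwise estimate $|S_2|^2 \le \bigl(\sumstar_b|A(b)|^2\bigr)\bigl(\sumstar_c|B(c)|^2\bigr)$; expanding each square produces Ramanujan sums $\fr_r$ which are controlled by $|\fr_r(m)|\le(m,r)$. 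This yields $\sumstar_b|A|^2 \ll (rG'+G'^2)Q^\varepsilon$ and $\sumstar_c|B|^2 \ll (rF'+F'^2)Q^\varepsilon$, and the required mean-square bound for $S_2$ follows after summing trivially over $a$ and over $r$ with $\omega\mid r$.

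Inserting these two estimates into the Cauchy--Schwarz splitting and separately treating the regimes $K\le(R/\omega)^2$ and $K>(R/\omega)^2$ (in which the $R^2/\omega$ and $\omega K$ contributions to the large sieve respectively dominate) produces the two terms $(Q/(d\gamma\kappa))^2\sqrt{N/EFG}$ and $(Q/(d\gamma\kappa))^{3/2}N/(E\sqrt{FG})$ of the proposition. The hardest part will be checking that the extra factors of $\omega$ and of the $\nu_i$ from the Möbius inversions do not deteriorate the bound beyond a harmless $Q^\varepsilon$, and especially verifying that the Cauchy--Schwarz step on $S_2$ is sharp enough to recover the critical factor $1/\sqrt{G}$ in the main term: a naive pointwise Weil bound for the inner Kloosterman sum would fall short here, so it is essential to combine the bilinear structure in $b$ with orthogonality of additive characters on $(\mathbb{Z}/r\mathbb{Z})^*$ to harvest the extra averaging saving.
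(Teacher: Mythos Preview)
Your treatment of $S_1$ matches the paper's (Lemma~\ref{lem:T1}): reduce to modulus $r'=r/\omega$ and apply the additive large sieve, paying the overcounting factor~$\omega$. The problem is in your treatment of $S_2$. The Cauchy--Schwarz in $b$,
\[
|S_2|^2 \le \Bigl(\sumstar_b |A(b)|^2\Bigr)\Bigl(\sumstar_c |B(c)|^2\Bigr),
\]
is valid, and the right side is indeed $\ll (rG'+G'^2)(rF'+F'^2)Q^\varepsilon$; but it is \emph{independent of $a$}. Summing trivially over $a$ and $r$ then gives, with $R:=Q/(d\gamma\kappa)$,
\[
\mathcal{S}_2 \ll R^2(RG'+G'^2)(RF'+F'^2)Q^\varepsilon \asymp \frac{\nu_2\nu_3}{FG}\,R^{6}\,Q^\varepsilon,
\]
whereas the proposition needs $\mathcal{S}_2 \ll (\nu_2\nu_3/FG)\,R^{5}$ (Lemma~\ref{lem:T2}). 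You lose exactly one power of $R$ in $\mathcal{S}_2$, hence $R^{1/2}$ in the bound for $\Vcal$: the main term becomes $R^{5/2}\sqrt{N/(EFG)}$ instead of $R^{2}\sqrt{N/(EFG)}$. In the application (Lemma~\ref{lem:efgnotzerod>D}) this yields $Q^{5/2}/D^{3/2}$ after summing over $d\gamma\kappa>D$, forcing $D\gg Q^{1/3}$, while the final choice in the paper is $D=Q^{11/1196}$.

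The paper recovers that missing factor of $R$ precisely by \emph{not} discarding the $a$-dependence. It extends $\sumstar_a$ to the complete sum $\sum_{a\Mod r}$ by positivity, opens the square, and uses orthogonality in $a$ to obtain the congruence $b_2 f_1\equiv b_1 f_2\Mod r$. A CRT analysis according to $\nu=(f_1,r)=(f_2,r)$ reduces this to a Ramanujan-sum structure times a free variable $t\Mod{r/\nu}$, setting up a \emph{second} large sieve with polynomial length $\asymp F'G'/(\nu\Delta)$; this yields $((R/\nu)^2+F'G'/(\nu\Delta))$ in place of your trivial $\phi(r)$ from the $a$-sum. That is exactly the step your sketch is missing: the ``bilinear structure in $b$'' and the Ramanujan-sum bounds on $\sum_b|A|^2$, $\sum_c|B|^2$ are sharp as far as they go, but without feeding the $a$-average through orthogonality you cannot harvest the extra factor of $R$ needed for the stated bound.
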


Before proceeding, let us prove Lemma \ref{lem:efgnotzerod>D} assuming Proposition \ref{prop:Vdss1s2bdd}.  Note that by the decay properties of $\widetilde{\widehat{V}}(s)$, we may truncate the integrals appearing in \eqref{eqn:S_1} and \eqref{eqn:S_2} to $|\tIm s|,|\tIm s_1|,|\tIm s_2|\leq Q^\varepsilon$ with error $O(Q^{-A})$ for any $A>0$.  Moreover, recalling $c=\frac{1}{\log Q}$, 
 $$\bfrac{\nu_1 \gamma r}{E}^{s},\bfrac{\nu_2 r}{F}^{s_1}, \bfrac{\nu_3 r}{G}^{s_2} \ll 1.$$  Thus by \eqref{eq:Sfgneq0bound}, \eqref{eqn:S_1}, and~\eqref{eqn:S_2}, and Proposition~\ref{prop:Vdss1s2bdd}
\begin{align*}
\mathcal{S}(d, \gamma, \kappa, f\neq 0, g\neq 0) \ll  Q^{2\varepsilon} \bfrac{Q}{d\gamma \kappa}^2 \sqrt{\frac{N}{EFG}} + Q^{2\varepsilon} \bfrac{Q}{d\gamma \kappa}^{3/2} \frac{N}{E \sqrt{FG}},
\end{align*}
and putting this into \eqref{eqn:d>Dfnotzerognotzero2}, we obtain
\begin{align*}
\mathcal{S}(d\gamma \kappa > D, f\neq 0, g\neq 0) 
&\ll  Q^{3\varepsilon} \frac{\sqrt{EFG}}{\sqrt{N}} \sum_{d\gamma \kappa > D} \left[  \bfrac{Q}{d\gamma \kappa}^2 \sqrt{\frac{N}{EFG}} +  \bfrac{Q}{d\gamma \kappa}^{3/2} \frac{N}{E \sqrt{FG}} \right]\\
&\ll \frac{Q^{2+3\varepsilon}}{D} + \frac{Q^{3/2+3\varepsilon}}{D^{1/2}} \sqrt{\frac{N}{E}}.
\end{align*}
Using the bounds $N\ll Q^{1+\delta_0+\varepsilon}$ and $E \gg Q^{2/3 - \delta_0/3}$, we get
\begin{align*}
\mathcal{S}(d\gamma\kappa > D, f\neq 0, g\neq 0) &\ll \frac{Q^{2+3\varepsilon}}{D} + \frac{Q^{3/2+3\varepsilon}}{D^{1/2}} Q^{1/6+2\delta_0/3 +\varepsilon/2},
\end{align*} 
which proves Lemma \ref{lem:efgnotzerod>D} after adjusting $\varepsilon$.

\subsection{Proof of Proposition \ref{prop:Vdss1s2bdd}}

By the Cauchy-Schwarz inequality,
\begin{align}\label{eqn:Vcalbdd1}
\Vcal(d, s, s_1, s_2) \ll  Q^\varepsilon \bfrac{d\gamma\kappa}{Q}^2\sum_{\nu_1 \mid d} \frac{1}{\nu_1} \sumtwo_{\substack{\nu_2, \nu_3| d\gamma\kappa}} \frac{1}{\nu_2}\frac{1}{\nu_3} \;  \sqrt{\mathcal{S}_1 \mathcal{S}_2},
\end{align}
for
\begin{align*}
\mathcal{S}_1 &:= \sum_{\substack{r\sim Q/(d\gamma\kappa) \\ (r, \nu_1) = 1}} \;  \sumstar_{a \Mod r} \sum_{\omega|r} |S_1(s; \omega)|^2 
\end{align*}and
\begin{align*}
\mathcal{S}_2 := \sum_{\substack{r\sim Q/(d\gamma\kappa) \\ (r, \nu_2 \nu_3)= 1}} \;  \sumstar_{a \Mod r} |S_2(s_1, s_2)|^2.
\end{align*}  
We now proceed to bound $\mathcal{S}_1$ and $\mathcal{S}_2$.

\begin{lem}\label{lem:T1} With the above notation,
\begin{equation*} 
\mathcal{S}_1 \ll Q^\varepsilon \frac{\nu_1 N}{E} \bfrac{Q}{d\gamma\kappa}^2 \left(\frac{Q}{d\gamma\kappa} + \frac{\nu_1 N}{ E}\right).
\end{equation*}
\end{lem}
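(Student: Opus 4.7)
The plan is to bound $\mathcal{S}_1$ via the multiplicative large sieve over Farey fractions, after combining the $e$- and $n$-variables into a single variable $m = en$. First I would set
\[
c_m(\omega) := \sum_{\substack{en = m \\ 0 < |e| \le E'/\omega \\ (n,d) = 1}} (e\omega)^{-s}\, \tau_3(\gamma n)\, V\!\left(\frac{\gamma n}{N}\right),
\]
so that $S_1(s;\omega) = \sum_m c_m(\omega)\, \ex(a\overline{\nu_1}\omega m/r)$, where $m$ is effectively supported on $|m| \le M'$ with $M' \ll Q^{\varepsilon} \nu_1 R N / (E\omega)$ and $R := Q/(d\gamma\kappa)$. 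Note $m \neq 0$ automatically since $e \neq 0$ and $V$ is supported away from $0$.

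The key step is to reduce the sum over $a \bmod r$ to a sum over the Farey fractions $a'/r'$ of level $R/\omega$. For each $\omega \mid r$ I would write $r = \omega r'$ and note that the exponential $\ex(a\overline{\nu_1}\omega m/r) = \ex(a\overline{\nu_1} m/r')$ depends on $a$ only modulo $r'$; since each unit $a' \in (\mathbb{Z}/r'\mathbb{Z})^*$ has at most $\omega$ unit-lifts to $(\mathbb{Z}/\omega r'\mathbb{Z})^*$, this yields
\[
\sumstar_{a\,\Mod r}\, |S_1(s;\omega)|^2 \;\le\; \omega \sumstar_{a'\,\Mod{r'}} \left|\sum_m c_m(\omega)\, \ex\!\left(\frac{a'\overline{\nu_1} m}{r'}\right)\right|^2.
\]
After absorbing $\overline{\nu_1}$ via the bijection $a' \mapsto a'\nu_1$ on $(\mathbb{Z}/r'\mathbb{Z})^*$, the standard multiplicative large sieve inequality gives, upon summing over $r' \sim R/\omega$,
\[
\sum_{r' \sim R/\omega}\; \sumstar_{a'\,\Mod{r'}} \left|\sum_m c_m(\omega)\, \ex\!\left(\frac{a' m}{r'}\right)\right|^2 \ll \left(\frac{R^2}{\omega^2} + M'\right) \sum_m |c_m(\omega)|^2.
\]

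To complete the bound, I would estimate $\sum_m |c_m(\omega)|^2$ via Cauchy--Schwarz in the definition of $c_m(\omega)$ together with the trivial bound $|e\omega|^{-2\R s} = O(Q^\varepsilon)$ on the line $\R s = 1/\log Q$ and the standard divisor estimate $\sum_{n \asymp N/\gamma} \tau_3(\gamma n)^2 \ll Q^\varepsilon N/\gamma$. This yields $\sum_m |c_m(\omega)|^2 \ll Q^{\varepsilon} \nu_1 R N / (E \omega)$. Summing over divisors $\omega \le 2R$ of $r \sim R$ and using $\sum_\omega \omega^{-2} \ll 1$ and $\sum_{\omega \le 2R} \omega^{-1} \ll \log R$ produces
\[
\mathcal{S}_1 \ll Q^{\varepsilon}\, \frac{\nu_1 RN}{E}\left(R^2 + \frac{\nu_1 RN}{E}\right) = Q^{\varepsilon}\, \frac{\nu_1 N}{E}\bfrac{Q}{d\gamma\kappa}^2 \left(\frac{Q}{d\gamma\kappa} + \frac{\nu_1 N}{E}\right),
\]
which is exactly the claim.

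The only substantive technicality lies in the reduction from modulus $r$ to modulus $r' = r/\omega$: when $\gcd(\omega, r') > 1$ one cannot invoke the Chinese remainder theorem cleanly, so the crude count ``at most $\omega$ unit-lifts'' has to do the job. The resulting factor of $\omega$ is benign because the two $\omega$-dependent factors on the right each bring a compensating $1/\omega$, keeping the sum over $\omega$ convergent. Everything else is bookkeeping plus one standard application of the large sieve.
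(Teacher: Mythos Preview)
Your proof is correct and follows essentially the same approach as the paper: write $r=\omega r'$, pass from the sum over $a\Mod r$ to one over $a'\Mod{r'}$ at the cost of a factor $\omega$, apply the additive large sieve (Theorem 7.11 in Iwaniec--Kowalski) to the trigonometric polynomial in $m=en$ of length $\ll Q^\varepsilon \nu_1 RN/(E\omega)$, bound the $\ell^2$-norm of the coefficients via the divisor bound, and sum over $\omega$. The paper's write-up is terser (it simply records ``coefficients $a_n\ll Q^\varepsilon$'' where you invoke Cauchy--Schwarz, and it absorbs the $\omega$-sum into the final displayed line), but the argument is the same.
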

\begin{proof}
Writing $r' = r/\omega$ and $b = \overline{\nu_1} a$, we have,
\begin{align*}
\mathcal{S}_1 &\ll Q^\varepsilon  \sum_{\omega \leq 2Q/(d\gamma\kappa)} \omega \sum_{\substack{r'\sim \frac{Q}{d\gamma \kappa\omega}}} \;  \sumstar_{b \Mod{r'}}  \left|\sum_{0 < |e| \leq E'/\omega} \sum_{\substack{n \\ (n, d) = 1}} \ex\bfrac{b en}{r'} \tau_3(\gamma n)(e\omega)^{-s}V\bfrac{\gamma n}{N}\right|^2.
\end{align*}
Applying the classical additive large sieve (see e.g.~\cite[Theorem 7.11]{IK} with a trigonometric polynomial of length $\ll E'N/(\gamma \omega)$ and coefficients $a_n \ll Q^\varepsilon$), we obtain
\begin{align*}
\mathcal{S}_1 &\ll  Q^{5\varepsilon} \sum_{\omega \leq 2Q/(d\gamma \kappa)} \omega \left(\bfrac{Q}{d\gamma \kappa \omega}^2 + \frac{\nu_1 Q N}{dE \gamma \kappa \omega}\right) \frac{\nu_1 Q N}{d\gamma\kappa \omega E} \ll Q^{5\varepsilon}  \left(\bfrac{Q}{d\gamma \kappa}^2 + \frac{\nu_1 Q N}{dE \gamma \kappa}\right) \frac{\nu_1 Q N}{d\gamma\kappa E}.
\end{align*}
Adjusting $\varepsilon$, this completes the proof. 
\end{proof}

\begin{lem}\label{lem:T2}
\begin{align*}
\mathcal{S}_2 \ll  Q^{\varepsilon}  \frac{\nu_2 \nu_3}{FG} \bfrac{Q}{d\gamma\kappa}^5 \left(1 + \frac{\nu_2 \nu_3}{FG}\right).
\end{align*}
\end{lem}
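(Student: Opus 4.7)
My plan is to mirror the proof of Lemma~\ref{lem:T1}, but using the \emph{multiplicative} rather than the additive large sieve, reflecting the Kloosterman-like structure that the $b$-sum adds to $S_2$. First, I would dualize the $a$-average. Writing $S_2 = \sumstar_b A_b B_b$ with $A_b := \sum_f f^{-s_1}\ex(\overline{ab\nu_2}f/r)$ and $B_b := \sum_g g^{-s_2}\ex(b\overline{\nu_3}g/r)$, opening $|S_2|^2$ into a double sum over $(b_1, b_2)$, performing the $a$-sum via the Ramanujan-sum identity $\sumstar_a \ex(\overline{a}x/r) = \fr_r(x)$, then re-expanding $\fr_r(x) = \sumstar_c \ex(cx/r)$ and recognizing the result as a square in the index $b$, one arrives at the key identity
\[
\sumstar_a |S_2|^2 \;=\; \sumstar_c |W(c)|^2, \qquad W(c) := \sumstar_b B_b\, A(c\overline{b}), \quad A(u):=\sum_f f^{-s_1}\ex(uf/r).
\]
This converts the Kloosterman-type sum into a product of two additive character sums dualized by a new variable $c$ ranging over $(\mathbb{Z}/r)^*$.

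Next I apply Plancherel on $(\mathbb{Z}/r)^*$: $\sumstar_c |W(c)|^2 = \phi(r)^{-1}\sum_{\chi \Mod r}|\hat W(\chi)|^2$ with $\hat W(\chi) = \sumstar_c \overline{\chi(c)}W(c)$. The substitution $c' = c\overline{b}$ shows that the Fourier coefficient factors: $\hat W(\chi) = \tilde A(\chi)\tilde B(\chi)$ where $\tilde A(\chi) = \sumstar_c \overline{\chi(c)}A(c)$ and $\tilde B(\chi) = \sumstar_b \overline{\chi(b)}B_b$. For primitive $\chi \Mod r$, the Gauss-sum identity $\sumstar_c \overline{\chi(c)}\ex(cy/r) = \chi(y)\tau(\overline\chi)$ (valid for $(y,r)=1$, with the sum vanishing otherwise) together with $|\tau(\overline\chi)|^2 = r$ yields
\[
|\hat W(\chi)|^2 \;=\; r^2\, |A^*_\chi B^*_\chi|^2, \qquad A^*_\chi := \sum_{(f,r)=1} f^{-s_1}\chi(f), \ \ B^*_\chi := \sum_{(g,r)=1} g^{-s_2}\chi(g).
\]

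The crucial observation is that $A^*_\chi B^*_\chi$ is itself a single Dirichlet polynomial: $A^*_\chi B^*_\chi = \sum_m c_m \chi(m)$, where $c_m = \sum_{fg=m, |f|\le F', |g|\le G'} f^{-s_1}g^{-s_2}$, $|m| \le F'G'$, and $|c_m|\ll \tau(|m|)Q^\varepsilon$. I would then apply the multiplicative large sieve (cf.~\cite[Theorem 7.13]{IK}) summed simultaneously over $r\sim R$ and primitive $\chi \Mod r$:
\[
\sum_{r \sim R}\sum_{\chi \Mod r}^{*} |A^*_\chi B^*_\chi|^2 \;\ll\; (R^2 + F'G')\sum_m |c_m|^2 \;\ll\; Q^\varepsilon (R^2 + F'G')F'G'.
\]
Using $r^2/\phi(r) \ll r Q^\varepsilon$ for $r\sim R$ to combine with the factor from Plancherel, the primitive contribution to $\mathcal S_2$ is $\ll Q^\varepsilon R(R^2 + F'G')F'G' = Q^\varepsilon(R^3 F'G' + R(F'G')^2)$, which matches the claim after substituting $F' \asymp \nu_2 R Q^\varepsilon/F$ and $G' \asymp \nu_3 R Q^\varepsilon/G$. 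The main technical obstacle is the contribution of the imprimitive characters mod $r$: for those induced from a proper divisor $r^* \mid r$, I would factor $r = r^* d$, descend to the primitive case at modulus $r^*$, and control the resulting sum over $d$ via divisor bounds, so that this contribution is absorbed into the same order as the primitive one.
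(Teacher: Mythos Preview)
Your approach is correct and genuinely different from the paper's. Your key identity $\sumstar_a |S_2|^2 = \sumstar_c |W(c)|^2$ (which follows from the substitution $c = \overline{a\nu_2}$) and the factorization $\hat W(\chi) = \tilde A(\chi)\tilde B(\chi)$ under Plancherel are both valid, and the multiplicative large sieve applied to the product polynomial $\sum_m c_m \chi(m)$ does give exactly the target bound $Q^\varepsilon(R^3 F'G' + R(F'G')^2)$ for the primitive characters. The paper instead extends the $a$-sum to all residues by positivity, opens the square, and obtains a congruence $b_2 f_1 \equiv b_1 f_2 \Mod r$; it then performs a rather delicate CRT analysis of the resulting $b_1,b_2$-sum (splitting $r$ according to $(f_i,r)$ and the radical structure), eventually reducing to the \emph{additive} large sieve. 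Your route replaces that CRT combinatorics with character orthogonality, which is conceptually cleaner in the primitive case; what the paper's approach buys is that it avoids any discussion of Gauss sums of imprimitive characters.

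That said, your treatment of the imprimitive characters is underspecified, and this is precisely the step where the two approaches diverge in difficulty. For $\chi \Mod r$ induced by $\chi^* \Mod{r^*}$ with $r = r^* r_0$, one has
\[
\tilde A(\chi) = \tau(\overline{\chi^*}) \sum_{\substack{e \mid r_0 \\ \mu(r_0/e)\neq 0,\ (r_0/e,r^*)=1}} e^{1-s_1}\,\mu(r_0/e)\,\overline{\chi^*(r_0/e)}\, A^*_{\chi^*}(F'/e),
\]
so $\tilde A(\chi)$ is not simply a rescaled Gauss sum times a single Dirichlet polynomial: it is a weighted sum over divisors $e\mid r_0$ with an extra factor of $e$ and a shortened truncation $F'/e$. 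You therefore cannot merely ``descend to modulus $r^*$'' and sum trivially over $r_0$. What does work is: apply Cauchy--Schwarz over $e$ (losing only $\tau(r_0)\ll Q^\varepsilon$), swap sums to put $e_1,e_2,r^*$ outside, and for fixed $e_1,e_2$ apply the multiplicative large sieve at level $r^*\sim R^*$ to the product polynomial of length $F'G'/(e_1 e_2)$. The resulting sum $\sum_{e_1,e_2 \le R/R^*} (e_1,e_2)\,[(R^*)^2 + F'G'/(e_1 e_2)]$ is controlled by $R^2 + F'G'$ after using $e_j \le r_0 \ll R/R^*$, and summing $R^*$ dyadically up to $R$ recovers $R^3 F'G' + R(F'G')^2$. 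So the imprimitive contribution is indeed absorbed, but only after tracking the divisor-weighted Gauss-sum expansion; your sketch should make this explicit.
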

\begin{proof}
We need to do some preparations before being able to apply the large sieve. We first change the variable $a$ to $\overline{a}$, and write
\begin{align*}
\mathcal{S}_2 &= \sum_{\substack{r\sim Q/(d\gamma\kappa) \\ (r, \nu_2 \nu_3) = 1}} \;  \sumstar_{a \Mod r} \left|\sumtwo_{\substack{0 < |f| \leq F' \\ 0 < |g| \leq G'}} \;\; \sumstar_{b\Mod r} \ex\bfrac{b\overline{\nu_3}g + a\overline{b\nu_2} f}{r}  f^{-s_1} g^{-s_2} \right|^2\\
&\le  \sum_{\substack{r\sim Q/(d\gamma\kappa) \\ (r, \nu_2 \nu_3) = 1}} \;  \sum_{a \Mod r}  \left|\sumtwo_{\substack{0 < |f| \leq F' \\ 0 < |g| \leq G'}}\;\; \sumstar_{b\Mod r} \ex\bfrac{b\overline{\nu_3}g + a\overline{b\nu_2} f}{r}  f^{-s_1} g^{-s_2} \right|^2.
\end{align*}

Opening up the square, and using orthogonality in the complete sum over $a \Mod r$, we have 
\begin{align}\label{eqn:T21}
\mathcal{S}_2 
&\le \Biggl|\sum_{\substack{r\sim Q/(d\gamma\kappa) \\ (r, \nu_2 \nu_3) = 1}} r \sumfour_{\substack{f_1,f_2, g_1, g_2 \\ 0 < |f_j| \leq F'\\ 0 < |g_j| \leq G'}}\sumstar_{\substack{b_1, b_2\Mod r\\ b_2 f_1 \equiv b_1 f_2  \Mod r}} \ex\bfrac{b_1\overline{\nu_3}g_1 - b_2\overline{\nu_3}g_2}{r}  f_1^{-s_1} f_2^{-\overline{s_2}} g_1^{-s_2} g_2^{-\overline{s_2}}\Biggr|.
\end{align}
The congruence 
\begin{equation}\label{eqn:becongruence}
b_2 f_1 \equiv b_1 f_2  \Mod r
\end{equation}
implies that $(f_1, r) = (f_2, r) = \nu$, say.  

Write $r = \hat{r} u$, where $(u, r/\nu) = 1$ and $p \mid \hat{r} \implies p \mid r/\nu$. In particular $r/\nu \mid \hat{r}$, and we write
$$\hat{r} = \frac{r}{\nu} \Delta
$$for some $\Delta \in \mathbb{N}$.  Note that
\begin{align*}
&\nu = \Delta u \quad \textup{ and } \quad \textup{rad}(\hat{r}) \mid \frac{r}{\nu},
\end{align*}
where as usual, we write $\textup{rad}(\hat{r})$ to denote the largest square free divisor of $\hat{r}$.

Then, writing $f_1' = f_1/\nu$ and $f_2' = f_2/\nu$, we see that \eqref{eqn:becongruence} is equivalent to 
\begin{align*}
b_2 f_1' \equiv b_1 f_2' \Mod{r/\nu},
\end{align*}
as well as to 
\begin{align}
  \label{eq:b2b1cong}
b_2 \equiv b_1  \overline{f_1'} f_2' + l\frac{r}{\nu} \Mod{ \hat{r}},
\end{align}
holding for some $0\le l < \Delta = \frac{\hat{r}}{r/\nu}$.  Note moreover that each value of $l$ in the range $0\le l < \Delta$ gives a distinct reduced residue $b_2 \Mod{\hat{r}}$ since $\textup{rad}(\hat{r}) = \textup{rad}(r/\nu)$.

By the Chinese Remainder Theorem, we may write $b_i = x_i u \overline{u} + y_i \hat{r} \overline{\hat{r}}$, where $\overline{u}$ is inverse of $u$ $\Mod{\hat{r}}$ and $\overline{\hat{r}}$ is the inverse of $\hat{r}$ $\Mod{u}$.  Then~\eqref{eq:b2b1cong}, and thus also \eqref{eqn:becongruence}, is equivalent to
\begin{align*}
x_2 \equiv x_1 \overline{f_1'} f_2' + l\frac{r}{\nu} \Mod{ \hat{r}},
\end{align*}
for $0\le l < \Delta$.

Thus,
\begin{align}
  \label{eqn:g1divisor}
  \begin{aligned}
&\sumstar_{\substack{b_1, b_2\Mod r\\ b_2 f_1 \equiv b_1 f_2  \Mod r}}\ex\bfrac{b_1\overline{\nu_3}g_1 - b_2\overline{\nu_3}g_2}{r}\\
&= \sumstar_{x\Mod{\hat{r}}} \sum_{l \Mod \Delta} \ex\bfrac{\overline{u\nu_3}(xg_1 - x \overline{f_1'} f_2'  g_2 - l\frac{r}{\nu} g_2)}{\hat{r}} \sumstar_{y_1, y_2 \Mod u} \ex\bfrac{y_1 \overline{\nu_3\hat{r}} g_1 - y_2\overline{\nu_3\hat{r}} g_2}{u}\\
&= \Delta \mathbf{1}_{\Delta|g_2} \sumstar_{x\Mod{\hat{r}}} \ex\bfrac{xf_1'g_1 - xf_2' g_2}{\hat{r}} \sumstar_{y_1, y_2 \Mod u} \ex\bfrac{y_1 g_1 - y_2 g_2}{u}
  \end{aligned}
\end{align}
by an appropriate change of variables and where 
\begin{align*}
\mathbf{1}_{\Delta|g_2} = 
\begin{cases}
1 &\textup{ if } \Delta|g_2\\
0 &\textup{ otherwise.}
\end{cases}
\end{align*}  

The expression above may look unnaturally asymmetric with respect to the $g_i$, and now we rectify that situation. Recalling that $\textup{rad}(\hat{r}) = \textup{rad}(r/\nu)$, we may write reduced residues $x \Mod{\hat{r}}$ as $x = t + l\frac{r}{\nu}$ where $t$ runs through the reduced residues modulo $\frac{r}{\nu}$, and $l$ runs through all integers in the range $0\le l<\Delta$. Hence we have, assuming $\Delta \mid g_2$,
\begin{align}
  \label{eqn:g2divisor}
\begin{aligned}
\sumstar_{x\Mod{\hat{r}}} \ex\bfrac{xf_1'g_1 - xf_2' g_2}{\hat{r}} 
&=\sumstar_{x\Mod{\hat{r}}} \ex\bfrac{-xf_2' g_2/\Delta}{r/\nu} \ex\bfrac{xf_1'g_1}{\hat{r}}\\ 
&=\sumstar_{t\Mod{r/\nu}} \ex\bfrac{-tf_2' g_2/\Delta}{r/\nu} \ex\bfrac{tf_1'g_1}{\hat{r}} \sum_{l \Mod \Delta} \ex\bfrac{lf_1'g_1}{\Delta} \\
&=\Delta \mathbf{1}_{\Delta|g_1} \sumstar_{t\Mod{r/\nu}} \ex\bfrac{tf_1'g_1/\Delta-tf_2' g_2/\Delta}{r/\nu}.
\end{aligned}
\end{align}

Plugging \eqref{eqn:g2divisor} into \eqref{eqn:g1divisor} and then into \eqref{eqn:T21}, and for simplicity writing $f_i$ for $f_i' = f_i/\nu$, $g_i$ for $g_i/\Delta$ we see that
\begin{align}
  \label{eqn:T22}
  \begin{aligned}
\mathcal{S}_2 
&\leq \sum_{r\sim Q/(d\gamma\kappa) } r \sum_{\nu|r}\sum_{\substack{\nu = u\Delta \\ (u, r/\nu) = 1 \\ p \mid \Delta \implies p \mid r/\nu}} \Delta^{2} \sumstar_{t\Mod{r/\nu}}\\
&\Biggl|\sumfour_{\substack{f_1,f_2, g_1, g_2 \\ 0 < |f_j| \leq F'/\nu\\ 0 < |g_j| \leq G'/\Delta\\ (f_j, r/\nu) = 1}}\ex\bfrac{tf_1g_1-tf_2 g_2}{r/\nu} \sumstar_{y_1, y_2 \Mod u} \ex\bfrac{y_1 g_1 - y_2 g_2}{u}  f_1^{-s_1} f_2^{- \overline{s_1}} g_1^{-s_2} g_2^{-\overline{s_2}}\Biggr|.
  \end{aligned}
\end{align}

Unfortunately, we now have the conditions $(f_j, r/\nu) = 1$ for $j = 1, 2$ which impedes a rigorous application of the large sieve.  We pick out these condition by M\"obius inversion (introducing $\mu(\lambda_i)$).
For $\lambda, t, r' \in \mathbb{N}$, let
\begin{align*}
\calS(\lambda, t, r') := \sumtwo_{\substack{f, g\\ 0 < |f| \leq F'/(\lambda \nu)\\ 0 < |g| \leq G'/\Delta}} \ex\bfrac{t  f g}{r'} \sumstar_{y\Mod u} \ex\bfrac{yg}{u} (\lambda f)^{-s_1} g^{-s_2}.
\end{align*}

Then by \eqref{eqn:T22}, we have
\[
\mathcal{S}_2 \le \sum_{r\sim Q/(d\gamma\kappa)} r \sum_{\nu|r} \sum_{\substack{\nu = u\Delta \\ (u, r/\nu) = 1 \\ p \mid \Delta \implies p \mid r/\nu}} \Delta^{2} \sumstar_{t\Mod{r/\nu}}  \sumtwo_{\lambda_1, \lambda_2|\frac{r}{\nu}} \left|S\left(\lambda_1, t, \frac{r}{\nu \lambda_1}\right) \overline{S\left(\lambda_2, t, \frac{r}{\nu \lambda_2}\right)}\right|.
\]
Note that that $S(\lambda, t, r')$ depends on $t$ modulo $r'$. Hence we can simplify the notation by writing $r$ for $\frac{r}{\lambda \nu}$. Applying the inequality $|xy| \leq |x|^2+|y|^2$ and the bound $\sum_{\lambda_i|r} 1 \ll Q^\varepsilon$, we obtain
\begin{equation}
\label{eqn:T23}
\mathcal{S}_2 \ll Q^\varepsilon \frac{Q}{d\gamma \kappa} \sum_{\nu \leq 2Q} \sum_{\substack{\nu = u\Delta \\ (u, r/\nu) = 1 \\ p \mid \Delta \implies p \mid r/\nu}} \Delta^2 \sum_{\lambda \leq 2Q} \lambda \sum_{\substack{r \sim \frac{Q}{d \gamma \kappa \lambda \nu}}}  \; \sumstar_{t \Mod r} |S(\lambda, t, r)|^2.
\end{equation}

We now apply the large sieve (see e.g.~\cite[Theorem 7.11]{IK}) to get 
\begin{align}\label{eqn:T2subsum1}
\sum_{\lambda \leq 2Q} \lambda \sum_{\substack{r \sim \frac{Q}{d \gamma \kappa \lambda \nu}}}  \; \sumstar_{t \Mod r} |S(\lambda, t, r)|^2
&\ll \sum_{\lambda\leq 2Q} \lambda \left(\bfrac{Q}{d \gamma \kappa \lambda \nu}^2 + \frac{F'G'}{\lambda \nu \Delta}\right) \sum_{j} |a_\lambda(j)|^2,
\end{align}
where
\begin{align*}
a_\lambda(j) := \sum_{\substack{j = fg\\ 0 < |f| \leq F'/(\lambda \nu)\\ 0 < |g| \leq G'/\Delta}} \sumstar_{y\Mod u} \ex\bfrac{yg}{u} (\lambda f)^{-s_1} g^{-s_2}.
\end{align*}
Here we have a Ramanujan sum 
$$\sumstar_{y\Mod u} \ex\bfrac{yg}{u}\ll (g, u),
$$
and furthermore
$$|(\lambda f)^{-s_1} g^{-s_2}| \ll 1,
$$
upon recalling \eqref{eqn:c}.  Thus, writing $\delta_j = (g_j, u)$,

\begin{align}\label{eqn:aL2}
\sum_{j} |a_\lambda(j)|^2 \ll \sum_{\delta_1, \delta_2 \mid u} \delta_1 \delta_2 \sumtwo_{\substack{f_1, g_1 \\ 0 < |f_1| \leq F'/(\lambda \nu)\\ 0 < |g_1| \leq G'/\Delta\\ \delta_1 \mid g}} \sum_{\substack{f_2, g_2\\f_2g_2 = f_1g_1 \\ \delta_2 \mid g_2}} 1
\ll  Q^\varepsilon \left(\sum_{\substack{[\delta_1, \delta_2] \mid u}}  \frac{\delta_1 \delta_2}{[\delta_1, \delta_2]}\right) \frac{F'G'}{\lambda \nu\Delta}
\ll  Q^{2\varepsilon} \frac{F'G' u}{\lambda \nu\Delta}.
\end{align}
By \eqref{eqn:T2subsum1} and \eqref{eqn:aL2}, we see that
\begin{align*}
\sum_{\lambda \leq 2Q} \lambda \sum_{\substack{r \sim \frac{Q}{d \gamma \kappa \lambda \nu}}}  \; \sumstar_{t \Mod r} |S(\lambda, t, r)|^2
\ll Q^{3\varepsilon} \frac{F'G' u}{ \nu\Delta} \left(\bfrac{Q}{d \gamma \kappa \nu}^2 + \frac{F'G'}{\nu \Delta}\right).
\end{align*}
Putting this into \eqref{eqn:T23}, we have
\begin{align*}
\mathcal{S}_2 &\ll Q^{4\varepsilon} \frac{Q}{d\gamma \kappa} \sum_{\nu \leq 2Q} \sum_{\substack{\nu = u\Delta \\ (u, r/\nu) = 1 \\ p \mid \Delta \implies p \mid r/\nu}} \Delta^2 \frac{F'G' u}{ \nu\Delta} \left(\bfrac{Q}{d \gamma \kappa \nu}^2 + \frac{F'G'}{\nu \Delta}\right)\\
&\ll  \frac{Q^{1+5\varepsilon}}{d\gamma \kappa} F'G' \left(\bfrac{Q}{d\gamma \kappa}^2 + F'G'\right).
\end{align*}
By~\eqref{eqn:E'G'}, we obtain
\[
\mathcal{S}_2 \ll  \frac{Q^{3+9\varepsilon}}{(d\gamma \kappa)^3} \frac{\nu_2 \nu_3}{FG} \left(\bfrac{Q}{d\gamma \kappa}^2 + \frac{\nu_2 \nu_3 Q^2}{(d\gamma \kappa)^2 FG}\right),
\]
which implies the claim after adjusting $\varepsilon$.
\end{proof}

It is now straightforward to complete the proof of Proposition \ref{prop:Vdss1s2bdd} using Lemmas \ref{lem:T1} and \ref{lem:T2}.  Indeed, by Lemmas \ref{lem:T1} and \ref{lem:T2} and \eqref{eqn:Vcalbdd1}
\begin{align*}
& \Vcal(d, s, s_1, s_2) \\
&\ll  Q^{2\varepsilon} \bfrac{d\gamma \kappa}{Q}^2\sum_{\nu_1 | d} \frac{1}{\nu_1} \sumtwo_{\nu_2, \nu_3|d\gamma \kappa} \frac{1}{\nu_2}\frac{1}{\nu_3} \; \sqrt{\frac{\nu_1 \nu_2 \nu_3 N}{EFG}} \bfrac{Q}{d\gamma \kappa}^{7/2} \left( \bfrac{Q}{d\gamma \kappa}^{1/2} + \sqrt{\frac{\nu_1 N}{ E}}\right)\left(1+ \sqrt{\frac{\nu_2\nu_3}{FG}} \right) \\
&\ll  Q^{3\varepsilon} \bfrac{Q}{d\gamma \kappa}^{3/2} \sqrt{\frac{N}{EFG}} \left( \bfrac{Q}{d\gamma \kappa}^{1/2} + \sqrt{\frac{N}{E}} \right),
\end{align*}
and the claim follows after adjusting $\varepsilon$.

\section{Averages of Kloosterman sums}
\label{sec:AverageKloo}
Write, for sequences $\mathbf{a} = (a_m)_{m \geq 1}$ and $\mathbf{b} = (b_{n, r, s})_{n,r,s \geq 1}$,
\[
\Vert \mathbf{a} \Vert_2  = \sqrt{\sum_m |a_m|^2} \quad \text{and} \quad \Vert \mathbf{b} \Vert_2 = \sqrt{\sum_{n, r, s} |b_{n, r, s}|^2}.
\]
To deal with the averages of Kloosterman sums appearing in the case when $d\lambda\kappa$ is small we shall use the following refinement of~\cite[Theorem 10]{DI}. 

\begin{lemma}
\label{le:Klo1}
Let $C, M, N, R, S \geq 1/2$ and let $g \colon \mathbb{R}^5 \to \mathbb{R}$ be a smooth function with compact support on $[C, 2C] \times (0, \infty)^4$ such that, for any $\varepsilon > 0$
\begin{align*}
\left|\frac{\partial^{\nu_1 + \nu_2 + \nu_3 + \nu_4 + \nu_5}}{\partial c^{\nu_1} \partial m^{\nu_2} \partial n^{\nu_3} \partial r^{\nu_4} \partial s^{\nu_5}} g(c, m, n, r, s)\right| \ll_{\nu_j} (CMNRS)^\varepsilon c^{-\nu_1} m^{-\nu_2} n^{-\nu_3} r^{-\nu_4} s^{-\nu_5},
\end{align*}
for every $\nu_j \geq 0$,  $1 \leq j \leq 5$.
Assume that
\begin{equation}
\label{def:X}
X := \frac{CS\sqrt{R}}{4\pi \sqrt{MN}} \gg 1.
\end{equation}
Let $\mathbf{a} = (a_m)_{m \geq 1}$ and $\mathbf{b} = (b_{n,r,s})_{n,r,s \geq 1}$ denote two sequences. 
Let
\[
\mathcal{L}^{\pm}(C, M, N, R, S) = \sumfour_{\substack{r \sim R, s \sim S, m \sim M, n \sim N \\ (r, s) = 1}}  a_{m} b_{n, r, s} \sum_{\substack{c \\ (c, r) = 1}} g(c, m, n, r, s) S(\pm n, m\overline{r}, sc).
\]
Then, for any $\varepsilon > 0$,
\[
\mathcal{L}^{\pm}(C, M, N, R, S) \ll (CMNRS)^\varepsilon L(C, M, N, R, S) \Vert \mathbf{a} \Vert_2 \Vert \mathbf{b} \Vert_2,
\]
where
\[
L(C, M, N, R, S) = CS\sqrt{R}\cdot \sqrt{RS} \left(1+\sqrt{\frac{M}{RS}}\right) \left(1+\sqrt{\frac{N}{RS}}\right)\left(1+\frac{X^2}{\left(1+\frac{RS}{M}\right)^2 \left(1+\frac{RS}{N}\right)}\right)^{7/64}.
\]
\end{lemma}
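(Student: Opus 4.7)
The plan is to prove the lemma via the Kuznetsov trace formula, followed by a Cauchy--Schwarz step and the spectral large sieve of Deshouillers--Iwaniec, with particular care devoted to the exceptional spectrum. First I would remove the coprimality condition $(c,r) = 1$ by M\"obius inversion (at the cost of introducing an extra squarefree divisor of $r$ into the level) and exploit the identity $S(\pm n, m\overline{r}; sc) = S(\pm nr, m; sc)$, valid since $(r, sc) = 1$, to eliminate $\overline{r}$ inside the Kloosterman sum. The resulting $c$-sum takes the shape $\sum_{c \equiv 0 \Mod{s}} c^{-1} g_1(c,m,n,r,s)\, S(\pm nr, m; c)$, which by Kuznetsov's formula at level $s$ equals a sum of $\overline{\lambda_j(nr)}\lambda_j(m)\check g(t_j)/\cosh(\pi t_j)$ over the Maass cuspidal, Eisenstein continuous, and (in the $+$ case) holomorphic spectra of level $s$, where $\check g$ denotes the Bessel--Kuznetsov transform of $g_1$.

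Next, after dyadically localising $|t_j| \asymp T$, I would apply Cauchy--Schwarz to the spectral sum to separate the $m$-variable from $(n,r,s)$. Combined with the Hecke decomposition $\lambda_j(nr) \sim \lambda_j(n)\lambda_j(r)$ (up to routine corrections when $(n,r) > 1$), this reduces matters to two quadratic forms in Fourier coefficients that are bounded by the spectral large sieve of Deshouillers--Iwaniec~\cite{DI}, giving
\[
\sum_{\substack{s \sim S \\ |t_j| \asymp T}} \frac{1}{\cosh(\pi t_j)} \Big| \sum_{m \sim M} a_m \lambda_j(m) \Big|^2 \ll (T^2 S + M)^{1+\varepsilon} \|\mathbf{a}\|_2^2,
\]
and an analogous bound for the $(n,r,s)$-factor with $N$ in place of $M$ and $\|\mathbf{b}\|_2^2$ in place of $\|\mathbf{a}\|_2^2$ (the $r$-variable being absorbed via the Hecke splitting and its own large sieve). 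In the tempered range $T \lesssim \sqrt{RS}$ (dictated by the support of $g$ in $c$), the geometric mean of these two estimates, multiplied by the trivial size of $\check g$ at tempered spectral parameter, produces precisely the factor $CS\sqrt{R}\cdot\sqrt{RS}(1+\sqrt{M/(RS)})(1+\sqrt{N/(RS)})\|\mathbf{a}\|_2\|\mathbf{b}\|_2$ appearing in $L(C,M,N,R,S)$.

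The decisive refinement over the standard Deshouillers--Iwaniec bound is the treatment of the exceptional spectrum, i.e.\ Maass forms with $t_j = i\kappa_j$ and $0 < \kappa_j \leq 7/64$ (the Kim--Sarnak bound~\cite{KimS}). Stationary phase analysis of the Bessel integral defining $\check g(i\kappa_j)$ shows that this transform acquires a positive power of $X$ at imaginary spectral parameter, inflated relative to its tempered value. After tracking through the two separate large sieve estimates, using that the $m$-side weight gets effectively squared by the Cauchy--Schwarz step while the $(n,r,s)$-side weight appears linearly, the exceptional contribution at parameter $\kappa_j$ produces exactly the additional factor $\bigl(1 + X^2/((1+RS/M)^2(1+RS/N))\bigr)^{\kappa_j}$. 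Inserting the uniform bound $\kappa_j \leq 7/64$ then yields the claimed shape of $L(C,M,N,R,S)$.

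The hard part will be the precise analytic tracking of $\check g(i\kappa)$ and in particular the justification of the asymmetric denominator $(1+RS/M)^2(1+RS/N)$: this requires a careful stationary phase analysis of the Bessel transform, paying attention to how the Cauchy--Schwarz step introduces a square on the $m$-side weight while leaving the $(n,r,s)$-side weight linear, and to which derivatives of $g$ appear in the stationary phase remainder estimates. A secondary but essentially routine point is to verify that the holomorphic and Eisenstein contributions obey the same bound: Deligne's theorem handles the holomorphic case with $\theta = 0$, and the Eisenstein case reduces to bounds on Hecke eigenvalues of Eisenstein series at $\tfrac12 + it$ for which the Kim--Sarnak exponent $7/64$ is again available.
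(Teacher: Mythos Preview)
Your overall architecture (Kuznetsov, Cauchy--Schwarz, spectral large sieve, Kim--Sarnak on the exceptional spectrum) is right, but two steps are genuinely off.

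\textbf{The setup.} The identity $S(\pm n, m\overline{r}; sc) = S(\pm nr, m; sc)$ is false: substituting $x \mapsto rx$ gives $S(\pm nr, m\overline{r}^{\,2}; sc)$, and no substitution removes $\overline{r}$ (the best you get is $S(\pm n\overline{r}, m; sc)$, which is not an integer argument). Consequently the plan of reducing to level $s$ and invoking Hecke multiplicativity $\lambda_j(nr) \approx \lambda_j(n)\lambda_j(r)$ does not get off the ground. The paper instead follows Deshouillers--Iwaniec \cite[\S 9]{DI}: the $c$-sum with $(c,r)=1$ is exactly a sum of Kloosterman sums for the cusp pair $(\infty, 1/s)$ of $\Gamma_0(rs)$ (this is \cite[(9.2)]{DI}), and Kuznetsov for that cusp pair produces Fourier coefficients $\rho^{(rs)}_{j,\infty}(m)$ and $\rho^{(rs)}_{j,1/s}(n)$ at level $rs$, not Hecke eigenvalues at level $s$. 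There is no Hecke factorisation step at all.

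\textbf{The asymmetry $(1+RS/M)^2$ versus $(1+RS/N)$.} Your explanation is incorrect: both sides are squared by Cauchy--Schwarz, so that cannot be the source. The asymmetry comes from applying \emph{different} large sieve inequalities to the two factors. Since $a_m$ does not depend on $(r,s)$, on the $m$-side one may sum freely over the level $q = rs \sim RS$ and use the level-averaged large sieve \cite[Theorem~6]{DI}; the resulting bound $(Q + M + M\sqrt{X})$ leads, after the rescaling trick (Lemma~\ref{lem:bddYside}), to the denominator $(1+RS/M)^2$. On the $(n,r,s)$-side, $b_{n,r,s}$ depends on $(r,s)$, so one must work at fixed level and use \cite[Theorem~5]{DI} at the cusp $1/s$ (Lemma~\ref{lem:nsumnoFE}), which only yields $(1+RS/N)$. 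The choice $Z = 1+RS/N$, $Y = X^2/Z$ in the Cauchy--Schwarz split then produces the stated factor. Your stationary-phase account of the exceptional $\widehat f(i\kappa)$ is fine (and is \eqref{ourbound} in the paper), but it does not by itself explain the asymmetric denominator.

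A minor point: the Eisenstein and holomorphic contributions lie in the tempered range and are handled directly by the standard large sieve bounds in \cite[\S 9]{DI}; Kim--Sarnak plays no role there.
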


The proof is very similar to that of~\cite[Theorem 10]{DI}, with the main new input being the more recent progress in \cite{KS} towards bounds on exceptional eigenvalues.  We incorporate these bounds into our proof in much the same manner as (8.18) in \cite{DI} where an older $L^\infty$ bound on exceptional eigenvalues is incorporated.  A similar computation has appeared in other recent works, such as the work \cite{DPR}.

Here, we only point out differences, and freely borrow notation and definitions from~\cite{DI}. After using Kuznetsov formula, Deshouillers and Iwaniec use two large sieve type results for cusp form coefficients.

We will denote by $\rho^{(q)}_{j, \mathfrak a}(n)$ the $n$th coefficient of a Maa{\ss} form of level $q$ and eigenvalue $\lambda_j = \frac 14 + \kappa_j^2$, expanded around the cusp $\mathfrak{a}$ of $\Gamma_0(q)$ and with the coefficients normalized so that the Maa{\ss} form has $L^2$ norm equal to one.

The first large sieve type result is a special case of \cite[Theorem 5]{DI}, specialized to the case when the level is $r s$ with $(r,s) = 1$, and the Fourier coefficients of the Maa{\ss} forms are expanded around the cusp $\mathfrak a = 1/s$, so that in the notation of \cite[Section 1.1]{DI} we then have $\mu(\mathfrak{a}) = 1 / (rs)$. 

\begin{lem} \label{LSavgoverexceigen}
Let $r, s$ be non-negative integers, where $(r, s) = 1.$ Let $\mathbf{a} = (a_n)_{n \geq 1}$ denote a sequence of complex numbers. For any $X \geq 1$ and $\varepsilon > 0,$ we have
\begin{align*}
	\sum_{\lambda_j - \textrm{except}}^{(rs)} X^{2i\kappa_j} \left| \sum_{n \sim N} a_n \rho_{j, 1/s}^{(rs)}(n)\right|^2 \ll \left( 1 + \sqrt{\frac{NX}{rs}}\right) \left(1 + \sqrt{\frac{N^{1 + \varepsilon}}{rs}} \right) \| \mathbf{a} \|^2_2,
\end{align*}
where the implied constant depends only on $\varepsilon$. Here $\sum^{(rs)}$ denotes a sum over exceptional eigenvalues $\lambda_j$'s of the Hecke group $\Gamma_0(rs)$. 
\end{lem}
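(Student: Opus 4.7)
The plan is to adapt the proof of \cite[Theorem 5]{DI} to the level $q = rs$ and the cusp $\mathfrak{a} = 1/s$, so that $\mu(\mathfrak{a})^{-1} = rs$ in the notation of \cite{DI}. The core tool is the Kuznetsov trace formula, applied with a test function specifically designed to dominate $X^{2i\kappa_j}$ on the exceptional spectrum.

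First, I would construct a non-negative test function $h$ on the spectrum such that $h(\kappa_j) \geq X^{2i\kappa_j}$ for every exceptional $\kappa_j$ (recall the convention $i\kappa_j > 0$, so $X^{2i\kappa_j}$ is real and positive, and bounded above by $X^{2\theta}$ with $\theta \leq 7/64$). A convenient choice is
\[
h(\kappa) \;=\; (X^{2i\kappa} + X^{-2i\kappa})\,\phi(\kappa),
\]
where $\phi$ is a fixed smooth non-negative even majorant of the characteristic function of the exceptional range, with sufficient decay on the real axis to make the subsequent integrals converge. The LHS of the lemma is then dominated by the spectral sum $\sum_{j} h(\kappa_j)\,|M_j(\mathfrak{a}, N)|^2/\cosh(\pi\kappa_j)$ with $M_j(\mathfrak{a}, N) = \sum_{n \sim N} a_n \rho_{j, \mathfrak{a}}^{(rs)}(n)$, together with non-negative contributions from the continuous (and holomorphic) spectra that can only enlarge the bound.

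Next, I would apply the Kuznetsov formula at the cusp $\mathfrak{a} = 1/s$. This converts the spectral sum into an arithmetic expression: a diagonal term controlled by $\int h(\kappa)\,\kappa\tanh(\pi\kappa)\,d\kappa \cdot \|\mathbf{a}\|_2^2$, plus an off-diagonal sum of Kloosterman sums $S_{\mathfrak{a},\mathfrak{a}}(m, n; c)$ weighted by a Bessel-type integral transform $\check{h}$ of $h$, summed over moduli $c$ divisible by $rs$. Separating the two contributions and expanding the square in the standard large-sieve way, the Weil bound for the off-diagonal Kloosterman sums produces the classical factor $\sqrt{N^{1+\varepsilon}/(rs)}$, while the oscillatory factors $X^{\pm 2i\kappa}$ inside $h$ propagate through $\check{h}$ and produce the additional factor $\sqrt{NX/(rs)}$. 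Combining all contributions gives the claimed bound $(1 + \sqrt{NX/(rs)})(1 + \sqrt{N^{1+\varepsilon}/(rs)})\|\mathbf{a}\|_2^2$.

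The main technical obstacle will be the delicate analysis of the transform $\check{h}$: one must show that, after averaging over the Kloosterman moduli $c \equiv 0\Mod{rs}$, the oscillation $X^{\pm 2i\kappa}$ in $h$ contributes only a factor $\sqrt{NX/(rs)}$ to the bound, rather than the cruder pointwise $X^{2\theta}$ bound on individual exceptional eigenvalues combined with a trivial summation. The stationary point of $X^{2i\kappa}$-type oscillation against the Bessel kernel must be located and estimated carefully, paralleling the analysis in \cite[Sections 6--8]{DI}. This stationary-phase step is precisely what makes \cite[Theorem 5]{DI} a genuine \emph{spectral} large sieve rather than an eigenvalue-by-eigenvalue estimate, and it is where the cusp-dependence $\mu(\mathfrak{a})^{-1} = rs$ enters the final bound.
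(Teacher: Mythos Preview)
Your proposal is correct in outline, but it is far more work than the paper actually does. The paper does not give a proof of this lemma at all: it simply states the result as a direct specialization of \cite[Theorem~5]{DI}, noting that for level $q=rs$ with $(r,s)=1$ and cusp $\mathfrak{a}=1/s$ one has $\mu(\mathfrak{a})=1/(rs)$, so that the bound of \cite[Theorem~5]{DI} reads exactly as stated. What you have sketched is essentially the Deshouillers--Iwaniec proof itself---Kuznetsov with a test function majorizing $X^{2i\kappa}$ on the exceptional spectrum, Weil bound on the Kloosterman side, careful analysis of the Bessel transform---so your approach is the same as that underlying the cited result, just carried out rather than quoted. There is no gap in your plan; it is simply redundant once one is willing to cite \cite{DI}.
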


The second large sieve type result is \cite[Theorem 6]{DI}.
\begin{lem} \label{LAavgoverexceigenandlevel}
Let $X, Q, N, \varepsilon$ be positive numbers and $\mathbf a = (a_{n})_{n \geq 1}$ denote a sequence of complex numbers. Then we have
\begin{align*}
\sum_{q \leq Q} \sum_{\lambda_j - \textrm{except}}^{(q)} X^{4i\kappa_j} \left| \sum_{n \sim N} a_n \rho_{j, \infty}^{(q)}(n) \right|^2 \ll (QN)^{\varepsilon} \left( Q + N + NX\right)\| \mathbf{a}\|^2_2,
\end{align*}
where the implied constant depends only on $\varepsilon$. Here $\sum^{(q)}$ denotes a sum over exceptional eigenvalues $\lambda_j$'s of the Hecke group $\Gamma_0(q)$.
\end{lem}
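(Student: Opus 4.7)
The statement is \cite[Theorem 6]{DI}, so the plan is to adopt the Deshouillers--Iwaniec strategy. The central step is to apply the Kuznetsov trace formula for $\Gamma_0(q)$ with cusps $\infty,\infty$ for each $q\le Q$, using a test function $h$ tailored so that its spectral transform $h(\kappa_j)$ is bounded below by $|X^{4i\kappa_j}| = X^{-4\,\mathrm{Im}\,\kappa_j}$ on the exceptional spectrum $\kappa_j \in i(0,\theta]$ and is everywhere nonnegative. By positivity of the remaining spectral and continuous contributions (which one discards by absorbing them into the majorant), the left-hand side of the lemma is then bounded by the arithmetic side of Kuznetsov summed over $q \le Q$.

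On the arithmetic side one is left with a weighted sum of Kloosterman sums of the shape
$$\sum_{q\le Q}\sum_{c\ge 1}\frac{S(m,n;cq)}{cq}\,\phi\!\left(\frac{4\pi\sqrt{mn}}{cq}\right),$$
together with a diagonal contribution, where $\phi$ is the Bessel transform of $h$. After opening the Kloosterman sums into additive characters $\mathrm{e}((ma+n\bar a)/cq)$ and making a dyadic decomposition in $c$, one applies the classical additive large sieve (analogous to~\cite[Theorem 7.11]{IK}) in the $m$ and $n$ variables. The diagonal contribution produces the term $Q\|\mathbf{a}\|_2^2$, while the off-diagonal Kloosterman contribution produces the $N\|\mathbf{a}\|_2^2$ and $NX\|\mathbf{a}\|_2^2$ terms; the factor $X$ enters through the effective support radius of $\phi$, which is forced to broaden as $h$ is required to grow on the exceptional parameters.

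The principal technical obstacle is the construction of a valid test function $h$ satisfying simultaneously (i) nonnegativity on the full spectral line $\mathbb{R}\cup i[-\theta,\theta]$, (ii) the lower bound $h(it) \gtrsim X^{-4t}$ for $0 < t \le \theta$, and (iii) a Bessel transform $\phi$ with enough decay that the Kloosterman-sum estimation succeeds. This requires explicit manipulation of $J_{2ir}$ and $K_{2ir}$ for small imaginary $r$ and a careful majorant construction; this is the delicate core of DI's proof. Once $h$ is in place, the remaining estimates reduce to a routine combination of opening Kloosterman sums, Poisson summation in the modulus, and the additive large sieve, and the stated bound $(Q+N+NX)(QN)^\varepsilon\|\mathbf{a}\|_2^2$ follows after collecting terms.
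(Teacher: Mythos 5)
The paper does not give a proof of this lemma at all: it states it as a verbatim citation of Theorem~6 of Deshouillers--Iwaniec \cite{DI}, and you correctly identify that. Since the paper's entire ``proof'' is the reference, your sketch of the Kuznetsov-plus-test-function-plus-large-sieve strategy is supplementary material rather than a reconstruction of anything in this paper, but it is a fair description of the DI argument and correctly isolates the genuinely delicate step (constructing a spectral test function that is nonnegative on the whole spectrum, dominates $X^{4i\kappa_j}$ on the exceptional segment $\kappa_j\in i(0,\theta]$, and has a Bessel transform controlled enough for the Kloosterman-sum side). The only caution I would add is that the detailed execution in DI is not quite a ``routine'' large-sieve application once the test function is in place --- the bookkeeping that produces the three terms $Q$, $N$, $NX$ (and in particular keeps the $X$-dependence from leaking a full power of $Q$) is itself intricate --- so if you intended to supply a self-contained proof rather than a citation, considerably more work would be required at that stage.
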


We can refine the above two lemmas using the bound for exceptional eigenvalues $\kappa_j$ from Kim and Sarnak's work \cite{KimS}, which states that when $\lambda_j = 1/4 + \kappa_j^2$ is exceptional, then $0 < |i\kappa_j| \leq \frac{7}{64}.$  The choice of sign of $i \kappa_j$ is irrelevant, and for convenience, we will assume 
\begin{equation} \label{bound:Kims}
0 < i\kappa_j \leq \frac{7}{64}.
\end{equation}

\begin{lem} \label{lem:bddYside}
Let $Q, Y \geq 1$ and $\mathbf{a} = (a_m)_{m \geq 1}$ denote a sequence of complex numbers. Then, for every $\varepsilon > 0$, 
\begin{align*}
\sum_{q \sim Q} \sum_{\lambda_j -\textrm{except}}^{(q)} Y^{2i\kappa_j}\left|   \sum_{m \sim M} a_m \overline{\rho_{j, \infty}^{(q)} (m)}  \right|^2 \ll_{\varepsilon} ( Q M)^{\varepsilon} \cdot (Y_1^{7/32}  + 1) (Q + M) \|\mathbf{a} \|_2^2,
\end{align*}
where 
$$Y_1 = \frac{Y}{1+ \bfrac{Q}{M}^2}.$$
\end{lem}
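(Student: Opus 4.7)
The strategy is to interpolate between Lemma~\ref{LAavgoverexceigenandlevel} and the Kim--Sarnak pointwise bound \eqref{bound:Kims}. For every exceptional $\lambda_j$ we have $0 < i\kappa_j \leq 7/64$, so since $Y \geq 1$, for any parameter $\alpha \in [0,1]$ to be chosen we may split
\[
Y^{2i\kappa_j} = Y^{2i\kappa_j(1-\alpha)} \cdot Y^{2i\kappa_j\alpha} \leq Y^{7(1-\alpha)/32}\cdot Y^{2i\kappa_j\alpha}.
\]
Inserting this into the sum in the statement (the inner squared absolute value is non-negative) and applying Lemma~\ref{LAavgoverexceigenandlevel} with $X = Y^{\alpha/2}$, so that $X^{4i\kappa_j} = Y^{2i\kappa_j\alpha}$, we obtain
\[
\sum_{q \sim Q}\sum_{\lambda_j-\textrm{except}}^{(q)} Y^{2i\kappa_j}\Big|\sum_{m \sim M}a_m\overline{\rho_{j,\infty}^{(q)}(m)}\Big|^2 \ll_\varepsilon (QM)^\varepsilon\, Y^{7(1-\alpha)/32}\bigl(Q+M+MY^{\alpha/2}\bigr)\|\mathbf{a}\|_2^2.
\]

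It then suffices to choose $\alpha$ optimally. If $Y \leq (1+Q/M)^2$, then $Y_1 \leq 1$, and we take $\alpha = 1$: since $MY^{1/2} \leq M(1+Q/M) \leq Q+M$, the right-hand side collapses to $\ll (QM)^\varepsilon (Q+M)\|\mathbf{a}\|_2^2$, which matches the claim. If instead $Y > (1+Q/M)^2$, we choose $\alpha := 2\log(1+Q/M)/\log Y \in (0,1)$, so that $Y^{\alpha/2} = 1+Q/M$ and hence $MY^{\alpha/2} \leq Q+M$. The bound becomes
\[
\ll_\varepsilon (QM)^\varepsilon\, Y^{7/32}(1+Q/M)^{-7/16}(Q+M)\|\mathbf{a}\|_2^2 \asymp (QM)^\varepsilon\, Y_1^{7/32}(Q+M)\|\mathbf{a}\|_2^2,
\]
using that $(1+(Q/M)^2)^{7/32} \asymp (1+Q/M)^{7/16}$. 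Combining the two cases yields the claimed estimate.

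The only obstacle here is algebraic rather than conceptual: the parameter $\alpha$ has to be tuned precisely at the threshold beyond which $MY^{\alpha/2}$ would start dominating $Q+M$ in Lemma~\ref{LAavgoverexceigenandlevel}, and this tuning is exactly what manufactures the improved denominator $(1+(Q/M)^2)^{7/32}$ appearing in $Y_1^{7/32}$ (rather than a bare $Y^{7/32}$ from a naive Kim--Sarnak invocation). No new analytic input is needed beyond Lemma~\ref{LAavgoverexceigenandlevel} and \eqref{bound:Kims}.
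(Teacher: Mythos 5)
Your proof is correct and takes essentially the same approach as the paper: both split $Y^{2i\kappa_j}$ into a factor bounded pointwise via Kim--Sarnak and a remaining factor fed into Lemma~\ref{LAavgoverexceigenandlevel}. The paper simply writes the split as $Y = Y_1 \cdot (Y/Y_1)$ directly and notes $Y_1^{2i\kappa_j} \le Y_1^{7/32}+1$ and $M\sqrt{Y/Y_1} \ll Q+M$, whereas you reach the same factorization by parameterizing the split with $\alpha$ and optimizing, arriving at the identical $Y_1$.
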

\begin{proof}
By~\eqref{bound:Kims} and Lemma \ref{LAavgoverexceigenandlevel}
\begin{align*}
&\sum_{q \sim Q} \sum_{\lambda_j -\textrm{except}}^{(q)} Y^{2i\kappa_j}\left|   \sum_m a_m \overline{\rho_{j, \infty}^{(q)} (n)}  \right|^2 \\
& \ll (Y_1^{7/32}+1) \sum_{q \sim Q} \sum_{\lambda_j -\textrm{except}}^{(q)} (Y/Y_1)^{2i\kappa_j}\left|   \sum_m a_m \overline{\rho_{j, \infty}^{(q)} (n)}  \right|^2  \\ 
&\ll_{\varepsilon} (QM)^{\varepsilon} \cdot \Big ( Y_1^{7/32} + 1 \Big ) \left( Q + M + M \sqrt{Y/Y_1} \right) \|\mathbf{a} \|_2^2 \\ & \ll (Q M)^{\varepsilon} \cdot \Big ( Y_1^{7/32} + 1 \Big) (Q + M) \|\mathbf{a} \|_2^2,
\end{align*}
as needed. 
\end{proof}

\begin{lem}
  \label{lem:nsumnoFE}
Let $R, S, Z \geq 1$ and let $\mathbf{b} = (b_{n,r,s})_{n,r,s \geq 1}$ denote a sequence of complex numbers. Then, for every $\varepsilon > 0$, 
\begin{align*}
&\sumtwo_{\substack{r \sim R, s \sim S \\ (r,s) =1}} \sum_{\lambda_j -\textrm{except}}^{(rs)} Z^{2i\kappa_j} \left| \sum_{n \sim N} \ b_{n,r,s} \ \rho_{j,  1/s}^{(rs)} (n) \right|^2 \ll_{\varepsilon} (Z_1^{7/32}+1) \left( 1 + \frac{N^{1 + \varepsilon}}{RS} \right)\| \mathbf{b} \|_2^2,
\end{align*}
where
\[
Z_1 = \frac{Z}{1+\frac{RS}{N}}.
\]
\end{lem}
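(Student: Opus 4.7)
The proof plan closely parallels that of Lemma~\ref{lem:bddYside}, with the role of Lemma~\ref{LAavgoverexceigenandlevel} played instead by Lemma~\ref{LSavgoverexceigen}. The key observation is that $Z_1$ is defined precisely so that the ratio $Z/Z_1 = 1 + RS/N$ is the correct parameter to feed into Lemma~\ref{LSavgoverexceigen}: there the spectral exponential is $X^{2i\kappa_j}$ and the bound contains the factor $(1+\sqrt{NX/rs})$, so $X \asymp 1 + RS/N$ makes $NX/rs \asymp N/RS + 1$, producing only an acceptable $(1+\sqrt{N/RS})$ factor.

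The first step is to factor
\[
Z^{2i\kappa_j} = Z_1^{2i\kappa_j} \cdot (Z/Z_1)^{2i\kappa_j}
\]
and invoke the Kim--Sarnak bound~\eqref{bound:Kims}, giving $|Z_1^{2i\kappa_j}| \le Z_1^{7/32}$ when $Z_1 \ge 1$ and $\le 1$ otherwise, hence $|Z_1^{2i\kappa_j}| \ll Z_1^{7/32} + 1$ in all cases. Pulling this factor out of the spectral sum and the $(r,s)$-sum leaves
\[
(Z_1^{7/32}+1) \sumtwo_{\substack{r \sim R, s \sim S \\ (r,s)=1}} \sum_{\lambda_j-\text{except}}^{(rs)} (Z/Z_1)^{2i\kappa_j} \left|\sum_{n \sim N} b_{n,r,s}\, \rho_{j,1/s}^{(rs)}(n)\right|^2.
\]

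The second step is to apply Lemma~\ref{LSavgoverexceigen} with $X = Z/Z_1 = 1+RS/N$ and, for each fixed pair $(r,s)$, the sequence $\mathbf{a} = (b_{n,r,s})_{n \geq 1}$. This yields, for every $\varepsilon > 0$,
\[
\sum_{\lambda_j-\text{except}}^{(rs)} (Z/Z_1)^{2i\kappa_j} \left|\sum_{n \sim N} b_{n,r,s}\, \rho_{j,1/s}^{(rs)}(n)\right|^2 \ll \left(1 + \sqrt{\tfrac{NX}{rs}}\right)\left(1 + \sqrt{\tfrac{N^{1+\varepsilon}}{rs}}\right) \sum_n |b_{n,r,s}|^2.
\]
Since $rs \asymp RS$ and $NX/rs \asymp N/RS + 1$, we have $\sqrt{NX/rs} \ll 1 + \sqrt{N/RS}$, and so
\[
\left(1 + \sqrt{\tfrac{NX}{rs}}\right)\left(1 + \sqrt{\tfrac{N^{1+\varepsilon}}{rs}}\right) \ll 1 + \tfrac{N^{1+\varepsilon}}{RS},
\]
after absorbing a $(\log Q)^{O(1)}$-type factor into $N^\varepsilon$.

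The third step is simply to sum the resulting bound over $r \sim R$, $s \sim S$, which yields
\[
(Z_1^{7/32}+1)\left(1 + \tfrac{N^{1+\varepsilon}}{RS}\right) \sum_{r \sim R, s \sim S} \sum_n |b_{n,r,s}|^2 = (Z_1^{7/32}+1)\left(1 + \tfrac{N^{1+\varepsilon}}{RS}\right) \|\mathbf{b}\|_2^2,
\]
which is the claimed bound. There is no essential obstacle here beyond choosing the correct splitting of $Z^{2i\kappa_j}$; the only technical subtlety is making sure that the $\sqrt{NX/rs}$ factor remains tame under the choice $X = 1+RS/N$, which is exactly the motivation behind the definition of $Z_1$.
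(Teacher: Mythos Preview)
Your proposal is correct and follows essentially the same approach as the paper's own proof: factor $Z^{2i\kappa_j} = Z_1^{2i\kappa_j}(Z/Z_1)^{2i\kappa_j}$, bound the first factor by Kim--Sarnak, apply Lemma~\ref{LSavgoverexceigen} with $X = Z/Z_1 = 1+RS/N$, and then sum over $r,s$. The only cosmetic difference is that the paper writes $\sqrt{\frac{N}{RS}\cdot\frac{Z}{Z_1}}$ directly rather than first writing $\sqrt{NX/rs}$ and then substituting, and no $(\log Q)^{O(1)}$ absorption is actually needed since $(1+\sqrt{N/RS})(1+\sqrt{N^{1+\varepsilon}/RS}) \ll 1 + N^{1+\varepsilon}/RS$ holds outright.
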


\begin{proof} Applying Lemma \ref{LSavgoverexceigen},
\begin{align*}
&\sumtwo_{\substack{r \sim R, s \sim S \\ (r,s) =1}} \sum_{\lambda_j -\textrm{except}}^{(rs)} Z^{2i\kappa_j} \left| \sum_{n \sim N} \ b_{n,r,s} \ \rho_{j,  1/s}^{(rs)} (n) \right|^2 \\
&\ll (Z_1^{7/32}+1) \sumtwo_{\substack{r \sim R, s \sim S \\ (r,s) =1}} \sum_{\lambda_j -\textrm{except}}^{(rs)} (Z/Z_1)^{2i\kappa_j} \left| \sum_{n \sim N} \ b_{n,r,s} \ \rho_{j,  1/s}^{(rs)} (n) \right|^2 \\
&\ll (Z_1^{7/32} + 1) \cdot \sum_{\substack{r \sim R, s \sim S \\ (r,s) = 1}} \left( 1 + \sqrt{\frac{N}{RS} \cdot \frac{Z}{Z_1}} \right) \left( 1 + \sqrt{\frac{N^{1 + \varepsilon}}{RS}} \right) \cdot \Big ( \sum_{n} |b_{n,r,s}|^2 \Big ) \\ & \ll (Z_1^{7/32} + 1) \cdot \left( 1 + \frac{N^{1 + \varepsilon}}{RS} \right)\| \mathbf{b} \|_2^2.
\end{align*}
\end{proof}

\begin{proof}[Proof of Lemma~\ref{le:Klo1}]
Similarly to~\cite[Proof of Theorem 10]{DI} we restrict to the case where
\[
g(c,m,n,r,s) = \frac{CS \sqrt{R}}{cs\sqrt{r}} f\left(\frac{4\pi \sqrt{mn}}{cs\sqrt{r}}\right)
\]
with $f$ a smooth function supported on $[X^{-1}, 2X^{-1}]$, where $X$ is defined as in \eqref{def:X}, and with $|f^{(\nu)}(x)| \ll x^{-\nu}$ for every $\nu \geq 0$. 
By \cite[formula (9.2)]{DI} we have
\[
\mathcal{L}^{\pm}(C, M, N, R, S) = CS\sqrt{R} \sumfour_{\substack{r \sim R, s \sim S, m \sim M, n \sim N \\ (r, s) = 1}}  a_{m} b_{n, r, s} \ex\left( -n \frac{\overline{s}}{r} \right) \sum^\Gamma \frac{1}{\gamma} f\left(\frac{4\pi \sqrt{mn}}{\gamma}\right) S_{\infty \frac{1}{s}}(m, \pm n, \gamma).
\]
Now we apply Kuznetsov's formula from Theorem \cite[Theorem 1]{DI}. All the terms except the contribution from the exceptional spectrum are treated in the same way as in \cite[Section 9]{DI}. We obtain from~\cite[formula (9.4)]{DI}, using $X \gg 1$, that the contribution of holomorphic, continuous and regular spectrum to $\mathcal{L}^{\pm}(C, M, N, R, S)$ is bounded by
\begin{align*}
\ll (CMNRS)^{\varepsilon} \sqrt{RS} \frac{(CS\sqrt{R} + C\sqrt{SM})(CS\sqrt{R}+C\sqrt{SN})}{CS\sqrt{R}} \|\mathbf{a} \|_2 \| \mathbf{b} \|_2.
\end{align*}
Rearranging, we see that this is acceptable.
Furthermore, writing $b'_{n,r,s} = b_{n,r,s} e(-n\frac{\overline{s}}{r})$, the contribution $\mathcal{L}_{exc}(C, M, N, R, S)$ of the exceptional spectrum is 
\begin{align*}
CS\sqrt{R} \sumtwo_{\substack{r \sim R, s \sim S \\ (r, s) = 1}} \sum_{\lambda_j -\textrm{except}}^{(rs)} \frac{\widehat f (\kappa_j)}{\cosh (\pi \kappa_j)} \left( \sum_m a_m \overline{\rho_{j, \infty}^{(rs)} (m)} \right) \left( \sum_{n}  b'_{n, r, s} \rho_{j,  1/s}^{(rs)}(n) \right),
\end{align*}
where (and for the rest of the proof of this Lemma) $\widehat{f}$ denotes the Bessel-Kuznetsov transform defined in \cite[Equation (1.22)]{DI} and not the usual Fourier transform. By Lemma 1 of \cite{BHM} (see also the proof of \cite[Equation (7.1)]{DI}), for $-\frac14 < r < \frac 14$,  

\begin{equation} \label{ourbound} 
\widehat{f}(i r) \ll X^{2 |r|}.\end{equation}

In applying this bound, note that our $f$ is supported in $[X^{-1}, 2 X^{-1}]$ while the $f$ in Lemma 1 of \cite{BHM} is supported in $x \asymp X$. Recalling that we have picked $i \kappa_j > 0$, so $\hat{f}(\kappa_j) \ll X^{2 i \kappa_j}$, and thus

\begin{align} \label{eqn:CSforU4excbigR2}
  \begin{aligned}
&\mathcal{L}_{exc}(C, M, N, R, S) \\
&\ll CS\sqrt{R} \left(\sumtwo_{\substack{r \sim R, s \sim S \\ (r,s) = 1}} \sum_{\lambda_j -\textrm{except}}^{(rs)} Y^{2i\kappa_j}\left|   \sum_m a_m \overline{\rho_{j, \infty}^{(rs)} (n)}  \right|^2 \right)^{1/2} \left( \sumtwo_{\substack{r \sim R, s \sim S \\ (r,s) = 1}}\sum_{\lambda_j -\textrm{except}}^{(rs)} Z^{2i\kappa_j}\left|\sum_{n} b'_{n,r,s} \ \rho_{j, 1/s}^{(rs)} (n)\right|^2\right)^{1/2}
  \end{aligned}
\end{align}
for any $Y, Z$ such that $YZ = X^2$, where $Y, Z \ge 1$ are parameters to be determined.  

Now Lemma~\ref{le:Klo1} follows by combining Lemmas~\ref{lem:bddYside} and~\ref{lem:nsumnoFE} with~\eqref{eqn:CSforU4excbigR2}, choosing e.g. $Z = 1+RS/N$.
\end{proof}

\section{The case $d\gamma \kappa$ is small}
\label{se:dgamsmall}
\subsection{Initial reductions}
We aim to prove Proposition~\ref{prop:d<D}. Looking back to \eqref{masterEq}, replacing $S(\overline{\nu_2} f, n e\overline{\nu_1 g}; r)$ by $S(e f, n\overline{\nu_1 \nu_2 g}; r)$ (which can be done since $(e\nu_2, r) = 1$), we find that
\begin{align*}
\calS(d\gamma \kappa \leq D) &= \frac{\sqrt{EF}}{\sqrt{GN}} \sum_{\substack{d, \gamma, \kappa \\ (d, \gamma) = 1 \\ d\gamma \kappa \leq D}} \mu(d) \mu (\gamma) \sum_{r} \frac{\phi(\gamma \kappa r)}{\gamma \kappa r^2} \Psi\bfrac{d\gamma \kappa r}{Q}  \sum_{\substack{\nu_1 \mid d \\ (\nu_1, r\kappa) = 1}} \frac{\mu(\nu_1)}{\nu_1} \sum_{\substack{\nu_2 \mid d\gamma \kappa \\ (\nu_2, r) = 1}}  \frac{\mu(\nu_2)}{\nu_2} \\
&\cdot \sumfour_{\substack{e \neq 0, \; f \neq 0, \; g, \; n \\ (e, r) = (n, d) =  1 \\ (g, d\gamma \kappa r) = 1}}  S(ef, n\overline{\nu_1\nu_2 g}; r) \tau_3(\gamma n)  \widehat{V}\bfrac{fF}{\nu_2 r} \widehat V\bfrac{Ee}{\nu_1 \gamma r} V\bfrac{g}{G} V\bfrac{\gamma n}{N}.
\end{align*}
We write
\begin{align*}
\frac{\phi(\gamma \kappa r)}{\gamma \kappa r} = \sum_{a \mid \gamma \kappa r} \frac{\mu(a)}{a} = \sumthree_{\substack{\gamma_1 \mid \gamma, \; \kappa_1 \mid \kappa, \; r_1 \mid r \\ (\kappa_1 r_1, \gamma/\gamma_1) = (r_1, \kappa/\kappa_1) = 1}} \frac{\mu(\gamma_1 \kappa_1 r_1)}{\gamma_1 \kappa_1 r_1} = \sumthree_{\substack{\gamma = \gamma_1 \gamma_2, \; \kappa = \kappa_1 \kappa_2, \; r = r_1 r_2 \\ (\kappa_1 r_1, \gamma_2) = (r_1, \kappa_2) = 1}} \frac{\mu(\gamma_1 \kappa_1 r_1)}{\gamma_1 \kappa_1 r_1}.
\end{align*}
Writing also $\mu(\gamma_1 \gamma_2) = \mathbf{1}_{(\gamma_1, \gamma_2) = 1} \mu(\gamma_1) \mu(\gamma_2)$, we see that
\begin{align*}
  \mathcal{S}(d\gamma\kappa \leq D) &= \frac{\sqrt{EF}}{\sqrt{GN}} \sumthree_{\substack{d, \gamma_1, \gamma_2, \kappa_1, \kappa_2 \\ (d, \gamma_1 \gamma_2) = (\gamma_1 \kappa_1, \gamma_2) = 1 \\ d\gamma_1 \gamma_2 \kappa_1 \kappa_2 \leq D}} \mu(d) \mu(\gamma_1) \mu(\gamma_2) \sum_{\substack{r_1 \\ (r_1, \gamma_2 \kappa_2) = 1}} \frac{\mu(\kappa_1 \gamma_1 r_1)}{\kappa_1 \gamma_1 r_1^2}  \sum_{\substack{\nu_1 \mid d \\ (\nu_1, \kappa_1 \kappa_2 r_1) = 1}} \frac{\mu(\nu_1)}{\nu_1}\\
  & \cdot \sum_{\substack{\nu_2 \mid d\gamma_1 \gamma_2 \kappa_1 \kappa_2 \\ (\nu_2, r_1) = 1}} \frac{\mu(\nu_2)}{\nu_2} \sumfour_{\substack{e \neq 0, \; f \neq 0, \; g, \; n \\ (e, r_1) = (n, d) = 1 \\ (g, d\gamma_1 \gamma_2 \kappa_1 \kappa_2 r_1) = 1}}  \sum_{\substack{r_2 \\ (r_2, e \nu_1 \nu_2 g) = 1}} \frac{1}{r_2}    S(e f, n\overline{\nu_1 \nu_2 g}; r_1 r_2) \\
&\cdot \widehat V\bfrac{Ee}{\nu_1 \gamma_1 \gamma_2 r_1 r_2} \Psi\bfrac{d\gamma_1 \gamma_2 \kappa_1 \kappa_2 r_1 r_2}{Q} \tau_3(\gamma_1 \gamma_2 n)  \widehat{V}\bfrac{fF}{\nu_2 r_1 r_2} V\bfrac{g}{G} V\bfrac{\gamma_1 \gamma_2 n}{N}.
\end{align*}

Next we remove the condition $(r_2, e) = 1$ using M\"obius inversion (inroducing $\mu(\omega)$). We get
\begin{align*}
\mathcal{S}(d\gamma\kappa \leq D) &= \frac{\sqrt{EF}}{\sqrt{GN}} \sumthree_{\substack{d, \gamma_1, \gamma_2, \kappa_1, \kappa_2 \\ (d, \gamma_1 \gamma_2) = (\gamma_1 \kappa_1, \gamma_2) = 1 \\ d\gamma_1 \gamma_2 \kappa_1 \kappa_2 \leq D}} \mu(d) \mu(\gamma_1) \mu(\gamma_2) \sum_{(r_1, \gamma_2 \kappa_2) = 1} \frac{\mu(\kappa_1 \gamma_1 r_1)}{\kappa_1 \gamma_1 r_1^2} \sum_{\substack{\nu_1 \mid d \\ (\nu_1, \kappa_1 \kappa_2 r_1) = 1}} \frac{\mu(\nu_1)}{\nu_1}   \\
&\cdot \sum_{\substack{\nu_2 \mid d\gamma_1 \gamma_2 \kappa_1 \kappa_2 \\ (\nu_2, r_1) = 1}} \frac{\mu(\nu_2)}{\nu_2} \sum_{\substack{\omega \\ (\omega, r_1g \nu_1 \nu_2) = 1}} \mu(\omega) \sumfour_{\substack{e \neq 0, \; f \neq 0, \; g, \; n \\ (e, r_1) = (n, d) = 1 \\ (g, d\gamma_1 \gamma_2 \kappa_1 \kappa_2 r_1) = 1}} \sum_{\substack{r_2 \\ (r_2, \nu_1 \nu_2 g) = 1}} \frac{1}{\omega r_2}    S(\omega e f, n\overline{\nu_1 \nu_2 g}; r_1 \omega r_2) \\
&\cdot \widehat V\bfrac{Ee}{\nu_1 \gamma_1 \gamma_2 r_1 r_2} \Psi\bfrac{d\gamma_1 \gamma_2 \kappa_1 \kappa_2 \omega r_1 r_2}{Q} \tau_3(\gamma_1 \gamma_2 n)  \widehat{V}\bfrac{fF}{\nu_2 r_1 r_2 \omega} V\bfrac{g}{G} V\bfrac{\gamma_1 \gamma_2 n}{N}.
\end{align*}
Notice that with an error $O(Q^{-10})$ we can, using decay properties of $\widehat{V}$ and support of $\Psi$, restrict to
\begin{align*}
\omega ef &\ll \omega \cdot \frac{\nu_1 \gamma_1 \gamma_2 r_1 r_2}{E} \cdot \frac{\nu_2 r_1 r_2 \omega}{F} Q^\varepsilon = (d \gamma_1 \gamma_2 \kappa_1 \kappa_2 \omega r_1 r_2)^2 \cdot \frac{\nu_1 \nu_2}{EF d^2 \gamma_1 \gamma_2 (\kappa_1 \kappa_2)^2} Q^\varepsilon \\
&\ll Q^2 \frac{\nu_1 \nu_2}{EF d^2 \gamma_1 \gamma_2 (\kappa_1 \kappa_2)^2} Q^\varepsilon
\end{align*}
and
\begin{equation}
\label{eq:omegabound}
\omega \leq \frac{2Q}{d \gamma_1 \gamma_2 \kappa_1 \kappa_2 r_1 r_2} \ll \frac{Q^{1+\varepsilon} \nu_1}{|e| E \kappa_1 \kappa_2 d} \ll \frac{Q^{1+\varepsilon}}{E}.
\end{equation}

We split variables dyadically, so that $\kappa_j \sim \mathcal{K}_j$, $\omega \sim \Omega$, $\nu_j \sim \mathcal{V}_j$, $d \sim \mathcal{D}$, $\gamma_j \sim \mathcal{G}_j$, $r_1 \sim R_1$. Then
\begin{equation}
\begin{split}
\label{eq:tildedefs}
r_2 &\asymp \widetilde{C} := \frac{Q}{\mathcal{D} \mathcal{G}_1 \mathcal{G}_2 \mathcal{K}_1 \mathcal{K}_2 \Omega R_1}, \quad n \asymp \widetilde{M} := \frac{N}{\mathcal{G}_1 \mathcal{G}_2}, \quad |\omega e f| \asymp \widetilde{N} \in \left[1, \frac{\mathcal{V}_1 \mathcal{V}_2 Q^{2+\varepsilon}}{\mathcal{D}^2\mathcal{G}_1 \mathcal{G}_2 \mathcal{K}_1^2\mathcal{K}_2^2 EF}
\right], \\
\nu_1 \nu_2 g &\asymp \widetilde{R} := \mathcal{V}_1 \mathcal{V}_2 G, \quad r_1 \omega \asymp \widetilde{S} := R_1 \Omega.
\end{split}
\end{equation}

Ignoring the need to separate the variables in some of the smooth factors (which can be done by standard applications of integral transformations), the contribution of a given dyadic part to $\mathcal{S}(d\gamma\kappa \leq D)$ is essentially, for some $\kappa_j \sim \mathcal{K}_j, d \sim \mathcal{D}, \gamma_j \sim \mathcal{G}_j$, $1 \leq j \leq 2$ and with $g$ a smooth function satisfying the conditions of Lemma \ref{le:Klo1}, 
\begin{align}
\label{eq:abnamed}
\ll \mathcal{T} := Q^\varepsilon \frac{\sqrt{EF}}{\sqrt{GN}} \mathcal{K}_2 \mathcal{D} \mathcal{G}_2 \frac{1}{\widetilde{C}} \left| \sum_{\substack{\widetilde{r} \sim \widetilde{R} \\ \widetilde{s} \sim \widetilde{S} \\ (\widetilde{r}, \widetilde{s}) = 1}} \sum_{\substack{\widetilde{m} \sim \widetilde{M} \\ \widetilde{n} \sim \widetilde{N}}} a_{\widetilde{m}} b_{\widetilde{n}, \widetilde{r}, \widetilde{s}} \sum_{\substack{\widetilde{c} \sim \widetilde{C} \\ (\widetilde{c}, \widetilde{r}) = 1}} g(\widetilde{c}, \widetilde{m}, \widetilde{n}, \widetilde{r}, \widetilde{s}) S(\pm\widetilde{n}, \widetilde{m}\overline{\widetilde{r}}, \widetilde{s}\widetilde{c})\right|,
\end{align}
where 
\[
a_{\widetilde{m}} = \tau_3(\gamma_1 \gamma_2 \widetilde{m}) \mathbf{1}_{(\widetilde{m}, d) = 1}, \quad b_{\widetilde{n}, \widetilde{r}, \widetilde{s}} = \sumthree_{\substack{\widetilde{n} = \omega e f, \; \widetilde{s} = r_1 \omega, \; \widetilde{r} = \nu_1 \nu_2 g \\ (\omega, r_1 \widetilde{r}) = (e \gamma \kappa_2, r_1) = 1 \\ \nu_1 \mid d, \; \; \nu_2 \mid d\gamma_1 \gamma_2 \kappa_1 \kappa_2 \\ (\nu_1, \kappa_1 \kappa_2 r_1) = (\nu_2, r_1) = 1 \\ (g, d\gamma \kappa_1 \kappa_2 r_1) = 1 \\ w \sim \Omega, \; r_1 \sim R_1 \\ \nu_1 \sim \mathcal{V}_1, \; \nu_2 \sim \mathcal{V}_2}} \frac{\mu(\omega)}{\omega} \frac{\mu(\kappa_1 \gamma_1 r_1)}{r_1^2} \frac{\mu(\nu_1)}{\nu_1} \frac{\mu(\nu_2)}{\nu_2} V \Big ( \frac{g}{G} \Big ).
\]

\subsection{Applying the Kloosterman sum bounds}
Now we are ready to apply Lemma~\ref{le:Klo1} to~\eqref{eq:abnamed}. Notice first that

$$\Vert a_{\widetilde{m}} \Vert_2^2 \ll Q^\varepsilon \widetilde{M} \asymp Q^\varepsilon \frac{N}{\mathcal{G}_1 \mathcal{G}_2}, $$
\begin{align*}
\Vert b_{\widetilde{n}, \widetilde{r}, \widetilde{s}} \Vert_2^2 &\leq \frac{1}{(\mathcal{V}_1 \mathcal{V}_2)^2 \Omega^2 R_1^4} \sum_{\substack{\widetilde{n} \sim \widetilde{N}}} \sum_{\widetilde{r} \sim \widetilde{R}} \sum_{\widetilde{s} \sim \widetilde{S}} \Biggl|\sumthree_{\substack{\widetilde{n} = \omega e f, \, \widetilde{s} = r_1 \omega, \, \widetilde{r} = \nu_1 \nu_2 g \\ \nu_1 \mid d, \nu_2 \mid d\gamma_1 \gamma_2 \kappa_1 \kappa_2 \\ \omega \sim \Omega, r_1 \sim R_1 \\ \nu_1 \sim \mathcal{V}_1, \nu_2 \sim \mathcal{V}_2}} 1 \Biggr|^2 \ll \frac{Q^\varepsilon}{(\mathcal{V}_1 \mathcal{V}_2)^2 \Omega^2 R_1^4} \cdot \frac{\widetilde{N}\widetilde{S}}{\Omega} \cdot \frac{\widetilde{R}}{\mathcal{V}_1 \mathcal{V}_2} \\
&\ll Q^{\varepsilon} \frac{\widetilde{N} \widetilde{R} \widetilde{S}}{\Omega^3 R_1^4 (\mathcal{V}_1 \mathcal{V}_2)^3} \ll Q^{2\varepsilon} \frac{GQ^2}{EF} \cdot \frac{1}{\Omega^2 R_1^3 \mathcal{D}^2\mathcal{G}_1 \mathcal{G}_2 \mathcal{K}_1^2\mathcal{K}_2^2 \mathcal{V}_1 \mathcal{V}_2},
\end{align*}
and
$$  \widetilde{C}\widetilde{S}\sqrt{\widetilde{R}} \asymp \widetilde{C} R_1 \Omega \sqrt{\mathcal{V}_1 \mathcal{V}_2 G}. $$
Hence
\begin{align*}
  &\frac{\sqrt{EF}}{\sqrt{GN}} \mathcal{K}_2 \mathcal{D} \mathcal{G}_2 \frac{1}{\widetilde{C}} \Vert a_{\widetilde{m}} \Vert_2 \Vert b_{\widetilde{n}, \widetilde{r}, \widetilde{s}} \Vert_2 \widetilde{C}\widetilde{S}\sqrt{\widetilde{R}} \\
  &\ll Q^{2\varepsilon} \frac{\sqrt{EF}}{\sqrt{GN}} \mathcal{K}_2 \mathcal{D} \mathcal{G}_2 \cdot \sqrt{\frac{N}{\mathcal{G}_1 \mathcal{G}_2}} \cdot \sqrt{\frac{GQ^2}{EF} \cdot \frac{1}{\Omega^2 R_1^3 \mathcal{D}^2\mathcal{G}_1 \mathcal{G}_2 \mathcal{K}_1^2\mathcal{K}_2^2 \mathcal{V}_1 \mathcal{V}_2}} \cdot R_1 \Omega \cdot \sqrt{\mathcal{V}_1 \mathcal{V}_2 G} \\
  &\ll Q^{1+2\varepsilon} \sqrt{G} \cdot \frac{1}{\mathcal{G}_1 R_1^{1/2} \mathcal{K}_1}.
\end{align*}
Furthermore 
\begin{equation}
\label{eq:Xdef}
\widetilde{X} := \frac{\widetilde{C}\widetilde{S}\sqrt{\widetilde{R}}}{4\pi\sqrt{\widetilde{M}\widetilde{N}}} \gg_{\varepsilon} \frac{\frac{Q}{D\mathcal{G}_1 \mathcal{G}_2 \mathcal{K}_1 \mathcal{K}_2 \Omega R_1} \cdot R_1 \Omega \cdot \sqrt{\mathcal{V}_1 \mathcal{V}_2 G}}{\sqrt{\frac{N}{\mathcal{G}_1 \mathcal{G}_2} \cdot \frac{\mathcal{V}_1 \mathcal{V}_2 Q^{2+\varepsilon}}{D^2\mathcal{G}_1 \mathcal{G}_2 \mathcal{K}_1^2\mathcal{K}_2^2 EF}}} \gg_{\varepsilon} Q^{-\varepsilon} \sqrt{\frac{EFG}{N}} \geq 1
\end{equation}
for all $Q$ large enough, provided that $\varepsilon$ is choosen sufficiently small.

By Lemma~\ref{le:Klo1} we obtain that
\begin{equation*}
\mathcal{T} \ll  Q^{1+4\varepsilon} \sqrt{G} \cdot \frac{1}{\mathcal{G}_1 R_1^{1/2} \mathcal{K}_1} \cdot \sqrt{\widetilde{R}\widetilde{S}} \left(1+\sqrt{\frac{\widetilde{M}}{\widetilde{R}\widetilde{S}}}\right) \left(1+\sqrt{\frac{\widetilde{N}}{\widetilde{R}\widetilde{S}}}\right)\left(1+\frac{\widetilde{X}^2}{\left(1+\frac{\widetilde{R}\widetilde{S}}{\widetilde{M}}\right)^2 \left(1+\frac{\widetilde{R}\widetilde{S}}{\widetilde{N}}\right)}\right)^{7/64}.
\end{equation*}
Let us first note that by~\eqref{eq:tildedefs} and Assumptions of Proposition~\ref{prop:d<D}
\[
\frac{\widetilde{N}}{\widetilde{R}\widetilde{S}} \leq \frac{Q^{2+\varepsilon}}{EFG} \leq Q^{\delta_0+\varepsilon} \quad \text{and} \quad \frac{\widetilde{M}}{\widetilde{R}\widetilde{S}} \leq \frac{N}{\Omega G}.
\]
Furthermore, using also~\eqref{eq:Xdef},
\begin{align*}
\frac{\widetilde{X}^2}{\left(1+\frac{\widetilde{R}\widetilde{S}}{\widetilde{M}}\right)^2 \left(1+\frac{\widetilde{R}\widetilde{S}}{\widetilde{N}}\right)}  \ll \frac{1}{1+\left(\frac{\Omega G}{N}\right)^2}\cdot \frac{(\widetilde{C}\widetilde{S})^2 \widetilde{R}}{\widetilde{M} \widetilde{N}} \cdot  \frac{\widetilde{N}}{\widetilde{R}\widetilde{S}}  \ll \frac{N^2}{N^2+(\Omega G)^2} \cdot \frac{Q^2}{N} \ll  \frac{Q^2 N}{N^2+(\Omega G)^2}.
\end{align*}
Hence
\begin{align}
\label{eq:Tbound}
\begin{aligned}
\mathcal{T} &\ll Q^{1+ \delta_0 / 2 + 6\varepsilon} \sqrt{G} (\sqrt{\mathcal{V}_1 \mathcal{V}_2 \Omega G} + \sqrt{\mathcal{V}_1 \mathcal{V}_2 N}) \left(1+ \frac{Q^2 N}{N^2+(\Omega G)^2}\right)^{7/64} \\
&\ll Q^{1+ \delta_0 /2 +6\varepsilon} \sqrt{G} D (\sqrt{\Omega G} + \sqrt{N}) \left(1+ \frac{Q^2 N}{N^2+(\Omega G)^2}\right)^{7/64}.
\end{aligned}
\end{align}

We split into two cases.

\textbf{Case 1: $\Omega G \geq N$.} In this case we obtain the bound
\begin{align*}
\mathcal{T} & \ll Q^{1+ \delta_0 / 2 + 6\varepsilon} \sqrt{G} D \sqrt{\Omega G} \left(1+ \frac{Q^2 N}{(\Omega G)^2}\right)^{7/64} \\ & \ll D Q^{1+ \delta_0 / 2 +6\varepsilon} \left(G \sqrt{\Omega} + Q^{7/32}N^{7/64} G^{1-7/32} \Omega^{1/2-7/32}\right).
\end{align*}
By~\eqref{eq:omegabound}, $\Omega \ll Q^{1+\varepsilon}/E \leq Q^{1+\varepsilon}/G$, and furthermore $N \leq Q^{3+\varepsilon}/(EFG) \leq Q^\varepsilon (Q/G)^3$. Hence
\begin{align*}
\mathcal{T} &\ll D Q^{1+ \delta_0 /2 +7\varepsilon}\left(Q^{1/2}G^{1/2} + Q^{7/32+21/64+1/2-7/32} G^{-21/64+1-7/32-(1/2-7/32)}\right) \\
&= D Q^{1+  \delta_0 / 2 +7\varepsilon}\left(Q^{1/2}G^{1/2} + Q^{53/64} G^{11/64}\right).
\end{align*}
Note that the second term dominates as long as $G \leq Q$. Moreover $G \leq (EFG)^{1/3} \leq Q^{5/6+\delta'/3}$ and $\delta' < 1/2$, thus the second term always dominates, and thus 
\[
\mathcal{T} \ll D Q^{1+373/384+  \delta_0 / 2 +11 \delta'/192+7\varepsilon}.
\]

\textbf{Case 2: $\Omega G < N$.} In this case~\eqref{eq:Tbound} implies
\[
\mathcal{T} \ll Q^{1+  \delta_0 / 2 +6\varepsilon} \sqrt{G} D \sqrt{N} \left(1+ \frac{Q^2}{N}\right)^{7/64}. 
\]
Now $N \leq Q^2$, so 
\[
\mathcal{T} \ll D Q^{1+  \delta_0 / 2 +6\varepsilon} \cdot G^{1/2} Q^{7/32} N^{1/2-7/64}.
\]
Using that $GN \leq Q^{3+\varepsilon}/(EF)$, we see that
\[
\mathcal{T} \ll D Q^{1+  \delta_0 / 2 +7\varepsilon} G^{7/64} Q^{3/2-7/64}/(EF)^{1/2-7/64}.
\]
This is largest when $E$ and $F$ are as small as possible so that $E=F=G=Q^{(2-\delta_0)/3}$, and so
\[
\mathcal{T} \ll D Q^{5/2-7/64+  \delta_0 / 2 + 7\varepsilon} Q^{\frac{2-\delta_0}{3}(21/64-1)} \ll D Q^{1+181/192+ \delta_0/2 + 43 \delta_0 / 192 +7\varepsilon}.
\]
Combining the two cases, we obtain
\[
\mathcal{S}(d\gamma \kappa \leq D) \ll Q^{2+7\varepsilon} \cdot Q^{ \delta_0 / 2 } \cdot \left(D Q^{-11/384+11 \delta'/192} + DQ^{-11/192+43\delta_0/192} \right),
\]
and Proposition~\ref{prop:d<D} follows by adjusting $\varepsilon$.


\section{Acknowledgements}
V.C. acknowledges support from a Simons Travel grant 709707 and NSF grant DMS-2101806. X.L. acknowledges support from Simons Travel Grants 524790 and 962494 and NSF grant DMS-2302672. K.M. was partially supported by Academy of Finland grant no. 285894. M.R. is supported by NSF grant DMS-2401106.


\end{document}